\documentclass[11pt,a4paper,reqno]{amsart}
\usepackage{geometry}
\geometry{margin=1.3in}
\usepackage[T1]{fontenc}

\setlength{\parindent}{0in}
\setlength{\parskip}{0.6em}

\usepackage{cite}
\usepackage{amssymb, amsmath, amsthm, enumerate,  mathtools}
\usepackage{color}
\usepackage[dvipsnames]{xcolor}
\usepackage{graphicx}
\usepackage{tikz}

\usepackage{subcaption} % needed to put subfigures side-by-side
\usepackage{caption}

%%%%%%%%%%%%

%delete once editing is over

%\usepackage{refcheck}

%%%%%%%%%%%%

\usepackage[thinlines]{easytable}

\usepackage{mathabx} % needed for \widecheck (inverse Fourier transform)
\usepackage{esint} % needed for \fint (average integral)
\usepackage{bm}
\usepackage{tikz}
\usepackage{tikz-3dplot}
\usepackage[normalem]{ulem}

\usetikzlibrary{calc,patterns,angles,quotes}

\usepackage{hyperref}
\usepackage{cite}

\usepackage{etoolbox}
\newtheorem{theorem}{Theorem}[section]
\newtheorem{lemma}[theorem]{Lemma}
\newtheorem{proposition}[theorem]{Proposition}
\newtheorem{corollary}[theorem]{Corollary}
\newtheorem{definition}[theorem]{Definition}
\newtheorem{claim}[theorem]{Claim}
\newtheorem{remark}[theorem]{Remark}

\newtheorem{conjecture}[theorem]{Conjecture}

\preto{\section}{}
\preto{\subsection}{}

\makeatletter
\@addtoreset{theorem}{section}
\@addtoreset{theorem}{subsection}
\makeatother

\usepackage{url}

\begin{document}

\title[Some sharp inequalities of Mizohata--Takeuchi-type]{Some sharp inequalities of Mizohata--Takeuchi-type}
\author{Anthony Carbery, Marina Iliopoulou and Hong Wang}
\address{School of Mathematics and Maxwell Institute for Mathematical Sciences\\
University of Edinburgh\\
Edinburgh EH9 3FD\\
Scotland}
\email{A.Carbery@ed.ac.uk}

\address{Department of Mathematics\\
National and Kapodistrian University of Athens\\
Zografou\\
Athens  157 84\\
Greece
}
\email{miliopoulou@math.uoa.gr}

\address{ Courant institute of Mathematical Sciences\\
New York University\\
New York, NY 10012\\
USA}
\email{hw3639@nyu.edu}

\begin{abstract}
    Let $\Sigma$ be a strictly convex, compact patch of a $C^2$ hypersurface in $\mathbb{R}^n$, with non-vanishing Gaussian curvature and surface measure $d\sigma$ induced by the Lebesgue measure in $\mathbb{R}^n$. The Mizohata--Takeuchi conjecture states that
    \begin{equation*}
        \int |\widehat{gd\sigma}|^2w \leq C \|Xw\|_\infty \int |g|^2
    \end{equation*}
    for all $g\in L^2(\Sigma)$ and all weights $w:\mathbb{R}^n\rightarrow [0,+\infty)$, where $X$ denotes the $X$-ray transform. As partial progress towards the conjecture, we show, as a straightforward consequence of recently-established  decoupling inequalities, that 
    for every $\epsilon>0$, there exists a positive constant $C_\epsilon$, which depends only on $\Sigma$ and $\epsilon$, such that for all $R \geq 1$ and all weights $w:\mathbb{R}^n\rightarrow [0,+\infty)$ we have
\begin{equation*}
        \int_{B_R} |\widehat{gd\sigma}|^2w \leq C_\epsilon R^\epsilon \sup_T \left(\int _T w^{\frac{n+1}{2}}\right)^{\frac{2}{n+1}}\int |g|^2,
    \end{equation*}
 where $T$ ranges over the family of all tubes in $\mathbb{R}^n$ of dimensions $R^{1/2} \times \dots \times R^{1/2} \times R$. From this we deduce the Mizohata--Takeuchi conjecture with an $R^{\frac{n-1}{n+1}}$-loss; i.e., that
\begin{equation*}
        \int_{B_R} |\widehat{gd\sigma}|^2w \leq C_\epsilon R^{\frac{n-1}{n+1}+ \epsilon}\|Xw\|_\infty\int |g|^2
    \end{equation*}
    for any ball $B_R$ of radius $R$ and any $\epsilon>0$. The power $(n-1)/(n+1)$ here cannot be replaced by anything smaller unless properties of $\widehat{gd\sigma}$ beyond `decoupling axioms' are exploited. We also provide estimates which improve this inequality under various conditions on the weight, and discuss some new cases where the conjecture holds.  
\end{abstract}

\maketitle

\section{Introduction}

Let $n\geq 2$, and henceforth fix $\Sigma$ to be a strictly convex, compact patch of a $C^2$ hypersurface in $\mathbb{R}^n$ with non-vanishing Gaussian curvature; a prototypical example is the sphere $\mathbb{S}^{n-1}$. Let $d\sigma$ be the surface measure on $\Sigma$, induced by the Lebesgue measure in $\mathbb{R}^n$. The \textit{Fourier extension operator} associated to $\Sigma$ is defined by
\begin{equation*}
    g\mapsto \widehat{gd\sigma}
\end{equation*}
where
\begin{equation*}
    \widehat{gd\sigma}(x):=\int e^{2\pi i \langle x,\xi\rangle} g(\xi)d\sigma(\xi) \text{ for  }x\in\mathbb{R}^n.
\end{equation*}
The Fourier restriction or extension conjecture \cite{St78}, which lies at the heart of harmonic analysis, aims to understand the extension operator by determining its $L^p\rightarrow L^q$ mapping properties. However, while Fourier extension estimates provide information on the size of the level sets of $|\widehat{gd\sigma}|$, they do not reveal much about their shape. The Mizohata--Takeuchi conjecture  aims to shed light in this direction, specifically regarding the clustering of level sets along lines. The conjecture arose in the study of dispersive PDE; see \cite{Mi85} for some background. In that setting, hypersurfaces such as the paraboloid and the cone are particularly relevant. Although the conjecture stated below arose first in the context of  hypersurfaces with non-vanishing Gaussian curvature, it is nevertheless expected that it should hold for arbitrary sufficiently smooth hypersurfaces.

\begin{conjecture}\emph{\textbf{(Mizohata--Takeuchi)}} For any $C^2$ compact convex hypersurface $\Sigma$ in $\mathbb{R}^n$, the inequality
\begin{equation*}
    \int |\widehat{gd\sigma}|^2w \leq C \|Xw\|_\infty \int |g|^2
\end{equation*}
holds for all $g\in L^2(\Sigma)$ and all weights $w:\mathbb{R}^n\rightarrow [0,+\infty)$, for some $C>0$ that only depends on $\Sigma$. 
\end{conjecture}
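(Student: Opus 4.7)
The conjecture is a well-known open problem, so a realistic plan is to aim for partial progress with a quantified loss on balls $B_R$, in the spirit of the estimates announced in the abstract. The guiding approach is to combine a wave packet decomposition of $\widehat{gd\sigma}$ with modern $\ell^2$-decoupling for $\Sigma$, and then try to pass from a tube integral to $\|Xw\|_\infty$.

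The first step is to localise to a ball $B_R$ and perform a Whitney-type decomposition of $\Sigma$ into $\sim R^{(n-1)/2}$ caps $\tau$ of radius $R^{-1/2}$. By the uncertainty principle, each piece $\widehat{g_\tau d\sigma}|_{B_R}$ is essentially a sum of wave packets supported on tubes of dimensions $R^{1/2}\times\cdots\times R^{1/2}\times R$ oriented normally to $\tau$. The second step is to invoke the Bourgain--Demeter $\ell^2$-decoupling theorem at the critical exponent $p=2(n+1)/(n-1)$; combined with H\"older's inequality at the dual exponent $(n+1)/2$ applied to $w$ on each tube, this should produce the tube-sup bound
\begin{equation*}
\int_{B_R}|\widehat{gd\sigma}|^2 w \leq C_\epsilon R^\epsilon \sup_T \Bigl(\int_T w^{(n+1)/2}\Bigr)^{2/(n+1)}\int|g|^2
\end{equation*}
announced in the abstract, which is already a nontrivial form of Mizohata--Takeuchi on $B_R$ with a tube normalisation in place of the $X$-ray norm.

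To descend from this to a statement involving $\|Xw\|_\infty$, I would estimate the tube integral by a straightforward interpolation: since $T$ is an $R^{1/2}$-neighbourhood of a segment of length $R$, one can bound $\int_T w^{(n+1)/2}$ by a power of $R^{(n-1)/2}$ times $\|Xw\|_\infty$ and lower-order factors. Tracking the normalisation and raising to the $2/(n+1)$ power yields the Mizohata--Takeuchi inequality with the $R^{(n-1)/(n+1)+\epsilon}$-loss advertised in the abstract.

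The genuine obstacle is removing this loss altogether to obtain the conjecture in its original form. Any estimate derived purely from $\ell^2$-decoupling is constrained by configurations such as a bush of $R^{(n-1)/2}$ tubes through a common point, where the tube-sup right-hand side is far larger than $\|Xw\|_\infty$; this explains why the exponent $(n-1)/(n+1)$ is the natural barrier for any decoupling-based argument. Passing below this barrier would require exploiting finer features of the extension operator --- transversality, refined Kakeya-type incidence bounds, polynomial partitioning, or small-cap/refined decoupling --- and I expect this to be the hard step in any complete attack, one which, as far as I know, remains open except in the plane and a handful of special cases.
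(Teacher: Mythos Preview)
The statement is a conjecture and the paper does not prove it; the paper's contribution is the partial result with $R^{(n-1)/(n+1)+\epsilon}$ loss (Theorem~1.2). You correctly recognise this and aim for that partial result, and your closing paragraph on the $(n-1)/(n+1)$ barrier is accurate and aligns with the paper's Section~7 discussion of Guth's construction.

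Your sketch of the tube-sup inequality, however, has a real gap. Plain Bourgain--Demeter $\ell^2$-decoupling at $p=2(n+1)/(n-1)$ followed by H\"older yields only
\[
\int_{B_R}|\widehat{gd\sigma}|^2\, w \;\lesssim\; \Bigl(\int_{B_R} w^{(n+1)/2}\Bigr)^{2/(n+1)}\int|g|^2,
\]
which is nothing more than Stein--Tomas plus H\"older; the paper explicitly flags this as the baseline that Theorem~1.2 improves upon. The phrase ``H\"older on each tube'' does not by itself explain how the integral over $B_R$ collapses to a supremum over single tubes: decoupling controls an unweighted $L^p$ norm on $B_R$, and there is no mechanism in the standard statement that singles out one tube.

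The paper's device for this step is the \emph{refined} decoupling theorem of Guth--Iosevich--Ou--Wang, which carries an extra gain $(k/\#\mathbb{T})^{1/(n+1)}$ on the $k$-rich set $U_k$ (the union of $R^{1/2}$-balls meeting $\sim k$ wave-packet tubes). After a pigeonholing to a dominant $k$, H\"older on $U_k$ contributes $w^{(n+1)/2}(U_k)^{2/(n+1)}$, and the double-counting identity
\[
k\,w^{(n+1)/2}(U_k)\;\sim\;\sum_{B\in\mathcal{B}_k}\sum_{T\ni B} w^{(n+1)/2}(B)\;=\;\sum_{T}\sum_{B\subset T,\,B\in\mathcal{B}_k} w^{(n+1)/2}(B)
\]
turns the refined gain into the tube-sup $\sup_T\int_T w^{(n+1)/2}$. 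This is precisely the missing idea in your outline; you list refined decoupling only as a tool for going \emph{below} the barrier, whereas in fact it is already needed to reach it. Once the tube-sup bound is in hand, your passage to $\|Xw\|_\infty$ with the $R^{(n-1)/(n+1)}$ loss matches the paper's argument.
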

Here, $X$ denotes the $X$-ray transform, so that
\begin{equation*}
    \|Xw\|_\infty=\sup_{\ell}\int_\ell w,
\end{equation*}
where the supremum is taken over all lines $\ell$ in $\mathbb{R}^n$. By the compactness of $\Sigma$ and uncertainty principle considerations, the Mizohata--Takeuchi conjecture is equivalent to 
\begin{equation*}
  \int |\widehat{gd\sigma}|^2w \leq C \sup_{T} w(T) \int |g|^2
\end{equation*}
where the supremum is taken over all $1$-neighbourhoods $T$ of doubly-infinite lines in $\mathbb{R}^n$. In particular we may -- and indeed we shall -- assume that $w$ is roughly constant at scale $1$.

The Mizohata--Takeuchi conjecture is open in all dimensions, including $n=2$ (where the Fourier extension conjecture has been resolved).\footnote{It is a nice observation of Bennett and Nakamura~\cite[p.129]{BN21} that when $n=2$, the Mizohata--Takeuchi conjecture implies the Fourier extension conjecture.}. It would directly follow from the truth of the stronger conjecture
\begin{equation}\label{eq: Stein's conjecture}
    \int |\widehat{gd\sigma}|^2 w \leq  \; \; C \int  |g(\xi)|^2\sup_{\ell\parallel N(\xi)}Xw(\ell)\; d\sigma(\xi),
\end{equation}
a formulation of which in the related context of the disc multipliers is due to Stein \cite{St78}; here, $N(\xi)$ denotes the normal to $\Sigma$ at $\xi$.

When $\Sigma=\mathbb{S}^{n-1}$ and the weight is radial, the Mizohata--Takeuchi conjecture is known to hold (see \cite{CRS92, BRV97, CS97a, CS97b, CSV07}), and the Stein-like conjecture in the same setting is a trivial consequence of this. When the weight is constant on parallel hyperplanes and the hypersurface is arbitrary, both conjectures are true. This can be seen by using an affine change of variables to reduce to the case of horizontal hyperplanes and a hypersurface parametrised as $(t, \gamma(t))$ for $t \in \mathbb{R}^{n-1}$, and in this case Plancherel's theorem in $\mathbb{R}^{n-1}$ gives the result directly.  When $\Sigma=\mathbb{S}^1$ and the weight is a measure supported on $\mathbb{S}^1$, both conjectures are also known \cite{BCSV06}. Little is known beyond these three cases.

One way to measure partial progress on the Mizohata--Takeuchi conjecture is to consider inequalities of the form 
\begin{equation*}
    \int_{B_R} |\widehat{gd\sigma}|^2w \leq C R^{\alpha}\|Xw\|_\infty \int |g|^2
\end{equation*}
where $B_R$ is the ball of radius $R$ centred at $0$, and to attempt to establish such inequalities with the exponent $\alpha$ as small as possible. By the Agmon--H\"ormander trace inequality and the local constancy of $w$ at scale $1$ we have 
\begin{equation}\label{eq: easy 2}
    \int_{B_R} |\widehat{gd\sigma}|^2w \leq C R \|w\|_\infty \int |g|^2  \leq C R\|Xw\|_\infty \int |g|^2 
\end{equation}
in all dimensions $n \geq 2$, and it is known that 
\begin{equation}\label{eq: easy 1}
    \int_{B_R} |\widehat{gd\sigma}|^2w \leq C R^{1/2}\|Xw\|_\infty \int |g|^2\;\;\text{ for }n=2.
\end{equation}
The latter inequality can be traced back to works of Bourgain \cite{B94}, Erdo\~gan \cite{E04} and also Carbery and Seeger \cite{CS00} -- see \cite[Section 4]{BBC08} for further details of inequalities which can be found in the literature and which have \eqref{eq: easy 1} as a consequence. We give a more direct proof of this in Section \ref{section: trivial estimates} below. In more recent developments, it is a consequence of the main result in 
Du and Zhang \cite{DZ19} that one may take any $\alpha > (n-1)/n$ (in fact, with the significantly smaller functional $\sup_{x, \, 1 \leq r \leq R} w(B(x,r))/r^{n-1}$ in place of $\|Xw\|_\infty$) for arbitrary $n$.
(See also Shayya \cite{Sh21} and Du et al \cite{DGOWWZ21}, who gave alternative arguments when $n=3$ for $\alpha > 6/7$ and $\alpha > 2/3$ respectively.) In Theorem~\ref{theorem: all weights} below we show that one may take any $\alpha > (n-1)/(n+1)$ in all dimensions. 

See also \cite{BN21, BNS22} for a tomographic approach to the Mizohata--Takeuchi conjecture, \cite{Sh22} for related weighted $L^2\rightarrow L^4$ estimates on the extension operator, and \cite{GWZ22} for variants of the conjecture when the supports of $g$ and $w$ are respectively contained in and equal to neighbourhoods of algebraic varieties.

\textit{\textbf{Notation.}} The control we shall obtain on $\int_{B_R}|\widehat{gd\sigma}|^2w$ will be  accompanied by multiplicative losses of the form $C_\epsilon R^\epsilon$ for any $\epsilon >0$. In order to facilitate expression of this, we adopt the following notation.

For any non-negative quantities $A$ and $B$ (which may depend on $R$), $A\lesssim B$ means that $A\leq c B$ for some constant $c$ that depends only on $\Sigma$ and the ambient dimension. Likewise, $A\gtrsim B$ means that $B\lesssim A$, while $A\sim B$ means that $A\lesssim B$ and $A\gtrsim B$. With $R \geq 1$ fixed, $A\lessapprox B$ means that, for every $\epsilon>0$, there exists a constant $C_\epsilon$, depending only on $\epsilon$, $\Sigma$ and the ambient dimension, such that $A\leq C_\epsilon R^\epsilon B$. Similarly, $A\gtrapprox B$ means that $B\lessapprox A$, while $A\approx B$ means that $A\lessapprox B$ and $A\gtrapprox B$.

For a weight $w$ on $\mathbb{R}^n$ and $A\subset\mathbb{R}^n$, we denote by $w(A)$ the integral $\int_A w$ with respect to Lebesgue measure on $\mathbb{R}^n$.

For $n\geq 2$, an $n$-dimensional ball of radius $r$ will be referred to as an $r$\textit{-ball}. A tube of length $r$ and cross section an $(n-1)$-dimensional ball of radius $r^{1/2}$ will be referred to as an $r^{1/2}$\textit{-tube}. With $R \geq 1$ fixed and $1 \leq r \leq R$, we let $\mathbb{T}_r$ be the set of $r^{1/2}$-tubes intersecting $B_R$.

For a line $\ell$ in $\mathbb{R}^n$ and $g\in L^2(\Sigma)$, we write $\ell\perp {\rm supp}\;g$ if the direction of $\ell$ is parallel to one of the normals to ${\rm supp}\;g \subset \Sigma$.

For a tube $T$ in $\mathbb{R}^n$, we write $T\perp {\rm supp}\;g$ if the central line of $T$ is parallel to one of the normals to ${\rm supp}\;g \subset \Sigma$. \hfill $\blacksquare$

\textbf{Statement of results.} In this paper, we present several $L^2$-weighted inequalities for the Fourier extension operator which are related to the Mizohata--Takeuchi conjecture. To place our results in context, we first observe that the Stein--Tomas inequality 
\begin{equation*}
    \|\widehat{gd\sigma}\|_{L^\frac{2(n+1)}{n-1}(\mathbb{R}^n)}\lesssim \|g\|_2
\end{equation*}
together with H\"older's inequality implies that
\begin{equation*}
\int_{B_R} |\widehat{gd\sigma}|^2w \lesssim \left(\int_{B_R}w^{\frac{n+1}{2}}\right)^{\frac{2}{n+1}} \int |g|^2
\end{equation*}
for all $g$ and all non-negative $w$. The first Mizohata--Takeuchi-type estimates that we present give a significant improvement over this inequality, and follow from the refined Stein--Tomas-type estimate in \cite{GIOW20}. They are given in Theorem \ref{theorem: all weights} below. The main inequality of this result, \eqref{eq: mainineq}, is closely related to, but logically independent from, the Mizohata--Takeuchi conjecture, and it is sharp in the sense we discuss below the statement. Its consequence \eqref{eq: MTmain} is also sharp given the techniques that we employ; see \cite{Gu22}, the remarks at the end of this section and Section~\ref{section: Guth's example}. Estimates which improve on Theorem \ref{theorem: all weights} appear in Lemma \ref{lemma: small caps} (for $g$ with small support), as well as in Theorems \ref{theorem: hor slabs} and \ref{theorem: all slabs} (for weights that are constant on slabs), and arise as consequences of Theorem \ref{theorem: all weights}.

\begin{theorem}\label{theorem: all weights} Let $n\geq 2$. For every $\epsilon>0$, there exists a positive constant $C_\epsilon$, which depends only on $\Sigma$ and $\epsilon$, such that
\begin{eqnarray}\label{eq: mainineq}
\int_{B_R} |\widehat{gd\sigma}|^2w  \leq C_\epsilon R^\epsilon \sup_{T\in\mathbb{T}_R:\;T\perp {\rm supp}\;g} \left(\int _T w^{\frac{n+1}{2}}\right)^{\frac{2}{n+1}}\int |g|^2,
\end{eqnarray}
and in particular 
\begin{align}\label{eq: MTmain}
    \int_{B_R} |\widehat{gd\sigma}|^2 w 
    \lessapprox R^{\frac{n-1}{n+1}}\left(\sup_{\ell\perp {\rm supp}\;g}Xw(\ell)\right)\int |g|^2
\end{align}
for all $R\geq 1$, $g\in L^2(\Sigma)$ and weights $w:\mathbb{R}^n\rightarrow [0,+\infty)$.
\end{theorem}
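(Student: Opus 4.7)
The plan is to combine H\"older's inequality on $R^{1/2}$-tubes with the refined Stein--Tomas inequality of Guth--Iosevich--Ou--Wang \cite{GIOW20}, which is the key input coming from $\ell^2$-decoupling for $\Sigma$. The inequality \eqref{eq: MTmain} will then follow from \eqref{eq: mainineq} by a direct computation that converts the $L^{(n+1)/2}$-integral of $w$ on a tube into $\|Xw\|_\infty$, at the cost of the claimed $R^{(n-1)/(n+1)}$ factor.

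For \eqref{eq: mainineq}, I would first cover $B_R$ by a family $\mathcal{T}\subset\mathbb{T}_R$ of $R^{1/2}$-tubes with bounded overlap whose directions are normal to some point of $\mathrm{supp}\,g$; this can be done, for instance, by tiling $B_R$ with translates of a single tube in one such direction (each then automatically satisfies $T\perp\mathrm{supp}\,g$). On each $T\in\mathcal{T}$, H\"older's inequality with conjugate exponents $\frac{n+1}{n-1}$ and $\frac{n+1}{2}$ yields
\begin{equation*}
\int_T |\widehat{gd\sigma}|^2 w \leq \|\widehat{gd\sigma}\|_{L^{2(n+1)/(n-1)}(T)}^{2} \left(\int_T w^{(n+1)/2}\right)^{2/(n+1)}.
\end{equation*}
Summing over $T\in\mathcal{T}$ and pulling out the supremum of the weight factor gives
\begin{equation*}
\int_{B_R} |\widehat{gd\sigma}|^2 w \lesssim \sup_{T\in\mathcal{T}}\left(\int_T w^{(n+1)/2}\right)^{2/(n+1)} \sum_{T\in\mathcal{T}} \|\widehat{gd\sigma}\|_{L^{2(n+1)/(n-1)}(T)}^{2},
\end{equation*}
so \eqref{eq: mainineq} reduces to the refined Stein--Tomas bound $\sum_{T\in\mathcal{T}} \|\widehat{gd\sigma}\|_{L^{2(n+1)/(n-1)}(T)}^2 \lessapprox \|g\|_2^2$. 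I expect this to be the main obstacle: the plain Stein--Tomas inequality only supplies $\ell^{2(n+1)/(n-1)}$-summability over the tubes, and passing to $\ell^2$ by Cauchy--Schwarz on the $\sim R^{(n-1)/2}$ tubes in $\mathcal{T}$ would cost an unacceptable factor of $R^{(n-1)/(n+1)}$; it is precisely the wavepacket-based decoupling refinement of \cite{GIOW20} that removes this loss.

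To deduce \eqref{eq: MTmain} from \eqref{eq: mainineq}, I would use that $w$ may be assumed locally constant at scale $1$ (so $\|w\|_\infty \lesssim \|Xw\|_\infty$) and that a tube $T\in\mathbb{T}_R$ has $(n-1)$-dimensional cross-section of volume $\sim R^{(n-1)/2}$; Fubini along $T$ then gives $\int_T w \lesssim R^{(n-1)/2} \|Xw\|_\infty$. Hence
\begin{equation*}
\int_T w^{(n+1)/2} \leq \|w\|_\infty^{(n-1)/2} \int_T w \lesssim R^{(n-1)/2} \|Xw\|_\infty^{(n+1)/2},
\end{equation*}
which raised to the power $2/(n+1)$ is at most $R^{(n-1)/(n+1)} \|Xw\|_\infty$, and \eqref{eq: MTmain} follows by substitution into \eqref{eq: mainineq}.
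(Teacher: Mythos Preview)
Your reduction contains a genuine gap: the inequality you are hoping for,
\[
\sum_{T\in\mathcal{T}} \|\widehat{gd\sigma}\|_{L^{2(n+1)/(n-1)}(T)}^{2} \lessapprox \|g\|_2^2
\]
with $\mathcal{T}$ a tiling of $B_R$ by \emph{parallel} $R^{1/2}$-tubes, is simply false. Take $n=2$ and $g=\chi_{\theta_1}+\chi_{\theta_2}$ for two $R^{-1/2}$-caps whose normals $e_1,e_2$ make angle $\sim 1$, and let $\mathcal{T}$ consist of tubes in direction $e_1$. The dual tube $T_{\theta_2}$ meets $\sim R^{1/2}$ members $T\in\mathcal{T}$ transversally, each intersection having area $\sim R$; on each such $T$ one has $\|\widehat{gd\sigma}\|_{L^6(T)}^2\sim (R^{-3}\cdot R)^{1/3}=R^{-2/3}$, so the sum is $\sim R^{-1/6}$, whereas $\|g\|_2^2\sim R^{-1/2}$. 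The refined decoupling of \cite{GIOW20} concerns $R^{1/2}$-\emph{balls} stratified by wave-packet multiplicity, not parallel physical-space tubes, and cannot yield your inequality. Equivalently, by pulling the supremum out over the single-direction family $\mathcal{T}\subsetneq\{T\in\mathbb{T}_R:T\perp\mathrm{supp}\,g\}$ you are attempting something strictly stronger than \eqref{eq: mainineq}; testing the same example on $w=\chi_{T_{\theta_2}}$ shows that this stronger statement fails by a factor $R^{1/3}$.

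The paper's route is different in an essential way. After a wave-packet decomposition and pigeonholing so that the $\|g_T\|_2$ are comparable, one covers $B_R$ by $R^{1/2}$-balls, restricts attention to the $k$-rich set $U_k$, applies H\"older there, and invokes the refined Stein--Tomas estimate of \cite{GIOW20} to gain the factor $(k/\#\mathbb{T})^{2/(n+1)}$. A double-counting identity then rewrites $k\,\int_{U_k}w^{(n+1)/2}$ as $\sum_{T\in\mathbb{T}}\sum_{B\cap T\neq\emptyset}\int_B w^{(n+1)/2}$, after which one pulls out the supremum over the \emph{wave-packet} tubes $T$ (hence over all directions normal to $\mathrm{supp}\,g$), and the remaining $\#\mathbb{T}$ cancels. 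It is exactly this interplay between the multiplicity $k$ and the summation over wave-packet directions that your single-direction covering loses. Your deduction of \eqref{eq: MTmain} from \eqref{eq: mainineq}, on the other hand, is correct and matches the paper.
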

The second statement follows from the first upon noting that 
\begin{eqnarray*}
\begin{aligned}
\sup_{T\in\mathbb{T}_R:\;T\perp {\rm supp}\;g} \left(\int _T w^{\frac{n+1}{2}}\right)^{\frac{2}{n+1}} 
        & \leq  \|w\|_\infty^{\frac{n-1}{n+1}}\left(\sup_{T\in\mathbb{T}_R:\;T\perp {\rm supp}\;g} w(T)\right)^{\frac{2}{n+1}} \\
        &\lesssim R^{\frac{n-1}{n+1}}\;\|w\|_\infty^{\frac{n-1}{n+1}}\;\left(\sup_{\ell\perp {\rm supp}\;g}Xw(\ell)\right)^{\frac{2}{n+1}}\\
\end{aligned}
\end{eqnarray*}
and using the approximate constancy of $w$ at scale $1$.

Notice that Theorem~\ref{theorem: all weights}, unlike the Mizohata--Takeuchi conjecture itself, requires non-vanishing curvature of $\Sigma$.

\begin{remark}{\rm Inequality \eqref{eq: mainineq} of Theorem~\ref{theorem: all weights} is sharp in the following senses. Firstly, if the exponent $r$ is such that 
\begin{equation*}
\int_{B_R} |\widehat{gd\sigma}|^qw  \lesssim \left(\int_{B_R}w^{r}\right)^{1/r}\left(\int |g|^p\right)^{q/p}
\end{equation*}
(which, by duality, is equivalent to an $L^p-L^{qr'}$ Fourier extension estimate) holds, then necessarily $ 1/qr' \leq (n-1)/(n+1)p'$; %since truth of this with $q > (n+1)/2$ would imply (via duality) a false $L^2$ restriction theorem. 
so the exponent $(n+1)/2$ appearing in \eqref{eq: mainineq} %Theorem~\ref{theorem: all weights} 
(in which $p=q=2$) cannot be increased, irrespective of the size of the tubes $T \subset B_R$. Secondly, fixing $r = (n+1)/2$ in \eqref{eq: mainineq}, we cannot reduce the width of the tubes appearing to be significantly smaller than $R^{1/2}$. These two assertions can both be seen by testing as usual on $g$ the indicator function of an $R^{-1/2}$-cap and $w$ the indicator of the dual $R^{1/2}$-tube. Moreover, we may not take $\epsilon = 0$ in \eqref{eq: mainineq}, and it is likely that
when $n=2$, we may be able to replace the $R^\epsilon$ term by a power
of $\log R$; see Remark~\ref{remark: epsilon} below.}

%we see that we cannot reduce the width of the tubes appearing to be significantly smaller than $R^{1/2}$.
%{\color{blue} On the other hand, if $q < (n+1)/2$, one might expect to be able to feature tubes of smaller width $\sim R^{\frac{q-1}{n-1}}$, and indeed the Mizohata--Takeuchi conjecture predicts precisely this in the case $q = 1$.  (Perhaps this is nonsense for intermediate $q$?)}   
    
\end{remark}

Theorem \ref{theorem: all weights} will follow from the more precise Theorem \ref{theorem: all weights refined}, in which $\mathbb{T}_R$ is replaced by the set of tubes featuring in the wave packet decomposition of $g$ at scale $R$.

We now turn to our other results. Theorems \ref{theorem: hor slabs} and \ref{theorem: all slabs} below are improvements of Theorem \ref{theorem: all weights} for weights that exhibit a level of local constancy along slabs. In the extreme case where there is no such local constancy beyond on unit scale, both theorems reduce to Theorem \ref{theorem: all weights}. Theorem \ref{theorem: hor slabs} involves slabs that are `roughly parallel' to caps of $\Sigma$, while Theorem \ref{theorem: all slabs} addresses the general case. 

Both theorems (and, in fact, the more precise Theorems \ref{theorem: hor slabs refined} and \ref{theorem: all slabs refined}) will follow from a strengthened version of Theorem \ref{theorem: all weights} for functions $g$ with small support (Lemma \ref{lemma: small caps} below) which we will prove for all weights. 

In order to state Theorems~\ref{theorem: hor slabs} and \ref{theorem: all slabs}, we first establish some further notation, and introduce a quantity which is intermediate between the quantity 
$$\sup_{T\in\mathbb{T}_R:\;T\perp {\rm supp}\;g} \left(\int _T w^{\frac{n+1}{2}}\right)^{\frac{2}{n+1}}$$ 
occuring in Theorem~\ref{theorem: all weights} and a quantity more directly geared towards that occuring in the Mizohata--Takeuchi conjecture itself. This will involve considering an amalgam of `running averages' of $w$ at certain scales related to the level of constancy that we are assuming, which is measured by a parameter $1 \leq \rho \leq R$ which we now fix. Let $E \subset \Sigma$. For each $T_R \in \mathbb{T}_R$ such that $T_R \perp E$, we cover $T_R$ by essentially disjoint tubes $S_\rho \in \mathbb{T}_\rho$ which are parallel to and contained in $T_R$. For $w:\mathbb{R}^n\rightarrow [0,+\infty)$ and $E \subset \Sigma$ we define 
\begin{equation*}
    A_{\rho,R,E}(w):= \frac{1}{\rho^{\frac{n-1}{2}}} \sup_{T_R\in \mathbb{T}_R:\; T_R\perp E} \left(\sum_{S_\rho \subset T_R} w(S_\rho)^{\frac{n+1}{2}}\right)^{\frac{2}{n+1}},
    \end{equation*}
a quantity which can be expressed more geometrically as 
\begin{equation*}
   \sup_{T_R\in \mathbb{T}_R:\; T_R\perp E} \left(\sum_{S_\rho \subset T_R} \left(\frac{w(S_\rho)}{|S_\rho|}\right)^{\frac{n+1}{2}}|S_\rho|\right)^{\frac{2}{n+1}}
    \end{equation*}
and thus is seen to increase as $\rho$ gets smaller.\footnote{By H\"older's inequality we have, for $\lambda \geq 1$ and a tessellation of an $S_{\lambda \rho}$ by $S_\rho$'s,
\begin{equation*}
\left(\frac{w(S_{\lambda\rho})}{|S_{\lambda\rho}|}\right)^{\frac{n+1}{2}}|S_{\lambda\rho}| \lesssim \sum_{S_\rho \subset S_{\lambda \rho}} \left(\frac{w(S_{\rho})}{|S_{\rho}|}\right)^{\frac{n+1}{2}}|S_{\rho}|.
\end{equation*}} For $\rho=1$, 
\begin{equation*}
    A_{1,R,E}(w)\sim\sup_{T_R\in \mathbb{T}_R:\; T_R\perp E} \left(\int_{T_R} w^{\frac{n+1}{2}}\right)^{\frac{2}{n+1}}
\end{equation*}
is the quantity appearing on the right-hand side of Theorem \ref{theorem: all weights}, controlling the $L^2(E)\rightarrow L^2(w)$-norm of the extension operator. Theorem \ref{theorem: all weights} fails in general for $g$ supported on $E$ if the above quantity is replaced by the smaller
\begin{equation*}
    A_{R,R,E}(w)=\sup_{T_R\in \mathbb{T}_R:\; T_R\perp E}\; \frac{w(T_R)}{R^{\frac{n-1}{2}}}
\end{equation*}
(and in fact by $A_{\rho,R,E}(w)$ for any $\rho \gg1$, as can be seen by taking $g$ to be the indicator function of a $1$-cap and $w$ the indicator function of the unit ball). In the results which follow, however, we shall show that under certain auxiliary conditions ($g$ being supported on a small cap, or the weight being the indicator function of a union of small slabs), Theorem \ref{theorem: all weights} nevertheless does hold for $g\in L^2(E)$ if we replace the quantity $A_{1,R,E}(w)$ with $A_{\rho,R,E}(w) $ for an appropriate choice of $\rho$. To further compare these two quantities, observe that
\begin{equation}\label{eq: comparing}
    A_{\rho,R,E}(w) \leq \sup_{S_\rho\in\mathbb{T}_\rho:\;S_\rho\perp E}\left(\frac{w(S_\rho)}{|S_\rho|}\right)^{\frac{n-1}{n+1}}\sup_{T_R \in \mathbb{T}_R, T_R \perp E} w(T_R)^{\frac{2}{n+1}},
\end{equation}
which becomes
\begin{equation*}
    A_{\rho,R,E}(w) \leq \sup_{S_\rho\in\mathbb{T}_\rho:\;S_\rho\perp E}\left(\frac{w(S_\rho)}{|S_\rho|}\right)^{\frac{n-1}{n+1}}\; A_{1,R,E}(w)
\end{equation*}
when $w$ is an indicator function (which we may well assume for our purposes).

In situations in which we are able to  bound the $L^2(E)\rightarrow L^2(w)$-norm of the extension operator by $A_{\rho,R,E}(w)$, inequality \eqref{eq: comparing} leads to improved bounds in terms of $\|Xw\|_\infty$; in particular, to a gain on Theorem \ref{theorem: all weights} by a factor $\rho^{-\frac{n-1}{n+1}}$. Indeed, by \eqref{eq: comparing},
\begin{equation*}
A_{\rho,R,E}(w) \leq \left(\frac{\|Xw\|_\infty}{\rho}\right)^{\frac{n-1}{n+1}} (R^{\frac{n-1}{2}}\|Xw\|_\infty)^{\frac{2}{n+1}}\lesssim \left(\frac{R}{\rho}\right)^{\frac{n-1}{n+1}}\sup_{\ell\perp E} Xw(\ell).
\end{equation*}
A situation such as this arises when $g$ is supported in a $\rho^{-1/2}$-cap of $\Sigma$ (that is, the intersection of $\Sigma$ with a $\rho^{-1/2}$-ball), and is summarised in Lemma \ref{lemma: small caps} below. The lemma will in turn be used in conjunction with a decoupling argument to derive Theorems~\ref{theorem: hor slabs} and \ref{theorem: all slabs} for all functions $g$ and restricted classes of weights. Note that, in Lemma \ref{lemma: small caps} below, the subscript $\tau$ on $g_\tau$ is not strictly needed, but we retain it to emphasise its support.
\begin{lemma}\textbf{\emph{(Small caps)}}\label{lemma: small caps} For every $\epsilon>0$, there exists $C_\epsilon>0$ such that
for all weights $w:\mathbb{R}^n\rightarrow [0,+\infty)$, whenever $1\leq\rho\leq R$, $\tau$ is a $\rho^{-1/2}$-cap of $\Sigma$ and $g_\tau\in L^2(B^{n-1})$ is supported in $\tau$, we have
\begin{equation*}
    \int_{B_R}|\widehat{g_\tau d\sigma}|^2 w \leq C_\epsilon R^\epsilon\;  A_{\rho,R, \;{\rm supp}\;g_\tau}(w) \int |g_\tau|^2,
\end{equation*}
and therefore also
\begin{equation}\label{eq:MTsmallcaps}
    \int_{B_R}|\widehat{g_\tau d\sigma}|^2 w \lessapprox\; \left(\frac{R}{\rho}\right)^{\frac{n-1}{n+1}}\sup_{\ell\perp {\rm supp}\;g_\tau} Xw(\ell)  \int |g_\tau|^2.
\end{equation}
\end{lemma}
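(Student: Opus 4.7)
The plan is to reduce the estimate to Theorem \ref{theorem: all weights} applied to a suitably averaged version $\bar w$ of the weight, exploiting the local constancy of $|\widehat{g_\tau d\sigma}|^2$ at the scale of the $\rho^{1/2}$-tubes $S_\rho$. The starting observation is that because $g_\tau$ is supported in a single $\rho^{-1/2}$-cap $\tau$ of $\Sigma$, the distribution $|\widehat{g_\tau d\sigma}|^2$ has Fourier support in the difference set $\tau - \tau$, which, by the non-vanishing curvature of $\Sigma$, is contained in a plate centred at the origin of tangential width $\sim\rho^{-1/2}$ and normal thickness $\sim\rho^{-1}$; this plate is precisely the Fourier dual of $S_\rho$, the tube of dimensions $\rho^{1/2}\times\cdots\times\rho^{1/2}\times\rho$ with long axis along the central normal to $\tau$.

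Concretely, I would choose a nonnegative even Schwartz bump $\eta$ with $\int\eta=1$, essentially concentrated on $S_\rho$, and with $\widehat{\eta}\equiv 1$ on a small neighbourhood of $\tau-\tau$, so that $|\widehat{g_\tau d\sigma}|^2 = \eta\ast|\widehat{g_\tau d\sigma}|^2$ pointwise. Moving the convolution onto the weight via Fubini then gives
\begin{equation*}
\int_{B_R} |\widehat{g_\tau d\sigma}|^2\,w \;\leq\; \int |\widehat{g_\tau d\sigma}|^2\,(\eta\ast w) \;=:\; \int |\widehat{g_\tau d\sigma}|^2\,\bar w,
\end{equation*}
where $\bar w := \eta\ast w$ is a running $S_\rho$-average of $w$, essentially equal to $w(S_\rho)/|S_\rho|$ on each tile of a translation tiling of $\mathbb{R}^n$ by $S_\rho$.

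Next I would apply Theorem \ref{theorem: all weights} with $\bar w$ in place of $w$, obtaining
\begin{equation*}
\int_{B_R} |\widehat{g_\tau d\sigma}|^2\,\bar w \;\lessapprox\; \sup_{T_R\in\mathbb{T}_R:\,T_R\perp\tau} \left(\int_{T_R}\bar w^{(n+1)/2}\right)^{\!2/(n+1)} \int |g_\tau|^2.
\end{equation*}
For each $T_R \perp \tau$, the $\rho^{1/2}$-subtubes $S_\rho \subset T_R$ parallel to $T_R$'s axis tessellate $T_R$; since the cone of allowed directions for $T_R\perp\tau$ has angular opening $\rho^{-1/2}$, matching the angular resolution of $S_\rho$, a single choice of orientation for $\eta$ works uniformly for all such $T_R$. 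On each $S_\rho \subset T_R$ the value of $\bar w$ is $\approx w(S_\rho)/|S_\rho|$, and since $|S_\rho|=\rho^{(n+1)/2}$ a direct computation gives
\begin{equation*}
\left(\int_{T_R}\bar w^{(n+1)/2}\right)^{\!2/(n+1)} \!\approx\; \rho^{-(n-1)/2}\left(\sum_{S_\rho\subset T_R}w(S_\rho)^{(n+1)/2}\right)^{\!2/(n+1)}.
\end{equation*}
Taking the supremum over $T_R \perp \tau$ identifies the right-hand side as $A_{\rho,R,\,{\rm supp}\,g_\tau}(w)$, yielding the first inequality of the lemma.

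The second inequality \eqref{eq:MTsmallcaps} is then immediate upon combining this with the bound on $A_{\rho,R,\,{\rm supp}\,g_\tau}(w)$ derived just below \eqref{eq: comparing} in the text. The main technical obstacle will lie in the local constancy step: $\eta$ has Schwartz tails rather than compact support, and convolution with $\eta$ slightly blurs tube orientations relative to the fixed tiling. Both issues are standard; I expect to handle them via a dyadic decomposition of the tails or by choosing $\eta$ adapted to the anisotropy of $\tau - \tau$, absorbing any resulting loss into the $R^\epsilon$ factor already present in Theorem \ref{theorem: all weights}.
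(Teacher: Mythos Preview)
Your strategy is correct and coincides with the paper's: exploit the local constancy of $|\widehat{g_\tau d\sigma}|^2$ on $\rho^{1/2}$-tubes normal to $\tau$ to replace $w$ by a tube-averaged version, then invoke Theorem~\ref{theorem: all weights} on the averaged weight and read off $A_{\rho,R,\,{\rm supp}\,g_\tau}(w)$. The paper implements the averaging step differently: it first reduces by dyadic pigeonholing to the case where $w$ is an indicator function and $w(S_\rho)/|S_\rho|\sim\lambda$ is essentially constant over the relevant $S_\rho$, then uses Proposition~\ref{prop: uncertainty principle} to bound $\int_{S_\rho}|Eg_\tau|^2 w \lesssim \lambda\int_{2\widetilde S_\rho}|Eg_\tau|^2$, and finally applies Theorem~\ref{theorem: all weights} to the constant-density weight $\widetilde w=\sum_{S_\rho}\chi_{2\widetilde S_\rho}$. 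Your Fourier-side convolution is more direct and bypasses both pigeonholing reductions, at the price of carrying Schwartz tails through the computation (which, as you note, is routine and absorbed into the $R^\epsilon$).

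One small slip worth flagging: a nonnegative Schwartz $\eta$ with $\widehat\eta\equiv 1$ on an open neighbourhood of the origin does not exist (taking real parts in $\int\eta(x)(1-e^{-2\pi i x\cdot\xi})\,dx=0$ forces $\eta$ to be a point mass). The standard remedy is to choose a bump $\phi$ with $\phi\equiv 1$ on the plate containing $\tau-\tau$, set $\eta:=|\check\phi|$, and use the pointwise inequality $|\widehat{g_\tau d\sigma}|^2 = \check\phi\ast|\widehat{g_\tau d\sigma}|^2 \le \eta\ast|\widehat{g_\tau d\sigma}|^2$ in place of the identity; the rest of your argument then goes through unchanged.
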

In order to state Theorems~\ref{theorem: hor slabs} and \ref{theorem: all slabs}, we need to make precise what we mean by a slab, and by 
a slab being `roughly parallel' to caps of $\Sigma$.

\begin{definition}
{\rm Fix $R\geq 1$, $1\leq\rho\leq R$ and $0\leq \nu\leq \pi/2$. We define a $\rho^{1/2}$\textit{-slab} to be any affine copy of the 1-neighbourhood of an $(n-1)$-dimensional $\rho^{1/2}$-ball in $\mathbb{R}^n$. We say that a slab is $\nu$\textit{-parallel to} $\Sigma$ if all normals to $\Sigma$ create angle at least $\nu$ with the slab (that is, they create angle at most $\frac{\pi}{2}-\nu$ with the normal to the slab).}
\end{definition}

In this definition, $\nu$ is a measure of how large the angles are between the slab and the normals to $\Sigma$. The larger $\nu$ is, the larger these angles are, and the more `parallel' $\Sigma$ and the slab look.

With these preliminaries in hand, we are now ready to state our remaining results. In the first two results which follow, the implicit constant blows up as $\nu \downarrow 0$. Thus, the interesting cases of these two results are those in which $\nu$ is large, i.e. when the slabs create large angles with the normals to $\Sigma$. If for instance $\Sigma$ is roughly horizontal (i.e. all normals to $\Sigma$ are within angle $\leq 1/100$ from the vertical direction), then Theorem \ref{theorem: hor slabs} gives meaningful results for slabs that are also nearly horizontal (e.g. creating angle $\geq 2/100$ with the vertical direction).

\begin{theorem}\textbf{\emph{(Slabs $\nu$-parallel to $\Sigma$)}}\label{theorem: hor slabs} For every $0 < \nu \leq \pi/2$ and $\epsilon>0$, there exists $C_{\epsilon,\nu}>0$ such that the following hold. Let $g\in L^2(\Sigma)$. For $R\geq 1$ and $R^\epsilon\lesssim_\epsilon\rho\leq R$, let $w:\mathbb{R}^n\rightarrow [0,+\infty)$ be a weight of the form $\sum_{s\in\mathcal{S}}c_s\chi_s$, where $\mathcal{S}$ is a set of disjoint $\rho^{1/2}$-slabs $\nu$-parallel to $\Sigma$. Then the inequality
\begin{align*}
        \int_{B_R} |\widehat{gd\sigma}|^2w &\leq C_{\epsilon,\nu} R^\epsilon A_{\rho, R, \;{\rm supp}\;g} (w) \int |g|^2\\
        & \lessapprox_\nu \left(\frac{R}{\rho}\right)^{\frac{n-1}{n+1}}\sup_{\ell\perp {\rm supp}\;g}Xw(\ell)\int |g|^2
    \end{align*}
holds. In fact, if
\begin{equation*}
    g=\sum_{\tau\in\mathfrak{T}}g_\tau,\qquad{{\rm supp}\;g_\tau\subset\tau}
\end{equation*}
for some boundedly overlapping family $\mathfrak{T}$ of $\rho^{-1/2}$-caps $\tau$ of $\Sigma$, then
\begin{equation*}
    \int_{B_R} |\widehat{gd\sigma}|^2w\lessapprox_\nu 
    \sum_{\tau\in\mathfrak{T}}A_{\rho, R, \;{\rm supp}\; g_\tau}(w)\int |g_\tau|^2
    \lessapprox_\nu \left(\frac{R}{\rho}\right)^{\frac{n-1}{n+1}} \sum_{\tau\in\mathfrak{T}}\;\sup_{\ell\perp {\rm supp}\;g_\tau}Xw(\ell)\int |g_\tau|^2.
\end{equation*}
\end{theorem}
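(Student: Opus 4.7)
The plan is to deduce Theorem \ref{theorem: hor slabs} from Lemma \ref{lemma: small caps} by establishing an $L^2$ almost-orthogonality of the $\rho^{-1/2}$-cap pieces of $g$ against the weight $w$. Writing $g = \sum_{\tau \in \mathfrak{T}} g_\tau$ for a boundedly overlapping partition of $\Sigma$ into $\rho^{-1/2}$-caps, the heart of the matter is to prove
\begin{equation*}
\int_{B_R} |\widehat{gd\sigma}|^2 w \lessapprox_\nu \sum_{\tau} \int_{B_R} |\widehat{g_\tau d\sigma}|^2 \widetilde{w}
\end{equation*}
for a mild Schwartz envelope $\widetilde w$ of $w$. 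Applying Lemma \ref{lemma: small caps} term-by-term then yields the refined (second) conclusion of the theorem, and the coarser first conclusion follows from the monotonicity $A_{\rho,R,E'}(w) \leq A_{\rho,R,E}(w)$ for $E' \subset E$, the bounded overlap of the caps, and the already-established bound \eqref{eq: comparing}.

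The almost-orthogonality is handled slab by slab. Since $w = \sum_s c_s \chi_s$, it suffices to prove, for each $\rho^{1/2}$-slab $s$ that is $\nu$-parallel to $\Sigma$,
\begin{equation*}
\int_s |\widehat{gd\sigma}|^2 \lesssim_\nu \sum_\tau \int |\widehat{g_\tau d\sigma}|^2 \phi_s,
\end{equation*}
where $\phi_s \gtrsim \chi_s$ is a Schwartz bump adapted to $s$ whose Fourier transform is essentially supported in the dual slab $s^*$ of dimensions $\rho^{-1/2}\times\cdots\times\rho^{-1/2}\times 1$ with long axis along $v_s=n(s)$. Expanding the square and invoking Plancherel, the cross term $\int \widehat{g_\tau d\sigma}\,\overline{\widehat{g_{\tau'}d\sigma}}\,\phi_s$ is, up to rapidly decaying Schwartz tails, supported on pairs $(\tau,\tau')$ for which $(\xi_\tau - \xi_{\tau'}) \in s^*$. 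I would then dominate the surviving cross terms by the diagonal via Cauchy--Schwarz, paying a factor equal to the maximum number of interacting partners per cap.

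The geometric heart of the proof is to show that this interaction count is $O_\nu(1)$. Since $s^*$ has length $\sim 1$ along $v_s$ and width $\sim\rho^{-1/2}$ transverse to it, the interacting centres $\xi_{\tau'}$ project into $v_s^\perp$ within distance $\rho^{-1/2}$ of the projection of $\xi_\tau$. The $\nu$-parallel condition $|v_s\cdot n(\zeta)|\geq \sin\nu$ is precisely the statement that the projection $\Sigma \to v_s^\perp$ is a local diffeomorphism whose derivative has smallest singular value $\gtrsim \sin\nu$, hence inverse Lipschitz constant $\lesssim \csc\nu$. Combined with strict convexity, which ensures each line parallel to $v_s$ meets $\Sigma$ in at most two points, this shows that a $\rho^{-1/2}$-ball in $v_s^\perp$ pulls back to at most two regions of $\Sigma$ of diameter $\lesssim \csc(\nu)\,\rho^{-1/2}$, each containing only $O_\nu(1)$ caps. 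This is where the $\nu$-dependence enters and degenerates as $\nu \downarrow 0$.

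The main obstacle I anticipate is the bookkeeping: the Schwartz tails from cross terms not fully supported in $s^*$, the passage from $\chi_s$ to $\phi_s$ and from $w$ to $\widetilde w$, and the summation over $s$ against the weights $c_s$ must all be controlled so that the aggregate error, together with the $R^\epsilon$-loss from Lemma \ref{lemma: small caps}, fits inside the claimed $C_{\epsilon,\nu} R^\epsilon$. Once this is in place, the remaining steps (cap counting, Cauchy--Schwarz, and the comparison $A_{\rho,R,\mathrm{supp}\,g_\tau}(\widetilde w)\lesssim A_{\rho,R,\mathrm{supp}\,g_\tau}(w)$) are essentially mechanical.
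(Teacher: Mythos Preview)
Your proposal is correct and follows essentially the same approach as the paper's proof of the refined Theorem~\ref{theorem: hor slabs refined}: a slab-by-slab $L^2$ decoupling via Plancherel and Fourier-support considerations, a cap-interaction count controlled by the $\nu$-parallel condition, and then Lemma~\ref{lemma: small caps} applied term-by-term. The only noteworthy difference is bookkeeping: the paper uses compactly supported bumps $\eta_s$ so that the enlarged weight is $w^\star = \sum_s c_s \chi_{3s}$, and then passes from $w^\star$ back to $w$ by writing $w^\star$ as a finite sum of translates of $w$ and invoking translation-covariance of the extension operator, which sidesteps the Schwartz-tail comparison $A_{\rho,R,\cdot}(\widetilde w)\lesssim A_{\rho,R,\cdot}(w)$ you anticipate.
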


It follows that Stein's stronger conjecture \eqref{eq: Stein's conjecture} (and thus the Mizohata--Takeuchi conjecture) holds under the conditions of Theorem \ref{theorem: hor slabs} when the slabs involved are $R^{1/2}$-slabs. We single this out explicitly as a corollary.

\begin{corollary}\label{corollary: hor slabs}
Let $R \geq 1$ and suppose that $w$ is a weight of the form $\sum_{s\in\mathcal{S}}c_s\chi_s$, where $\mathcal{S}$ is a set of disjoint $R^{1/2}$-slabs which are $\nu$-parallel to $\Sigma$ for some $0 < \nu \leq \pi/2$. Then 
\begin{equation*}
    \int_{B_R} |\widehat{gd\sigma}|^2w
      \lessapprox_\nu \int |g(\xi)|^2\sup_{\ell\parallel N(\xi)} Xw(\ell) d\sigma(\xi).
\end{equation*}
for all $g\in L^2(\Sigma)$. 
\end{corollary}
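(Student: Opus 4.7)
The plan is to deduce Corollary~\ref{corollary: hor slabs} from the decomposed form of Theorem~\ref{theorem: hor slabs} applied at the borderline scale $\rho=R$, and then to convert the averaged tube quantity $A_{R,R,\mathrm{supp}\,g_\tau}(w)$ into the pointwise $X$-ray quantity $\sup_{\ell\parallel N(\xi)}Xw(\ell)$ via an elementary tube-inclusion argument. The choice $\rho=R$ is precisely what causes the lossy factor $(R/\rho)^{(n-1)/(n+1)}$ in Theorem~\ref{theorem: hor slabs} to collapse to $1$, which is what allows us to reach a genuine Mizohata--Takeuchi-type bound.

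First, I decompose $g=\sum_{\tau\in\mathfrak{T}} g_\tau$ using a smooth partition of unity $\{\psi_\tau\}$ subordinate to a boundedly overlapping cover of $\Sigma$ by $R^{-1/2}$-caps, so that $\mathrm{supp}\,g_\tau\subset\tau$ and $\sum_\tau|g_\tau|^2\leq|g|^2$. The constraint $R^\epsilon\lesssim_\epsilon\rho\leq R$ in Theorem~\ref{theorem: hor slabs} is satisfied trivially by $\rho=R$, and applying its second (decomposed) assertion yields
\[\int_{B_R}|\widehat{gd\sigma}|^2 w \;\lessapprox_\nu\; \sum_{\tau}A_{R,R,\mathrm{supp}\,g_\tau}(w)\int |g_\tau|^2 \;=\; \sum_{\tau} \frac{1}{R^{(n-1)/2}}\sup_{T_R \perp \mathrm{supp}\,g_\tau} w(T_R)\, \int |g_\tau|^2,\]
where the last equality uses that the $S_\rho$-sum in the definition of $A_{\rho,R,E}(w)$ collapses when $\rho=R$.

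Second, I claim the pointwise bound
\[A_{R,R,\mathrm{supp}\,g_\tau}(w) \;\lesssim\; \sup_{\ell\parallel N(\xi)}Xw(\ell) \quad\text{for every } \xi\in\tau.\]
Given any $T_R$ with axis parallel to some $N(\eta)$, $\eta\in\tau$, the normals $N(\eta)$ and $N(\xi)$ differ by an angle of at most $O(R^{-1/2})$, so the positional divergence between their directions over a length of $R$ is $O(R^{1/2})$, matching the tube radius. A routine trigonometric check then shows $T_R\subset CT_R^\xi$ with $C$ an absolute constant depending only on $\Sigma$, where $T_R^\xi$ is the tube of the same center and dimensions but with axis parallel to $N(\xi)$. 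Fubini along the $N(\xi)$-direction gives
\[w(T_R) \;\leq\; w(CT_R^\xi) \;\leq\; (CR^{1/2})^{n-1}\sup_{\ell\parallel N(\xi)}Xw(\ell) \;\lesssim\; R^{(n-1)/2}\sup_{\ell\parallel N(\xi)}Xw(\ell),\]
establishing the claim.

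Third, I insert this pointwise bound into the sum, using that $A_{R,R,\mathrm{supp}\,g_\tau}(w)$ is constant in $\xi$:
\[A_{R,R,\mathrm{supp}\,g_\tau}(w)\int |g_\tau|^2 = \int_\tau A_{R,R,\mathrm{supp}\,g_\tau}(w)|g_\tau(\xi)|^2 d\sigma(\xi)\lesssim \int_\tau |g_\tau(\xi)|^2\sup_{\ell\parallel N(\xi)}Xw(\ell)\,d\sigma(\xi).\]
Summing over $\tau$ and invoking $\sum_\tau|g_\tau|^2\leq|g|^2$ completes the deduction. The only substantive step beyond Theorem~\ref{theorem: hor slabs} itself is the tube inclusion, which is elementary and notably does \emph{not} require the $\nu$-parallel structure of $w$ (that hypothesis is already consumed in the application of Theorem~\ref{theorem: hor slabs}); consequently there is no serious obstacle once Theorem~\ref{theorem: hor slabs} is in hand.
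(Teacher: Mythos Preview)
Your proof is correct and follows essentially the same route as the paper, which simply asserts that the corollary follows from Theorem~\ref{theorem: hor slabs} at the borderline scale $\rho=R$. You have supplied the details the paper omits: in particular, the tube-inclusion step converting $A_{R,R,\mathrm{supp}\,g_\tau}(w)=R^{-(n-1)/2}\sup_{T_R\perp\mathrm{supp}\,g_\tau}w(T_R)$ into the pointwise quantity $\sup_{\ell\parallel N(\xi)}Xw(\ell)$ for each $\xi\in\tau$, via the observation that an $R^{1/2}$-tube with axis $N(\eta)$ sits inside a comparable $R^{1/2}$-tube with axis $N(\xi)$ whenever $\eta,\xi$ lie in the same $R^{-1/2}$-cap, followed by Fubini in the $N(\xi)$-direction.
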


Stein's conjecture continues to hold even when the slabs are curved. The precise formulation of this appears in Corollary \ref{corollary: flakes}, and it is proved using a direct method, which does not rely on Theorem~\ref{theorem: all weights}, and which also featured in \cite{Gu22}.

A substitute result for Theorem \ref{theorem: hor slabs} in the case where there is no restriction on $\nu$ (i.e. when the slabs can create arbitrarily small angles with normals to $\Sigma$) is as follows. 

\begin{theorem}\textbf{\emph{(All slabs)}}\label{theorem: all slabs} For every $\epsilon>0$, there exists $C_{\epsilon}>0$ such that the following hold. Let $g\in L^2(\Sigma)$. For $R\geq 1$ and $R^\epsilon\lesssim_\epsilon\rho\leq R$, let $w:\mathbb{R}^n\rightarrow [0,+\infty)$ be a weight of the form $\sum_{s\in\mathcal{S}}c_s\chi_s$, where $\mathcal{S}$ is a set of disjoint $\rho^{1/2}$-slabs with no conditions on their directions. Then the inequality
\begin{align*}
        \int_{B_R} |\widehat{gd\sigma}|^2w &\leq C_{\epsilon} R^\epsilon A_{\rho^{1/2}, R, \; {\rm supp}\;g} (w) \int |g|^2\\
        & \lessapprox \left(\frac{R}{\rho^{1/2}}\right)^{\frac{n-1}{n+1}}\sup_{\ell\perp {\rm supp}\;g}Xw(\ell)\int |g|^2
    \end{align*}
holds. In fact, if
\begin{equation*}
    g=\sum_{\tau\in\mathfrak{T}}g_\tau,\qquad{{\rm supp}\;g_\tau\subset\tau}
\end{equation*}
for some boundedly overlapping family $\mathfrak{T}$ of $\rho^{-1/4}$-caps $\tau$ of $\Sigma$, then
\begin{equation*}
    \int_{B_R} |\widehat{gd\sigma}|^2w\lessapprox 
    \sum_{\tau\in\mathfrak{T}}A_{\rho^{1/2}, R, \; {\rm supp}\; g_\tau}(w)\int |g_\tau|^2
    \lessapprox \left(\frac{R}{\rho^{1/2}}\right)^{\frac{n-1}{n+1}} \sum_{\tau\in\mathfrak{T}}\;\sup_{\ell\perp {\rm supp}\;g_\tau}Xw(\ell)\int |g_\tau|^2.
\end{equation*}
\end{theorem}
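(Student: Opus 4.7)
The plan is to derive Theorem~\ref{theorem: all slabs} by combining Lemma~\ref{lemma: small caps}, applied with parameter $\rho^{1/2}$, with an $L^2$-type decoupling of $g$ into $\rho^{-1/4}$-caps that is adapted to $\rho^{1/2}$-slabs of arbitrary orientation. The matching of scales $\rho^{-1/4}=(\rho^{1/2})^{-1/2}$ is dictated by the geometry: the Fourier dual of a $\rho^{1/2}$-slab $s$ with normal $n_s$ is a set $s^*$ essentially concentrated on a tube of length $1$ along $n_s$ and width $\rho^{-1/2}$ perpendicular, and the non-vanishing Gaussian curvature of $\Sigma$ forces the Minkowski difference $\tau-\tau'$ of two $\rho^{-1/4}$-caps to have transverse thickness $\rho^{-1/2}$ in the $\Sigma$-normal direction, matching the narrow dimensions of $s^*$. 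In Theorem~\ref{theorem: hor slabs} the a priori $\nu$-transversality of slabs to $\Sigma$ allows the finer cap scale $\rho^{-1/2}$; without this hypothesis one must coarsen, and this is what produces the weaker factor $\rho^{1/2}$ in place of $\rho$ in the conclusion.

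The proof has three steps. First, establish the $L^2$-decoupling
\begin{equation*}
\int_{B_R} |\widehat{g d\sigma}|^2 w \lessapprox \sum_{\tau \in \mathfrak{T}} \int_{B_R} |\widehat{g_\tau d\sigma}|^2 w,
\end{equation*}
valid for $g=\sum_{\tau\in\mathfrak{T}}g_\tau$ on a boundedly overlapping family of $\rho^{-1/4}$-caps and $w=\sum_{s\in\mathcal{S}}c_s\chi_s$ on $\rho^{1/2}$-slabs of arbitrary direction. Second, apply Lemma~\ref{lemma: small caps} with parameter $\rho^{1/2}$ to each piece to obtain
\begin{equation*}
\int_{B_R} |\widehat{g_\tau d\sigma}|^2 w \lessapprox A_{\rho^{1/2},\,R,\,{\rm supp}\,g_\tau}(w) \int |g_\tau|^2.
\end{equation*}
Third, combine: the refined statement is immediate from summing these two displays, the unrefined statement then follows from the dominations $A_{\rho^{1/2},R,\tau}(w)\le A_{\rho^{1/2},R,\,{\rm supp}\,g}(w)$ together with the bounded-overlap estimate $\sum_\tau \int|g_\tau|^2 \lesssim \int |g|^2$, and the $X$-ray transform form of the conclusion comes from the chain of inequalities around~\eqref{eq: comparing} applied with $\rho^{1/2}$ in place of $\rho$.

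The main obstacle is the $L^2$-decoupling step. Handling one slab $s$ at a time, expand $|\widehat{g d\sigma}|^2 = \sum_{\tau,\tau'}\widehat{g_\tau d\sigma}\,\overline{\widehat{g_{\tau'} d\sigma}}$, integrate against $\chi_s$, and pass to the Fourier side by Plancherel: the cross term $(\tau,\tau')$ becomes a pairing of $\widehat{\chi_s}$ (essentially supported on $s^*$) against the convolution $(g_\tau d\sigma)*\widetilde{g_{\tau'} d\sigma}$ (supported on $\tau-\tau'$), and is therefore negligible unless $(\tau-\tau')\cap s^*\ne\emptyset$. The crucial geometric input is that for any orientation of $s$, and for any fixed $\tau$, the number of $\tau'$ so linked to $\tau$ is bounded up to an $R^\epsilon$ factor: the $\rho^{-1/2}$-narrowness of $s^*$, combined with the strict-convexity-induced bound $|t|^2 \lesssim \rho^{-1/2}$ on the $\Sigma$-normal coordinate of the difference of cap centres $(t, |t|^2/2)$, confines the admissible centres $q$ to an $O(\rho^{-1/4})$-neighbourhood of the centre of $\tau$, which contains only $O(1)$ caps of $\mathfrak{T}$. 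Making this overlap count robust across all slab orientations and assembling the contributions cleanly using the Schwartz-tail decay of $\widehat{\chi_s}$ together with a pigeonholing over $\mathcal{S}$ is the technical heart of the argument; this step ultimately accounts for the $R^\epsilon$ loss in the statement.
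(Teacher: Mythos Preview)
Your approach matches the paper's: decouple $\widehat{g\,d\sigma}$ into $\rho^{-1/4}$-caps by Fourier analysis on each slab (this is the paper's inequality \eqref{eq: decoupling'}), then apply Lemma~\ref{lemma: small caps} at scale $\rho^{1/2}$ to each piece and sum. One small correction to your geometric overlap count: your claim that the admissible $\tau'$ lie in a \emph{single} $O(\rho^{-1/4})$-neighbourhood of $\tau$ rests on the $\Sigma$-normal coordinate of $\tau-\tau'$ being constrained to $O(\rho^{-1/2})$ by the narrowness of $s^*$, but that constraint only holds when the $\Sigma$-normal at $\tau$ is transverse to the axis $e$ of $s^*$; for a general slab orientation the set $\{\omega\in\Sigma:(\tau-\omega)\cap s^*\neq\varnothing\}$ can contain a second $\rho^{-1/4}$-cluster near the other intersection of the line $\omega(\tau)+\mathbb{R}e$ with $\Sigma$. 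The paper handles this via the dichotomy that $\omega_0-\omega\in s^*$ forces either $|\omega_0-\omega|\lesssim\rho^{-1/4+\delta}$ or ${\rm Angle}(\omega_0-\omega,e)\lesssim\rho^{-1/4+\delta}$, so $A(\tau)$ is covered by at most two $\rho^{-1/4+\delta}$-caps, which still yields the $O(R^\epsilon)$ bound you need.
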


\vspace{0.1in}

\begin{corollary}\textbf{\emph{($R^{1/2}$-slabs)}}
 Let $R \geq 1$ and suppose that $w$ is a weight of the form $\sum_{s\in\mathcal{S}}c_s\chi_s$, where $\mathcal{S}$ is a set of disjoint $R^{1/2}$-slabs. Then 
\begin{equation*}
    \int_{B_R} |\widehat{gd\sigma}|^2w \lessapprox R^{\frac{n-1}{2(n+1)}}\sup_{\ell\perp {\rm supp}\;g}Xw(\ell)\int |g|^2.
\end{equation*}
for all $g\in L^2(\Sigma)$. 
\end{corollary}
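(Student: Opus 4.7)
The plan is to derive this corollary as a direct specialization of Theorem~\ref{theorem: all slabs}, by setting the scale parameter $\rho$ equal to $R$. Under this choice, the hypothesis $R^\epsilon \lesssim_\epsilon \rho \leq R$ is vacuously satisfied, and the $\rho^{1/2}$-slabs on which $w$ is constant become precisely the $R^{1/2}$-slabs appearing in the corollary. Substituting $\rho = R$ into the conclusion
\[
\int_{B_R} |\widehat{gd\sigma}|^2w \lessapprox \left(\frac{R}{\rho^{1/2}}\right)^{\frac{n-1}{n+1}}\sup_{\ell\perp {\rm supp}\;g}Xw(\ell)\int |g|^2
\]
of Theorem~\ref{theorem: all slabs} yields the bound $(R/R^{1/2})^{(n-1)/(n+1)} = R^{(n-1)/(2(n+1))}$, which is exactly the factor claimed.

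Since this is a direct corollary, no additional argument is required beyond invoking Theorem~\ref{theorem: all slabs}. There is no genuine obstacle: the only point worth flagging is that we must use the version of Theorem~\ref{theorem: all slabs} with no restriction on the directions of the slabs (as opposed to Theorem~\ref{theorem: hor slabs}), since the corollary places no $\nu$-parallelism assumption on $\mathcal{S}$. This is why the exponent $(n-1)/(2(n+1))$ here is weaker (i.e., larger) than what one would obtain under a $\nu$-parallelism hypothesis via Corollary~\ref{corollary: hor slabs}, where Stein's conjecture already holds outright at scale $R^{1/2}$.
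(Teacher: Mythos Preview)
Your proposal is correct and matches the paper's approach exactly: the corollary is stated immediately after Theorem~\ref{theorem: all slabs} with no separate proof, as it follows by taking $\rho = R$ there. Your observation about why one must use Theorem~\ref{theorem: all slabs} rather than Theorem~\ref{theorem: hor slabs} is also apt.
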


\vspace{0.2in}

\textbf{Sharpness of inequality \eqref{eq: MTmain} given the choice of technique.} During the recent talk \cite{Gu22}, which in fact partially inspired the work in this paper, Guth explained that, using only basic local constancy and local $L^2$-orthogonality properties of the functions $\widehat{gd\sigma}$ -- which are indeed the only properties that we exploit in proving Theorem \ref{theorem: all weights} -- one {\em cannot} prove the Mizohata--Takeuchi conjecture for $B_R$ with a loss better than $\sim  (\log R)^{-3} R^{\frac{n-1}{n+1}}$. 

This means that inequality \eqref{eq: MTmain} of Theorem \ref{theorem: all weights}, which establishes the conjecture with a loss of $\lessapprox R^{\frac{n-1}{n+1}}$, is essentially sharp given the techniques used.

Guth's argument is discussed in Section \ref{section: Guth's example} for purposes of self-containment.

\vspace{0.2in}

\textbf{Acknowledgements.} We would like to thank Larry Guth, whose inspiring talk \cite{Gu22} partially motivated the work in this paper, for giving us permission to present here a version of his main argument from that talk. We also thank Jonathan Bennett for many illuminating conversations on this topic. The first author would like to acknowledge support from a Leverhulme Fellowship while part of this research was undertaken, and to thank David Beltr\'an and Bassam Shayya for some helpful conversations. The third author would like to acknowledge support from NSF Grant DMS-2238818 and  DMS-2055544. 

\section{Preliminaries}\label{section: preliminaries}

For our purposes, we may assume that all normals to $\Sigma$ have angle at most $1/100$ from the vertical direction, and that the projection of $\Sigma$ on the hyperplane $\mathbb{R}^{n-1}\times\{0\}$ is contained in the unit ball $B^{n-1}$ centred at 0. This convention allows us to assume that $\Sigma$ has a parametrisation
\begin{equation*}
    \Sigma=\{\Sigma(\omega):=(\omega, h(\omega))\text{, for }\omega\in B^{n-1}\}
\end{equation*}
for some $h:B^{n-1}\rightarrow\mathbb{R}$, and to work with the operator $E$ instead of $\widehat{\cdot\;  d\sigma}$, where
\begin{equation*}
    Eg(x):=\int_{B^{n-1}} e^{2\pi i \langle x,\Sigma(\omega)\rangle}g(\omega)d\omega \; \text{ for }x\in\mathbb{R}^n.
\end{equation*}
From now on, for fixed $\Sigma$ and$\epsilon>0$, we say that a quantity $C(R,\epsilon)$ satisfies
\begin{equation*}
    C(R,\epsilon)={\rm RapDec}_\epsilon(R)
\end{equation*}
if for every $N\in\mathbb{N}$ there exists a non-negative constant $C_{N,\epsilon}$ such that uniformly in $R \geq 1$ we have
\begin{equation*}
    |C(R,\epsilon)|\leq C_{N,\epsilon}R^{-N}.
\end{equation*}

\textit{\textbf{Wave packet decomposition adapted to }}$\boldsymbol{B_R}$. Let $\epsilon>0$ and $0 < \delta\ll \epsilon$. Fix $R\gg 1$, and cover $B^{n-1}$ by boundedly overlapping balls $\theta$ of radius $R^{-1/2}$. The set of these balls will be denoted by $\Theta_R$, and the balls will be referred to as $R^{-1/2}$\textit{-caps}. Let $\{\psi_\theta\}_{\theta\in\Theta_R}$ be a smooth partition of unity adapted to this cover. Thus,
\begin{equation*}
    g=\sum_{\theta\in \Theta_R} \psi_\theta g
\end{equation*}
for any $g:\mathbb{R}^{n-1}\rightarrow\mathbb{C}$ supported in $B^{n-1}$ (and belonging to some suitable class). Now, cover $\mathbb{R}^{n-1}$ by boundedly overlapping balls of radius $CR^{(1+\delta)/2}$ and centres on the lattice $V_R:=R^{(1+\delta)/2}\mathbb{Z}^{n-1}$. There exists a bump function $\eta$, adapted to the ball $B(0, R^{(1+\delta)/2})$, so that the bump functions $\eta_v:=\eta(\cdot -v)$, over $v\in V_R$, form a partition of unity for this cover. It follows that, with $\widehat{\cdot}$ and $\widecheck{\cdot}$ denoting the $(n-1)$-dimensional Fourier transform and its inverse respectively,
\begin{equation*}
    \widecheck{g}=\sum_{(\theta,v)}\eta_v (\psi_\theta g)^{\widecheck{}}
\end{equation*}
and thus
\begin{equation*}
    g=\sum_{(\theta,v)}\widehat{\eta_v}\ast (\psi_\theta g)
\end{equation*}
for all $g$ as above. Finally, restrict each of the above summands to the corresponding cap $\theta$. In particular, let
\begin{equation*}
    g_{\theta,v}:=\widetilde{\psi}_\theta \cdot (\widehat{\eta_v}\ast (\psi_\theta g)),
\end{equation*}
where $\widetilde{\psi}_\theta:=\widetilde{\psi}(R^{1/2}(\cdot-\omega_\theta))$ for some fixed smooth bump function $\widetilde{\psi}$ (where $\omega_\theta$ is the centre of the cap $\theta$), chosen so that $\widetilde{\psi}_\theta$ is supported in $\theta$ and equals 1 on the $cR^{1/2}$-neighbourhood of ${\rm supp}\;\psi_\theta$, for some small $c>0$.

The $g_{\theta,v}$ are the \textit{wave packets of $g$ at scale $R$}, while $\{g_{\theta,v}\}_{(\theta,v)\in\Theta_R\times V_R}$ constitutes the \textit{wave packet decomposition of $g$} at this scale. Note that the decomposition is $\epsilon$-dependent.

The function $g$ is roughly the sum of its wave packets, all of which are roughly orthogonal. More precisely, note that the function $\widehat{\eta_v}$ is rapidly decaying when $|\omega|\gg R^{-(1+\delta)/2}$, so
\begin{equation*}
    \|g_{\theta,v}-\widehat{\eta_v}\ast (\psi_\theta g)\|_\infty\leq {\rm RapDec}_\epsilon(R)\|g\|_2\text{ for each }(\theta,v),
\end{equation*}
hence
\begin{equation}\label{eq:wp1}
    \|g-\sum_{(\theta,v)\in \Theta_R\times V_R}g_{\theta,v}\|_\infty \leq {\rm RapDec}_\epsilon(R)\|g\|_2.\tag{wp1}
\end{equation}
The functions $g_{\theta,v}$ are almost orthogonal, in the sense that
\begin{equation}\label{eq:wp2}
    \|\sum_{(\theta,v)\in \mathbb{W}} g_{\theta,v}\|_2^2 \sim \sum_{(\theta,v)\in\mathbb{W}}\|g_{\theta,v}\|_2^2\tag{wp2}
\end{equation}
for every subset $\mathbb{W}$ of $\Theta_R\times V_R$.

It turns out that, for every $(\theta,v)$, $Eg_{\theta,v}$ is essentially supported in 
\begin{equation*}
    T_{\theta,v}:=\big\{x\in B_R: |x'+x_n \partial_\omega h(\omega_\theta)-v|\leq R^{1/2+\delta}\big\},
\end{equation*}
the $R^{1/2+\delta}$-tube in $B_R$ whose central line passes through $(v,0)$ and has direction the normal $N(\theta):=(\partial_\omega h (\omega_\theta),-1)$ to the cap $\Sigma(\theta)$. Indeed, it follows by a non-stationary phase argument that
\begin{equation}\label{eq:wp3}
    |Eg_{\theta,v}(x)|\leq (1+R^{-1/2}|x'+x_n\partial_\omega h(\omega_\theta)-v|)^{-(n+1)}{\rm RapDec}_\epsilon(R)\|g\|_2,\;\;\;\forall x\in B_R\setminus T_{\theta,v};\tag{wp3}
\end{equation}
a detailed analysis can be found in \cite{Gu18}.

Due to the curvature of $\Sigma$, different surface caps $\Sigma(\theta)$ have different normals, so there is a one-to-one correspondence between the pairs $(\theta,v)$ and the tubes $T_{\theta,v}$. We may thus denote each wave packet $g_{\theta,v}$ by $g_T$, for the tube $T=T_{\theta,v}$. 

Henceforth, denote
\begin{equation*}
    \mathbb{T}_\epsilon(B_R):=\{T_{\theta,v}:\;(\theta,v)\in\Theta_R\times V_R\text{ and }T_{\theta,v}\cap B_R\neq\varnothing\}
\end{equation*}
and
\begin{equation*}
    \mathbb{T}^\theta_{\epsilon}(B_R):=\{T_{\overline{\theta},v}:|\overline{\theta}-\theta|\lesssim R^{-1/2}, \;v\in V_R\text{ and }T_{\theta,v}\cap B_R\neq\varnothing\}
\end{equation*}
for each $\theta\in \Theta_R$, where the implicit multiplicative constant is sufficiently large. The above analysis ensures that
\begin{equation}\label{eq:wp4}
    Eg(x)=\sum_{T\in \mathbb{T}_\epsilon(B_R)}Eg_T(x)+{\rm RapDec}_\epsilon(R)\int |g|^2\;\;\text{ for all }x\in B_R,\tag{wp4}
\end{equation}
while also that any function $g_\theta$ supported on $\theta\in \Theta_R$ satisfies
\begin{equation}\label{eq:wp5}
    Eg_\theta(x)=\sum_{T\in \mathbb{T}_\epsilon^\theta(B_R)}Eg_T(x)+{\rm RapDec}_\epsilon(R)\int |g|^2\;\;\text{ for all }x\in B_R.\tag{wp5}
\end{equation}
We will be referring to $\{g_T\}_{T\in\mathbb{T}_\epsilon(B_R)}$ as the \textit{wave packet decomposition} of $g$ \textit{adapted to} $B_R$.

\textit{\textbf{Wave packet decompositions adapted to other balls.}} Let $R^\epsilon\lesssim_\epsilon\rho\leq R$, and fix a ball $B=B(y,\rho)$. For $x\in\mathbb{R}^n$, set $\widetilde{x}:=x-y$. It holds that
\begin{eqnarray*}
\begin{aligned}
Eg(x)&=\int e^{2\pi i \langle x, \Sigma(\omega)\rangle}g(\omega)d\omega\\
    &=\int e^{2\pi i \langle \widetilde{x},\Sigma(\omega)\rangle} e^{2\pi i \langle y, \Sigma(\omega)\rangle}g(\omega)d\omega\\
    &=E\widetilde{g}(\widetilde{x}),
\end{aligned}
\end{eqnarray*}
where $\widetilde{g}(\omega)=e^{2\pi i \langle y, \Sigma(\omega)\rangle}g(\omega)$. For every $x\in B$, $\widetilde{x}$ lives in $B_\rho$; therefore, by the earlier discussion,
\begin{align}\label{eq:wp6}
    Eg(x)&=\sum_{T\in\mathbb{T}_\epsilon(B_\rho)}E\widetilde{g}_T(\widetilde{x})+{\rm RapDec}_\epsilon(\rho)\int |\widetilde{g}|^2 \tag*{ }\\
        &=\sum_{T\in\mathbb{T}_\epsilon(B_\rho)}E\widetilde{g}_T(x-y)+{\rm RapDec}_\epsilon(R)\int |g|^2\qquad{\text{for all }}x\in B.\tag{wp6}
\end{align}
From now on, we will be referring to $\{\widetilde{g}_T\}_{T\in\mathbb{T}_\epsilon(B_\rho)}$ as the \textit{wave packet decomposition} of $g$ \textit{adapted to} $B$. Note that this decomposition is $y$-dependent.

By the above analysis, for every $\rho^{-1/2}$-cap $\tau$ we have
\begin{equation}\label{eq:wp7}
    Eg_\tau(x)=\sum_{T\in\mathbb{T}^\tau_\epsilon(B_\rho)}E\widetilde{g}_T(x-y)+{\rm RapDec}_\epsilon(R)\int |g|^2\qquad{\text{for all }}x\in B.\tag{wp7}
\end{equation}
Each of the wave packets in the above summand is essentially constant in magnitude; this is made rigorous in the subsection below.

\textit{\textbf{Fourier localisation and local constancy.}} Let $\epsilon>0$ and $R^\epsilon \lesssim_\epsilon \rho \leq R$. Fix $g\in L^2(B^{n-1})$ and a $\rho^{-1/2}$-cap $\tau$.

Roughly speaking, since $g_\tau$ is supported in $\tau$, the Fourier transform of $Eg_\tau$ is supported in the $\rho^{-1}$-neighbourghood of $\Sigma(\tau)$. The uncertainty principle then dictates that $|Eg_\tau|$ is essentially constant on each dual object, i.e. on each $\rho^{1/2}$-tube pointing in the direction the normal to $\Sigma(\tau)$.

The above heuristic is made rigorous as follows. Let $\omega(\tau)$ be the centre of $\tau$. The patch of the tangent space to $\Sigma$ at $\Sigma(\omega_\tau)$ that lives over $\tau$ is the set \begin{equation*}
    T_\tau\Sigma:=\Big\{\Sigma(\omega_\tau)+M_\tau\cdot(\omega-\omega_\tau):\;\omega\in \tau\Big\},\;\;\text{where }M_{\tau}:=
    \begin{bmatrix}
    I_{n-1} & 0\\
    \partial_\omega h(\omega_\tau) & 1
    \end{bmatrix}.
\end{equation*}
The convex set
\begin{equation*}
    S(\tau):=\left\{\Sigma(\omega_\tau) + M_\tau \cdot (\omega-\omega_\tau)+t\cdot \frac{N(\omega_\tau)}{\|N(\omega_\tau)\|}:\; \;\omega\in\tau, \;t\in \big[-\rho^{-1},\rho^{-1}\big]\right\}
\end{equation*}
is a `thickening' of the above tangent patch by $\rho^{-1}$ in the direction normal to $\Sigma(\tau)$. The Fourier transform of ${Eg_{\tau}}_{|_{B_R}}$ is essentially supported in a dilation of $S(\tau)$. We are interested in a precise version of this for appropriate cut-offs of $Eg_\tau$. 

In particular, let $\zeta:\mathbb{R}^n\rightarrow\mathbb{R}$ with $\zeta=1$ on $B_1$ and $\zeta=0$ outside $B_{2}$. For every ball $B=B(\overline{x},\rho)$ in $\mathbb{R}^n$ define
\begin{equation*}
    \zeta_B(x):=\zeta\Big(\frac{x-\overline{x}}{\rho}\Big).
\end{equation*}
There exists a constant $C$, depending only on the dimension $n$, such that the following holds.

\begin{proposition}\emph{\textbf{(Fourier localisation)}}\label{prop: Fourier localisation}
Let $R^\epsilon\lesssim_\epsilon\rho\leq R$, and let $g_\tau$ be supported in a $\rho^{-1/2}$-cap $\tau$. Then, for every $\rho$-ball $B$ in $\mathbb{R}^n$,
    \begin{equation*}
        Eg_\tau\cdot \zeta_B=G_\tau+{\rm RapDec}_\epsilon(\rho)\|g_\tau\|_2,
    \end{equation*}
for some $G_\tau:\mathbb{R}^n\rightarrow\mathbb{C}$ with the property that $\widehat{G_\tau}$ is supported in $S(C\cdot\tau)$.
\end{proposition}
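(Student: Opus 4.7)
The plan is to compute $\widehat{Eg_\tau\cdot\zeta_B}$ on the Fourier side as the convolution of the surface-supported measure $\widehat{Eg_\tau}$ with the concentrated bump $\widehat{\zeta_B}$, then extract the main piece by a smooth frequency cut-off to $S(C\tau)$ and show that the discarded tail decays rapidly in $\rho$. Since
$Eg_\tau(x)=\int_\tau e^{2\pi i\langle x,\Sigma(\omega)\rangle}g_\tau(\omega)\,d\omega$
realises $Eg_\tau$ as the inverse Fourier transform of the pushforward of $g_\tau\,d\omega$ to $\Sigma(\tau)$, one has
\[
\widehat{Eg_\tau\cdot\zeta_B}(\xi) \;=\; \int_\tau g_\tau(\omega)\,\widehat{\zeta_B}\bigl(\xi-\Sigma(\omega)\bigr)\,d\omega,
\]
and a direct computation gives $\widehat{\zeta_B}(\eta)=e^{-2\pi i\langle \overline{x},\eta\rangle}\rho^{n}\widehat{\zeta}(\rho\eta)$, which is rapidly decaying past the scale $\rho^{-1}$.

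Next, I would fix a smooth cut-off $\chi$ equal to $1$ on $S(\tfrac{C}{2}\tau)$ and supported in $S(C\tau)$, with the constant $C$ to be chosen. Setting $G_\tau:=\bigl(\chi\cdot\widehat{Eg_\tau\cdot\zeta_B}\bigr)^{\vee}$ puts $\widehat{G_\tau}$ inside $S(C\tau)$ by construction. The error $Eg_\tau\cdot\zeta_B-G_\tau$ is then the inverse Fourier transform of $(1-\chi)\cdot\widehat{Eg_\tau\cdot\zeta_B}$, and the goal becomes a pointwise bound
\[
\Bigl| \int_\tau g_\tau(\omega)\,\bigl[(1-\chi(\cdot))\widehat{\zeta_B}(\cdot-\Sigma(\omega))\bigr]^{\vee}\!(x)\,d\omega \Bigr| \;\leq\; \mathrm{RapDec}_\epsilon(\rho)\,\|g_\tau\|_2.
\]

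The technical heart of the argument is the geometric separation claim: for every $\omega\in\tau$ and every $\xi\notin S(\tfrac{C}{2}\tau)$, one has $\rho|\xi-\Sigma(\omega)|\gg 1$ as soon as $C$ is large enough. This follows from Taylor-expanding $\Sigma$ around $\omega_\tau$: the vector $\Sigma(\omega)-[\Sigma(\omega_\tau)+M_\tau(\omega-\omega_\tau)]$ is purely vertical with magnitude $\lesssim \rho^{-1}$ (with constant fixed by the $C^2$-norm of $h$), so $\Sigma(\tau)$ lies in a bounded enlargement of $S(\tau)$. Choosing $C$ large enough ensures that any $\xi$ outside $S(\tfrac{C}{2}\tau)$ is separated from $\Sigma(\tau)$ either tangentially by $\gtrsim\rho^{-1/2}$ or normally by $\gg\rho^{-1}$; the rapid decay of $\widehat{\zeta_B}$ then yields $|(1-\chi(\xi))\widehat{\zeta_B}(\xi-\Sigma(\omega))|=\mathrm{RapDec}_\epsilon(\rho)$ uniformly in $\omega\in\tau$ and $\xi$.

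Finally, integrating against $g_\tau$ and applying Cauchy--Schwarz together with the volume bound $|\tau|^{1/2}\lesssim \rho^{-(n-1)/4}$ converts $\|g_\tau\|_1$ into $\|g_\tau\|_2$ at the cost of a polynomial factor in $\rho$, which is absorbed into the RapDec. The main obstacle is precisely the calibration step: because the natural normal width of $S(\tau)$ is of the same order ($\rho^{-1}$) as the curvature-induced deviation of $\Sigma(\omega)$ from its tangent plane, a naive truncation would not separate $\widehat{\zeta_B}$'s centre from the complement of $S(C\tau)$. One must enlarge $\tau$ to $C\tau$ with $C$ absorbing both the $C^2$-norm and the non-vanishing curvature of $\Sigma$, and perform the smooth cut-off strictly inside $S(C\tau)$, so that geometric separation translates into rapid decay through the Fourier concentration scale of $\zeta_B$.
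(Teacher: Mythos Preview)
The paper does not give its own proof of this proposition; it defers entirely to \cite{HI22}. So there is no in-paper argument to compare against, and your outline follows the natural route one would expect.

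Your architecture is sound: write $\widehat{Eg_\tau\cdot\zeta_B}(\xi)=\int_\tau g_\tau(\omega)\,\widehat{\zeta_B}(\xi-\Sigma(\omega))\,d\omega$, use that $\widehat{\zeta_B}$ is concentrated at scale $\rho^{-1}$, and truncate in frequency. The tangential separation analysis is correct: separation $\gtrsim\rho^{-1/2}$ rescales to $\rho\cdot\rho^{-1/2}=\rho^{1/2}$, and the Schwartz decay of $\widehat{\zeta}$ then gives genuine rapid decay in $\rho$.

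The gap is in the normal direction. For $\xi$ lying just outside $S(\tfrac{C}{2}\tau)$ only in the normal direction, the distance $|\xi-\Sigma(\omega)|$ is of order $M\rho^{-1}$ for some constant $M$ depending on $C$ and the curvature of $\Sigma$, but \emph{not} on $\rho$. Thus $\rho|\xi-\Sigma(\omega)|\sim M$ is bounded, and
\[
|\widehat{\zeta_B}(\xi-\Sigma(\omega))|=\rho^{n}|\widehat{\zeta}(\rho(\xi-\Sigma(\omega)))|\lesssim_N \rho^{n}M^{-N},
\]
which is a constant multiple of $\rho^n$, not ${\rm RapDec}_\epsilon(\rho)$. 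Consequently your asserted uniform bound $|(1-\chi(\xi))\widehat{\zeta_B}(\xi-\Sigma(\omega))|={\rm RapDec}_\epsilon(\rho)$ fails on that normal strip, and passing to $L^1$ in $\xi$ does not help: the mass of $\widehat{\zeta}$ outside the rescaled slab $\{|\eta_n|\le M\}$ is a constant depending on $M$, not on $\rho$.

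The standard repair is to insert $\delta$-room in the normal thickness: truncate at normal scale $\rho^{-1+\delta}$ (with $0<\delta\ll\epsilon$) rather than at $C^2\rho^{-1}$. The normal separation then rescales to $\rho^{\delta}$, and Schwartz decay yields $\rho^{-N\delta}$ for every $N$, which is ${\rm RapDec}_\epsilon(\rho)$ with $\epsilon$-dependent constants. This places $\widehat{G_\tau}$ in a slab with normal thickness $\rho^{-1+\delta}$ rather than $C^2\rho^{-1}$; one should check that the downstream application (Proposition~\ref{prop: uncertainty principle}) tolerates this enlargement, which it does up to harmless $\rho^{O(\delta)}$ factors, or consult \cite{HI22} for the precise calibration used there.
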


The set $C\cdot\tau$ is the $C\rho^{-1/2}$-cap with the same centre as $\tau$. The proof of Proposition \ref{prop: Fourier localisation} is exposed in full detail in \cite{HI22}.

When a function $f$ is Fourier localised on a convex set (such as the slab $S(\tau)$), then to some extent it can be treated as a constant function on objects dual to that convex set. The precise statement appears in Lemmas 6.1 and 6.2 in \cite{GWZ20}. For our purposes, we only need the following corollary.

\begin{proposition}\emph{\textbf{(Local constancy)}}\label{prop: uncertainty principle}
Let $R^\epsilon\lesssim_\epsilon\rho\leq R$. Let $\tau$ be a $\rho^{-1/2}$-cap, and consider a function $f:\mathbb{R}^n\rightarrow\mathbb{C}$ with $\widehat{f}\subset S(\tau)$. Then, every tube $T$ in $\mathbb{R}^n$ with direction $N(\tau)$, radius $\rho^{1/2}$ and length $\rho$ satisfies
\begin{equation*}
    \sup_{x\in T}|f(x)|^2\lesssim \frac{1}{|T|}\int |f|^2w_T,
\end{equation*}
for some non-negative function $\omega_T:\mathbb{R}^n\rightarrow\mathbb{R}$, with $\omega_T=1$ on $T$ and $\omega(x)\sim C_N(1+n(x,T))^{-N}$ for all $x\in\mathbb{R}^n$ and $N\in\mathbb{N}$, where $n(x,T)$ is the smallest $n\in\mathbb{N}$ such that $x\in nT$. In particular, if $g\in L^2(B^{n-1})$ and $B$ is a $\rho$-ball intersecting $T$, then
\begin{equation*}
    \sup_{x\in T}|Eg_{\tau}(x)|^2\lesssim \rho^\delta \frac{1}{|2\widetilde{T}|}\int_{2\widetilde{T}}|Eg_{\tau}|^2+{\rm RapDec}_\epsilon (R)\int |g_\tau|^2.
\end{equation*}
for all $\widetilde{T}$ in $\mathbb{T}_\epsilon^\tau(B)$ intersecting $T$.
\end{proposition}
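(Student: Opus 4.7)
The plan is to exploit the duality between the slab $S(\tau)$ and the $\rho^{1/2}$-tube $T$ in direction $N(\tau)$: these sets are polar to each other, since $S(\tau)$ has dimensions $\rho^{-1/2}\times\cdots\times \rho^{-1/2}\times \rho^{-1}$ with thin direction $N(\tau)$, while $T$ has dimensions $\rho^{1/2}\times\cdots\times \rho^{1/2}\times \rho$ with long direction $N(\tau)$. Consequently one may fix a Schwartz function $\eta$ with $\widehat{\eta}\equiv 1$ on $S(\tau)$, $\|\eta\|_1\lesssim 1$, and $|\eta(x)|\lesssim |T|^{-1}\widetilde{w}_T(x)$, where $\widetilde{w}_T$ is a rapidly-decreasing weight equal to $1$ on (a translate of) $T$ centred at the origin and satisfying $\widetilde{w}_T(x)\lesssim_N (1+n(x,T))^{-N}$.

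Given this, since $\widehat{f}\subset S(\tau)$ we have $f=f\ast\eta$. Cauchy--Schwarz applied to the measure $|\eta|\,dy$ yields
\begin{equation*}
|f(x)|^2 \;\leq\; \|\eta\|_1\,(|f|^2\ast|\eta|)(x) \;\lesssim\; \frac{1}{|T|}\int |f(y)|^2 \widetilde{w}_T(x-y)\,dy.
\end{equation*}
For $x\in T$, the translate $\widetilde{w}_T(x-\cdot)$ is pointwise dominated by a common weight $w_T$ of the type claimed (one centred on $T$ with the rapid-decay profile against $n(\,\cdot\,,T)$), and taking the supremum over $x\in T$ produces the first displayed inequality.

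For the corollary on $E g_\tau$, the function $E g_\tau$ itself is only Fourier-supported on $\Sigma(\tau)$ rather than on the slab $S(\tau)$, so to apply the first part one first localises to the $\rho$-ball $B$. Invoking Proposition~\ref{prop: Fourier localisation} we may write $E g_\tau\cdot\zeta_B = G_\tau + \mathrm{RapDec}_\epsilon(\rho)\|g_\tau\|_2$ with $\widehat{G_\tau}\subset S(C\tau)$, and then apply the bound already proved to $f=G_\tau$ (with the slightly enlarged cap $C\tau$, whose dual tube has the same dimensions as $T$ up to a harmless constant). This gives
\begin{equation*}
\sup_{x\in T}|Eg_\tau(x)|^2 \;\lesssim\; \frac{1}{|T|}\int |G_\tau|^2 w_T + \mathrm{RapDec}_\epsilon(R)\int|g_\tau|^2.
\end{equation*}
To convert the weighted integral of $|G_\tau|^2$ over $\mathbb{R}^n$ into the claimed unweighted integral of $|Eg_\tau|^2$ over $2\widetilde{T}$, one uses the $R^{-\epsilon}\lesssim\rho$ range to absorb the rapid-decay tails of $w_T$ against the characteristic function of $2\widetilde{T}$ for any $\widetilde{T}\in\mathbb{T}^\tau_\epsilon(B)$ intersecting $T$: since $\widetilde T$ is a $\rho^\delta$-thickening of $T$ with the same direction and $\widetilde T\subset 2B$, the ratio $|2\widetilde{T}|/|T|\sim \rho^{(n-1)\delta/2}$ is absorbed into the factor $\rho^\delta$ (for a slightly adjusted $\delta$), while the off-tube tail of $w_T$ is controlled by summing the rapidly decaying $L^2$-masses on dilates $2^k\widetilde T$.

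The main technical obstacle is the transition in the last step from a global weighted $L^2$-bound on $G_\tau$ to an unweighted $L^2$-bound of $Eg_\tau$ restricted to $2\widetilde{T}$: one must carefully track that the tails of $w_T$, the discrepancy between $G_\tau$ and $Eg_\tau\cdot\zeta_B$, and the mismatch between $T$ and $2\widetilde T$ are all absorbed into the single $\rho^\delta$ loss and a rapidly-decreasing error. This is precisely the content of Lemmas~6.1--6.2 of \cite{GWZ20}, so for the write-up one can either cite those lemmas directly or reproduce the short tail estimate argument outlined above.
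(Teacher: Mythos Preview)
Your proposal is correct and follows essentially the same approach as the paper. For the first conclusion the paper simply cites Lemmas~6.1--6.2 of \cite{GWZ20}, whereas you spell out the underlying convolution/Cauchy--Schwarz argument; for the second conclusion both you and the paper invoke Proposition~\ref{prop: Fourier localisation} to pass to $G_\tau$, apply the first conclusion, and then use that $w_T={\rm RapDec}_\epsilon(\rho)$ on $2B\setminus 2\widetilde{T}$ (since points there satisfy $n(\,\cdot\,,T)\gtrsim \rho^{\delta}$) together with the trivial bound $\int_{2B}|Eg_\tau|^2\lesssim \rho\int|g_\tau|^2$ to dispose of the tail---your dyadic-shell phrasing of the tail estimate is equivalent.
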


\begin{proof}
The first conclusion is a direct application of Lemmas 6.1 and 6.2 in \cite{GWZ20}. We now in turn apply this conclusion to the function $Eg_{\tau}\cdot \zeta_B$, which is essentially Fourier supported in $S(C\cdot\tau)$ by Proposition \ref{prop: Fourier localisation}. Respecting the notation of Proposition \ref{prop: Fourier localisation}, denote by $T_C$ the tube with the same central line as $T$, radius $(C^{-2}\rho)^{1/2}$ and length $C^{-2}\rho$. We obtain
\begin{equation*}
    \sup_{x\in T}|Eg_{\tau}(x)|^2=\sup_{x\in T}|Eg_\tau(x)\cdot \zeta_B(x)|^2\lesssim \frac{1}{|T_C|}\int |Eg_\tau\cdot \zeta_B|^2w_{T_C}+{\rm RapDec}_\epsilon(\rho)\int |g_\tau|^2.
\end{equation*}
Since $w_T(x)\sim w_{T_C}(x)$ for all $x\in\mathbb{R}^n$, it holds that
\begin{eqnarray*}
\begin{aligned}
\frac{1}{|T_C|}\int |Eg_\tau\cdot \zeta_B|^2w_{T_C}&\lesssim \frac{1}{|T|}\int |Eg_\tau\cdot\zeta_B|^2w_T\\
    &\lesssim \frac{1}{|T|}\int_{2B} |Eg_\tau|^2w_T\\
    &\sim \frac{1}{|T|}\int_{2B\cap 2\widetilde{T}} |Eg_\tau|^2w_T
    +
    \frac{1}{|T|}\int_{2B\setminus 2\widetilde{T}} |Eg_\tau|^2w_T\\
    &\lesssim \frac{\rho^\delta}{|2\widetilde{T}|}\int_{2\widetilde{T}} |Eg_\tau|^2w_T
    +
    {\rm RapDec}_\epsilon(\rho)\frac{1}{|T|}\int_{2B\setminus 2\widetilde{T}} |Eg_\tau|^2w_T.
\end{aligned}
\end{eqnarray*}
The result follows as, due to the decay properties of $w_T$,
\begin{equation*}
    {\rm RapDec}_\epsilon(\rho)\frac{1}{|T|}\int_{2B\setminus 2\widetilde{T}} |Eg_\tau|^2w_T={\rm RapDec}_\epsilon(\rho)={\rm RapDec}_\epsilon (R)\int |g_\tau|^2.
\end{equation*}
\end{proof}

%%%%%%%%%%%%%%%%%%%%%%%%%%%%%%%%%%%%%%%%%%%%%%%%%%%%%%%%%%%%%%%%%%%%%%%%%%%%%%%%%%%%%%%%%%%%%%%%%%%%%%%%%%%%%%%%%%%%%%%%%%%%%%%%%%%%

\section{Some new cases where Mizohata--Takeuchi holds.}\label{section: trivial estimates}

In this section, $\Sigma:=\{(\omega, h(\omega)):\omega\in B^{n-1}\}$ is a fixed hypersurface in $\mathbb{R}^n$, all of whose normals point within angle $1/100$ from the vertical direction. There is no requirement that $\Sigma$ have non-vanishing Gaussian curvature.

The truth of the Mizohata--Takeuchi conjecture for some simple weights (such as indicator functions of neighbourhoods of roughly horizontal hyperplanes or hypersurfaces) implies that the conjecture holds for more complicated weights (superpositions of appropriately large patches of such surfaces). For instance, the Mizohata--Takeuchi conjecture holds for nearly horizontal $R^{1/2}$-slabs (case $\rho=R$ of Theorem \ref{theorem: hor slabs}) because it holds for horizontal hyperplanes (Plancherel).

\begin{definition}
{\rm A $\rho$\textit{-flake} (or simply a \textit{flake}) in $\mathbb{R}^n$ is the 1-neighbourhood of any hypersurface of the form $\{(\omega,\Gamma(\omega)):\omega\in B^{n-1}_\rho\}$, where $B^{n-1}_\rho$ is a $\rho$-ball in $\mathbb{R}^{n-1}$ and $\Gamma:B^{n-1}_\rho\rightarrow\mathbb{R}$. A flake is \textit{nearly horizontal} if all its tangent spaces create angle larger than $2/100$ with the vertical direction.} 
\end{definition}

Note that $\rho$-slabs are $\rho$-flakes. We will usually be taking $\rho \geq 1$. We emphasise that $\Gamma$ and $h$ are unrelated.

Every line normal to $\Sigma$ which intersects a nearly horizontal flake will do so along a line segment of length about $1$.  Therefore, the following lemma states that the Mizohata--Takeuchi conjecture holds when the weight is the indicator of a single nearly horizontal flake. 

\begin{lemma}\label{lemma: single flake} Let $\gamma$ be a nearly horizontal flake in $\mathbb{R}^n$. Then, for all $R\geq 1$ and $g\in L^2(B^{n-1})$,
\begin{equation*}
    \int_{B_R\cap\gamma}|Eg|^2\lessapprox \int |g|^2.
\end{equation*}
\end{lemma}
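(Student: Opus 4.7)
The plan is to use a unit-Jacobian change of variables to straighten $\gamma$ into a horizontal slab, and then to recognise the resulting operator as a non-degenerate Fourier integral operator, whose $L^2$-boundedness (via $T^\ast T$ and Schur's test) is uniform in the flake's scale $\rho$. A naive application of Plancherel to $Eg(\cdot,x_n)$ for each $x_n$ would lose a factor of $\rho$ (which can be as large as $R$), so the FIO viewpoint is essential.

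Near-horizontality of $\gamma$ yields $|\nabla\Gamma|_\infty\leq C_\Gamma$ for a dimensional constant $C_\Gamma$, so $\gamma\subset\{(y',y_n+\Gamma(y')):y'\in B^{n-1}_{C\rho},\,|y_n|\leq C\}$ for some $C>0$. The change of variables $(x',x_n)\mapsto(y',y_n):=(x',x_n-\Gamma(x'))$ has Jacobian $1$, so
\begin{equation*}
    \int_\gamma|Eg(x)|^2\,dx\leq\int_{-C}^{C}\int_{B^{n-1}_{C\rho}}|F(y',y_n)|^2\,dy'\,dy_n,
\end{equation*}
where $F(y',y_n):=Eg(y',y_n+\Gamma(y'))=\int e^{2\pi i\Phi(y',\omega;y_n)}g(\omega)\,d\omega$ with $\Phi(y',\omega;y_n):=y'\cdot\omega+(y_n+\Gamma(y'))h(\omega)$. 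For each fixed $y_n\in[-C,C]$ the operator $T_{y_n}:g\mapsto F(\cdot,y_n)$ is an FIO whose mixed Hessian,
\begin{equation*}
    \partial_{y'}\partial_\omega\Phi=I+\nabla\Gamma(y')\,(\nabla h(\omega))^\top,
\end{equation*}
has determinant $1+\nabla\Gamma(y')\cdot\nabla h(\omega)$ (and in particular is independent of $y_n$). Since every normal to $\Sigma$ lies within angle $1/100$ of the vertical we have $|\nabla h|_\infty\leq \tan(1/100)$, while the $2/100$-angle hypothesis on $\gamma$ gives $|\nabla\Gamma|_\infty\leq\cot(2/100)$; the product satisfies $|\nabla\Gamma\cdot\nabla h|\leq 1/2$, so the phase is non-degenerate uniformly in $y_n$.

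To conclude I would apply $T^\ast T$ and Schur. The kernel of $T_{y_n}^\ast T_{y_n}$ equals $e^{2\pi i y_n(h(\omega)-h(\omega'))}K(\omega,\omega')$, where
\begin{equation*}
    K(\omega,\omega')=\int_{B^{n-1}_{C\rho}}e^{2\pi i\psi(y')}\,dy',\qquad\psi(y')=(\omega-\omega')\cdot y'+(h(\omega)-h(\omega'))\Gamma(y'),
\end{equation*}
and $|\nabla_{y'}\psi|\geq(1-|\nabla h|_\infty|\nabla\Gamma|_\infty)|\omega-\omega'|\geq|\omega-\omega'|/2$. Repeated integration by parts against a smooth cutoff of $B^{n-1}_{C\rho}$ (whose $k$-th derivative is of order $\rho^{-k}$) yields $|K(\omega,\omega')|\lesssim_N\rho^{n-1}(1+\rho|\omega-\omega'|)^{-N}$; splitting the $\omega'$-integral at $|\omega-\omega'|\sim 1/\rho$ then gives $\int|K(\omega,\omega')|\,d\omega'\lesssim 1$ uniformly in $\omega$ and $\rho$. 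Symmetry and Schur's test yield $\|T_{y_n}\|_{L^2\to L^2}\lesssim 1$, and integrating in $y_n\in[-C,C]$ produces $\int_\gamma|Eg|^2\lesssim\int|g|^2$ with no $R^\epsilon$-loss whatsoever. Restricting to $B_R\cap\gamma\subset\gamma$ gives the lemma.

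The only substantive step is the lower bound on the mixed Hessian, in which both near-horizontality hypotheses (for $\Sigma$ via $|\nabla h|$ and for $\gamma$ via $|\nabla\Gamma|$) enter in an essential way; after that, the integration-by-parts and Schur computations are routine.
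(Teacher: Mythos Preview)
Your $T^\ast T$/Schur approach is genuinely different from the paper's induction-on-scales argument, and for affine $\Gamma$ it works and even removes the $R^\epsilon$ loss. But for a general flake there is a real gap in the integration-by-parts step. The paper's definition of a nearly horizontal flake imposes only $|\nabla\Gamma|\le\cot(2/100)$; nothing is assumed about $\nabla^k\Gamma$ for $k\ge 2$. Each integration by parts in $K(\omega,\omega')=\int e^{2\pi i\psi}\chi$ differentiates not only the cutoff $\chi$ but also the amplitude $\nabla\psi/|\nabla\psi|^{2}$, and since $\nabla^2\psi=(h(\omega)-h(\omega'))\nabla^2\Gamma$ this produces a term of size $|\nabla^2\Gamma|/|\omega-\omega'|$ rather than $(\rho|\omega-\omega'|)^{-1}$. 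To obtain your claimed decay $|K|\lesssim_N\rho^{n-1}(1+\rho|\omega-\omega'|)^{-N}$ you would need $|\nabla^k\Gamma|\lesssim\rho^{1-k}$ for $2\le k\le N$; for a flake with merely $|\nabla^k\Gamma|=O(1)$ (say $\Gamma(y')=\sin y_1'$) you only get $|K|\lesssim_N\rho^{n-1}(1+|\omega-\omega'|)^{-N}$, and then Schur yields $\int|K|\,d\omega'\lesssim\rho^{n-1}$ rather than $\lesssim 1$. For $\Gamma$ that is only $C^1$ you cannot integrate by parts at all.

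The paper's proof is insensitive to this because it uses only the transversality between the near-vertical wave-packet tubes and the near-horizontal flake: one covers $\gamma\cap B_R$ by $R^{1/2}$-balls $B$, applies the inductive hypothesis on each $B$ to $g_B=\sum_{T\cap B\neq\varnothing}g_T$, and sums using that each tube meets only $R^{O(\delta)}$ of the balls (so the $g_B$ are almost orthogonal). Only the first-order bound $|\nabla\Gamma|\lesssim 1$ enters, at the price of the $R^\epsilon$ factor inherent in induction on scales.
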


\begin{proof}
The proof easily follows by induction on scales, and only a sketch is provided here. In particular, the estimate trivially holds when $R\lesssim 1$. For arbitrary larger $R$, we cover the flake $\gamma$ by finitely overlapping $R^{1/2}$-balls $B$. For every one of these balls $B$, we may assume that
\begin{equation*}
    \int_{B\cap\gamma}|Eg|^2\lessapprox \int |g_B|^2,
\end{equation*}
where $g_B$ is the sum of the wave packets $g_T$ of $g$ at scale $R$ that intersect $B$. The functions $g_B$ are essentially orthogonal, as each of the tubes $T$ in question has width $R^{1/2+\delta}$ (where as in Section~\ref{section: preliminaries}, $0< \delta \ll \epsilon$) and creates angle $\gtrsim 1$ with the flake, hence it intersects $R^{O(\delta)}$ of the balls $B$. Adding up the above estimate over all $B$ completes the proof. \end{proof}

\begin{remark} \label{remark: 1} {\rm We emphasise that when $\gamma$ is specifically a horizontal hyperplane, then the stronger estimate
\begin{equation*}
    \int_\gamma |Eg|^2 = \int |g|^2
\end{equation*}
directly follows by Plancherel's theorem. Indeed, for every $(x,t)\in\mathbb{R}^{n-1}\times\mathbb{R}$,
\begin{equation*}
    Eg(x,t)=\int e^{2\pi i \langle x,\omega\rangle}e^{2\pi i t h(\omega)}g(\omega)d\omega=\widehat{g_t}(x),
\end{equation*}
where $g_t:=e^{2\pi i t h(\cdot)}g$ and $\widehat{\cdot}$ denotes the standard Fourier transform on $\mathbb{R}^{n-1}$. Therefore,
\begin{equation*}
    \int |Eg(\cdot, t)|^2=\int |\widehat{g_t}|^2=\int |g_t|^2=\int |g|^2
\end{equation*}
for all $t\in \mathbb{R}$. (Note that this directly yields \eqref{eq: easy 2}.) After an appropriate change of variables, a similar argument resolves the Mizohata--Takeuchi conjecture when the weight is the indicator function of the $1$-neighbourhood of any hyperplane (independently of orientation), and subsequently when the weight is a sum of indicator functions of such $1$-neighbourhoods. See \cite[Corollary 3]{BNS22} for a stronger estimate (a certain identity) in this specific scenario.}
\end{remark}

\hfill $\blacksquare$

Lemma \ref{lemma: single flake} easily implies the Mizohata--Takeuchi conjecture for superpositions of appropriately large flakes, and in fact an estimate stronger than Stein's conjecture \eqref{eq: Stein's conjecture}. 

\begin{corollary}\textbf{\emph{(MT holds for $R^{1/2}$-flakes)}}\label{corollary: flakes} The inequality
\begin{equation*}
    \int_{B_R}|Eg|^2w\lessapprox \|Xw\|_\infty \int |g|^2
\end{equation*}
holds for every $g\in L^2(B^{n-1})$ and any weight $w:\mathbb{R}^n\rightarrow [0,+\infty)$ of the form $\sum_{\gamma\in\mathcal{F}}c_\gamma\chi_\gamma$, where $\mathcal{F}$ is a family of $R^{1/2}$-flakes. In fact, the stronger estimate
\begin{equation*}
    \int_{B_R}|Eg|^2w\lessapprox \sum_{T\in\mathbb{T}}\; \sup_{\ell\subset T}Xw(\ell)\int |g_T|^2
\end{equation*}
holds, where $\{g_T\}_{T\in\mathbb{T}}$ is the wave packet decomposition of $g$ at scale $R$.
\end{corollary}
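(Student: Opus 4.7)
The plan is to combine the wave packet decomposition at scale $R$ with Lemma~\ref{lemma: single flake}, processing one flake at a time, and then to convert the resulting flake-coefficient bound into one involving $Xw$ by a short averaging argument. First decompose $w=\sum_{\gamma\in\mathcal{F}}c_\gamma\chi_\gamma$ so that $\int_{B_R}|Eg|^2 w=\sum_\gamma c_\gamma\int_{B_R\cap\gamma}|Eg|^2$. Fix a flake $\gamma$ and consider the scale-$R$ wave packet decomposition $g=\sum_{T\in\mathbb{T}}g_T$. The tubes $T$ are parallel to normals to $\Sigma$, hence essentially vertical, and by~\eqref{eq:wp3} the wave packets with $T\cap\gamma=\varnothing$ contribute only $\mathrm{RapDec}_\epsilon(R)\int|g|^2$ to $\int_{B_R\cap\gamma}|Eg|^2$. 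Setting $g^\gamma:=\sum_{T:\,T\cap\gamma\neq\varnothing}g_T$, Lemma~\ref{lemma: single flake} together with the wave packet orthogonality~\eqref{eq:wp2} yields
\begin{equation*}
\int_{B_R\cap\gamma}|Eg|^2 \;\lessapprox\; \int|g^\gamma|^2 \;\sim\; \sum_{T:\,T\cap\gamma\neq\varnothing}\int|g_T|^2,
\end{equation*}
at least when $\gamma$ is nearly horizontal; general flakes are first broken into nearly horizontal subpieces (or disposed of separately, since the presence of a steep tangent direction already forces $\|Xw\|_\infty\gtrsim R^{1/2}$).

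Swapping the order of summation gives
\begin{equation*}
\int_{B_R}|Eg|^2 w \;\lessapprox\; \sum_{T\in\mathbb{T}}\Big(\int|g_T|^2\Big)\Big(\sum_{\gamma:\,T\cap\gamma\neq\varnothing}c_\gamma\Big),
\end{equation*}
so it remains to show that, for each $T$, $\sum_{\gamma:\,T\cap\gamma\neq\varnothing}c_\gamma \lesssim \sup_{\ell\subset T}Xw(\ell)$.

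For this last step, I would fix $T$ and consider doubly-infinite lines $\ell$ parallel to the axis of a mild enlargement of $T$. Since $T$ is essentially vertical with cross-section of radius $R^{1/2}$, and each flake $\gamma$ meeting $T$ has horizontal projection an $(n{-}1)$-ball of radius $R^{1/2}$ covering a constant proportion of that cross-section, a random such $\ell$ meets $\gamma$ with probability $\gtrsim 1$, contributing length $|\ell\cap\gamma|\sim 1$ to $Xw(\ell)=\sum_\gamma c_\gamma|\ell\cap\gamma|$. Averaging this identity over lines in the cross-section and pigeonholing produces some $\ell^\ast$ with $Xw(\ell^\ast)\gtrsim \sum_{\gamma:\,T\cap\gamma\neq\varnothing}c_\gamma$, proving the refined bound. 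The weaker $\|Xw\|_\infty$-form then follows by estimating $\sup_{\ell\subset T}Xw(\ell)\leq\|Xw\|_\infty$ and summing via~\eqref{eq:wp2}, which collapses $\sum_T\int|g_T|^2$ to $\int|g|^2$.

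The principal obstacle I anticipate is the treatment of flakes whose tangent spaces contain nearly vertical directions: these must be decomposed carefully so that Lemma~\ref{lemma: single flake} applies to each piece with acceptable constants, or else handled via a separate trivial bound exploiting that $\|Xw\|_\infty$ is automatically large in this case. A secondary subtlety is the averaging/pigeonhole step, where one must choose the enlargement of $T$ so as to correctly account for flakes whose horizontal centres lie near its cross-sectional boundary.
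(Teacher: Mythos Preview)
Your argument follows the paper's proof essentially line for line: fix a flake $\gamma$, replace $g$ by $g^\gamma=\sum_{T\cap\gamma\neq\varnothing}g_T$ using \eqref{eq:wp3}, apply Lemma~\ref{lemma: single flake} and \eqref{eq:wp2}, sum over $\gamma$, and swap the order of summation. Your averaging/pigeonhole justification of $\sum_{\gamma:\,T\cap\gamma\neq\varnothing}c_\gamma\lessapprox\sup_{\ell\subset T}Xw(\ell)$ is exactly what the paper compresses into its one-line parenthetical about the tubes having width $R^{1/2+\delta}$ rather than $R^{1/2}$; your version is more explicit but the content is the same.

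One caution: your proposed fallback for steep flakes---``a steep tangent direction already forces $\|Xw\|_\infty\gtrsim R^{1/2}$''---is not correct. A steep tangent only makes the $1$-neighbourhood thick in the vertical direction by a bounded factor (roughly $\sqrt{1+|\nabla\Gamma|^2}$ at that point), not by $R^{1/2}$. The paper's proof does not treat this case either; it silently applies Lemma~\ref{lemma: single flake}, which is stated only for nearly horizontal flakes, so the corollary should be read with that implicit restriction (consistent with the standing hypothesis of the section that all normals to $\Sigma$ are within $1/100$ of vertical). If you genuinely want arbitrary flakes, your other suggestion---subdividing into nearly horizontal pieces---is the right direction, but it needs care and is not what the paper does.
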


\begin{proof}
Fix $g:L^2(B^{n-1})$ and $\gamma\in\mathcal{F}$, and denote by $\mathbb{T}_\gamma$ the set of tubes in $\mathbb{T}$ that intersect $\gamma$. For all $x\in\gamma$,
\begin{equation*}
    Eg(x)=Eg_\gamma(x)+{\rm RapDec}_\epsilon(R)\int |g|^2,
\end{equation*}
where $g_\gamma:=\sum_{T\in\mathbb{T}_\gamma}g_T$. Hence, by Lemma \ref{lemma: single flake},
\begin{equation*}
    \int_\gamma |Eg|^2\lessapprox \int |g_\gamma|^2
    \sim \sum_{T\in\mathbb{T}_\gamma}\int |g_T|^2
\end{equation*}
up to an error of ${\rm RapDec}_\epsilon(R)\int |g|^2$. Adding up over all $\gamma\in\mathcal{F}$, we obtain
\begin{eqnarray*}
\begin{aligned}
    \int |Eg|^2w & \lessapprox \sum_{\gamma\in\mathcal{F}}c_\gamma \sum_{T\in\mathbb{T}_\gamma}\int |g_T|^2\\
    &=\sum_{T\in\mathbb{T}}\left(\sum_{\gamma\in\mathcal{F}:\;\gamma\cap T\neq\varnothing}c_\gamma\right) \int |g_T|^2\\
    &\lessapprox \sum_{T\in\mathbb{T}}\; \sup_{\ell\subset T}Xw(\ell) \int |g_T|^2
\end{aligned}
\end{eqnarray*}
up to an error of ${\rm RapDec}_\epsilon(R)\int |g|^2$ (where the final $\approx 1$-loss is due to the fact that the tubes in $\mathbb{T}$ have width $R^{1/2+\delta}$, rather than $R^{1/2}$). The last quantity is at most $\|Xw\|_\infty\int|g|^2$. 

\end{proof}

\begin{remark} {\rm The idea behind the proof of Corollary \ref{corollary: flakes} also appeared in \cite{Gu22}, where the same result was presented in the special case where the flakes are horizontal slabs. Moreover, it was there pointed out that the statement of the corollary also implies \eqref{eq: easy 1}, i.e. that the Mizohata--Takeuchi conjecture holds with loss $\lessapprox R^{1/2}$ in $\mathbb{R}^2$, by replacing each point in ${\rm supp}\;w$ by a horizontal $R^{1/2}$-slab (a process which enlarges the maximal line occupancy of $w$ by $\lesssim R^{1/2}$). Perhaps an easier way to derive \eqref{eq: easy 1} is to observe that, by Proposition \ref{prop: uncertainty principle}, the Mizohata--Takeuchi conjecture holds with $\approx 1$-loss for each function $g_\theta$ supported in an $R^{1/2}$-cap $\theta$; so \eqref{eq: easy 1} follows by the Cauchy--Schwarz inequality, as $B^1$ consists of $\sim R^{1/2}$ such caps.}
\end{remark}
%%%%%%%%%%%%%%%%%%%%%%%%%%%%%%%%%%
%%%%%%%%%%%%%%%%%%%%%%%%%%%%%%%%

%%%%%%%%%%%%%%%%%%%%%%%%%%%%%%%%%%%%%%%%%%%%%%%%%%%%%%%%%%%%%%%%%%%%%%%%%%%%%%%%%%%%%%%%%%%%%%%%%%%%%%%%%%%%%%%%%%%%%%%%%%%%%%%%%%%%%
%%%%%%%%%%%%%%%%%%%%%%%%%%%%%%%%%%%%%%%%%%%%%%%%%%%%%%%%%%%%%%%%%%%%%%%%%%%%%%%%%%%%%%%%%%%%%%%%%%%%%%%%%%%%%%%%%%%%%%%%%%%%%%%%%%%
\section{Mizohata--Takeuchi with $R^{\frac{n-1}{n+1}}$-loss: Theorem \ref{theorem: all weights}}\label{section: main proof}

Theorem \ref{theorem: all weights} immediately follows from the stronger Theorem \ref{theorem: all weights refined} below, which takes into account the directions in which the waves propagate. In particular, fix $n\geq 2$. For $g\in L^2(B^{n-1})$ and $\mathbb{T}\subset \mathbb{T}_\epsilon(B_R)$, define
\begin{equation*}
    g_{\mathbb{T}}:=\sum_{T\in\mathbb{T}}g_T,
\end{equation*}
where $\{g_T\}_{T\in\mathbb{T}_\epsilon(R)}$ is the wave-packet decomposition of $g$ adapted to $B_R$ (at scale $R$). %Let $\tilde{\mathbb{T}}$ consist of all members of $\mathbb{T}$ together with all $T + x_0$ which meet $T\in \mathbb{T}$.
 
\begin{theorem}\label{theorem: all weights refined} For every $\epsilon>0$, there exists a positive constant $C_\epsilon$, which depends only on $\Sigma$ and $\epsilon$, such that
\begin{eqnarray}\label{eq: all weights refined}
\begin{aligned}
        \int_{B_R} |Eg_\mathbb{T}|^2w \leq &C_\epsilon R^\epsilon \left( \sum_{T \in \mathbb{T}} \Big[\sum_{B\in\mathcal{B}:\;B\cap T\neq\emptyset}w^{\frac{n+1}{2}}(B) \Big]\|g_T\|_2^2\right)^{\frac{2}{n+1}} \|g_\mathbb{T}\|_2^{\frac{2(n-1)}{n+1}} \\
        + &{\rm RapDec}_\epsilon(R)\|w\|_\infty \int |g_{\mathbb{T}}|^2
\end{aligned}
\end{eqnarray}
for all $R\geq 1$, $g\in L^2(\Sigma)$, $\mathbb{T}\subset \mathbb{T}_\epsilon(R)$ and weights $w:\mathbb{R}^n\rightarrow [0,+\infty)$ on $\mathbb{R}^n$, and for every family $\mathcal{B}$ of boundedly overlapping $R^{1/2}$-balls. 
\end{theorem}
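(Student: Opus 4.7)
The plan is to derive Theorem \ref{theorem: all weights refined} from the refined Stein--Tomas-type estimate of \cite{GIOW20}, combined with dyadic pigeonholing and H\"older's inequality at the Stein--Tomas exponent $p=\frac{2(n+1)}{n-1}$. By linearity, $Eg_\mathbb{T}=\sum_{T\in\mathbb{T}}Eg_T$ on $B_R$, so the ${\rm RapDec}_\epsilon(R)\|w\|_\infty\int|g_\mathbb{T}|^2$ term in the statement is reserved for absorbing any such errors arising from the refined Stein--Tomas inequality and the wave-packet identities recalled in Section~\ref{section: preliminaries}.

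The first main step is standard dyadic pigeonholing: at a cost of a factor $(\log R)^{O(1)}\lesssim R^\epsilon$, I may reduce matters to a single configuration in which all surviving wave packets $T$ satisfy $\|g_T\|_2\sim\alpha$, all surviving balls $B\in\mathcal{B}$ satisfy $w^{(n+1)/2}(B)\sim\beta$, and each surviving ball meets $\sim M$ of the surviving tubes. Writing $Y$ for the union of the surviving balls and $L$ for their number, H\"older's inequality on $Y$ with conjugate exponents $(n+1)/(n-1)$ and $(n+1)/2$ gives
$$\int_Y \Big|\sum_T Eg_T\Big|^2 w \;\leq\; \Big\|\sum_T Eg_T\Big\|_{L^p(Y)}^{2}\Big(\int_Y w^{\frac{n+1}{2}}\Big)^{\frac{2}{n+1}} \;\sim\; (L\beta)^{\frac{2}{n+1}}\Big\|\sum_T Eg_T\Big\|_{L^p(Y)}^{2}.$$

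For the $L^p(Y)$-norm I invoke the refined Stein--Tomas inequality of \cite{GIOW20} in the form
$$\Big\|\sum_T Eg_T\Big\|_{L^p(Y)}^{p}\;\lessapprox\;M^{\frac{p}{2}-1}\sum_T\|Eg_T\|_{p}^{p},$$
whose hypothesis, namely that each $R^{1/2}$-ball of $Y$ meets $\lesssim M$ wave packets, is ensured by our pigeonholing. Since $\|Eg_T\|_p\sim\|g_T\|_2\sim\alpha$ by essentially single-cap Stein--Tomas, this yields $\|\sum_T Eg_T\|_{L^p(Y)}^{2}\lessapprox M^{\frac{2}{n+1}}\alpha^{2}\#\mathbb{T}^{\frac{n-1}{n+1}}$, and writing $\alpha^{2}\#\mathbb{T}^{\frac{n-1}{n+1}}\sim\alpha^{\frac{4}{n+1}}\|g_\mathbb{T}\|_{2}^{\frac{2(n-1)}{n+1}}$ via the almost-orthogonality \eqref{eq:wp2} we obtain
$$\int_Y\Big|\sum_T Eg_T\Big|^2 w \;\lessapprox\; (\alpha^{2}ML\beta)^{\frac{2}{n+1}}\,\|g_\mathbb{T}\|_{2}^{\frac{2(n-1)}{n+1}}.$$
A double-counting identity $\alpha^{2}ML\beta\sim\sum_{T\in\mathbb{T}}\big[\sum_{B\in\mathcal{B}:\,B\cap T\neq\emptyset} w^{\frac{n+1}{2}}(B)\big]\,\|g_T\|_2^{2}$ on the pigeonholed piece matches this with the target right-hand side of \eqref{eq: all weights refined}; summing the $O((\log R)^3)$ dyadic configurations then concludes.

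The main obstacle is the clean invocation of the refined Stein--Tomas estimate from \cite{GIOW20} in the precise wave-packet form stated above; once that tool is granted, the remaining manipulations are routine. Some care is nevertheless needed to track the $\delta\ll\epsilon$ tolerances from the wave-packet decomposition of Section~\ref{section: preliminaries}, to handle balls in $\mathcal{B}$ with $w^{(n+1)/2}(B)$ smaller than $R^{-N}\|w\|_\infty R^{n}$ (which are trivially absorbed into the RapDec error) and those with $w^{(n+1)/2}(B)$ larger than the relevant maximal scale (which produce the dominant term), and to ensure the bounded-overlap hypothesis on $\mathcal{B}$ is used uniformly so that $\sum_B\|Eg\|_{L^p(B)}^p\sim\|Eg\|_{L^p(Y)}^p$ throughout.
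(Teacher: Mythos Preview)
Your proposal is correct and follows essentially the same route as the paper: pigeonhole so that $\|g_T\|_2$ is constant and each surviving $R^{1/2}$-ball meets $\sim M$ tubes, apply H\"older at the Stein--Tomas exponent, invoke the refined Stein--Tomas/decoupling estimate of \cite{GIOW20}, and finish with the double-counting identity $\sum_{B}\#\{T:T\cap B\neq\emptyset\}\,w^{(n+1)/2}(B)=\sum_{T}\sum_{B:B\cap T\neq\emptyset}w^{(n+1)/2}(B)$. The only cosmetic difference is that you additionally pigeonhole on the value $\beta\sim w^{(n+1)/2}(B)$, which is unnecessary (the paper works directly with $w^{(n+1)/2}(U_k)$ in the counting) but harmless.
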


As an immediate consequence of this we have:
\begin{corollary}\label{cor: all weights refined} For every $\epsilon>0$, there exists a positive constant $C_\epsilon$, which depends only on $\Sigma$ and $\epsilon$, such that
\begin{eqnarray}\label{eq: all weights refined_1}
\begin{aligned}       
\int_{B_R} |Eg_\mathbb{T}|^2w \leq & C_\epsilon R^\epsilon \left( \int  |g_\mathbb{T}(s)|^2   \sup_{T \parallel N(s), \, T \in \mathbb{T}} w^{\frac{n+1}{2}}(2T) ds \right)^{\frac{2}{n+1}} \|g_\mathbb{T}\|_2^{\frac{2(n-1)}{n+1}} \\
\leq &C_\epsilon R^\epsilon \; \sup_{T\in{\mathbb{T}}} \left(\int_{2T} w^{\frac{n+1}{2}}\right)^{\frac{2}{n+1}}\|g_\mathbb{T}\|_2^2\\
\end{aligned} 
\end{eqnarray}   
up to a ${\rm RapDec}_\epsilon(R)\|w\|_\infty \int |g_{\mathbb{T}}|^2$ error term, for all $R\geq 1$, $g\in L^2(\Sigma)$, $\mathbb{T}\subset \mathbb{T}_\epsilon(R)$ and weights $w:\mathbb{R}^n\rightarrow [0,+\infty)$ on $\mathbb{R}^n$. 
\end{corollary}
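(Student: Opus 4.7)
The plan is to extract Corollary \ref{cor: all weights refined} from Theorem \ref{theorem: all weights refined} by rewriting the bracketed sum in \eqref{eq: all weights refined} as an integral against $|g_\mathbb{T}|^2$ with a directional weight, and then trivialising to get the uniform supremum bound.

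First I would replace the discrete sum $\sum_{B\cap T\neq\emptyset} w^{\frac{n+1}{2}}(B)$ by $w^{\frac{n+1}{2}}(2T)$. Since $\mathcal{B}$ consists of boundedly overlapping $R^{1/2}$-balls, any $B\in\mathcal{B}$ meeting an $R^{1/2+\delta}$-tube $T$ is contained in (a constant dilate of) $2T$, and the bounded overlap turns the sum of $w^{\frac{n+1}{2}}$-masses into $\int_{2T} w^{\frac{n+1}{2}}$ at the cost of an absolute constant (all such multiplicative constants can be absorbed into the $C_\epsilon R^\epsilon$ factor).

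Next, I would group the tubes $T\in\mathbb{T}$ according to their associated $R^{-1/2}$-cap $\theta=\theta(T)\in\Theta_R$. All tubes sharing the same $\theta$ point in the common direction $N(\theta)$, so
\[
\sum_{T\in\mathbb{T}} w^{\frac{n+1}{2}}(2T)\,\|g_T\|_2^2 \;=\; \sum_\theta \sum_{\substack{T\in\mathbb{T}\\\theta(T)=\theta}} w^{\frac{n+1}{2}}(2T)\,\|g_T\|_2^2 \;\leq\; \sum_\theta \bigg(\sup_{\substack{T\in\mathbb{T}\\T\parallel N(\theta)}} w^{\frac{n+1}{2}}(2T)\bigg)\sum_{\substack{T\in\mathbb{T}\\\theta(T)=\theta}}\|g_T\|_2^2 .
\]
By the wave-packet orthogonality property \eqref{eq:wp2} applied with $\mathbb{W}=\{T\in\mathbb{T}:\theta(T)=\theta\}$, together with the fact that $g_T$ is supported in $\theta(T)$, we have $\sum_{\theta(T)=\theta,\,T\in\mathbb{T}}\|g_T\|_2^2 \sim \|\sum_{\theta(T)=\theta,\,T\in\mathbb{T}} g_T\|_2^2 \sim \int_{\theta}|g_\mathbb{T}|^2$, the last equality because wave packets assigned to other caps contribute negligibly to $g_\mathbb{T}$ restricted to $\theta$. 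For $s\in\theta$ the normal $N(s)$ is within $O(R^{-1/2})$ of $N(\theta)$, so the per-cap supremum coincides (up to harmless constants) with $\sup_{T\parallel N(s),\,T\in\mathbb{T}} w^{\frac{n+1}{2}}(2T)$, allowing me to reassemble the sum over $\theta$ as the integral $\int |g_\mathbb{T}(s)|^2 \sup_{T\parallel N(s),\,T\in\mathbb{T}} w^{\frac{n+1}{2}}(2T)\,ds$. Substituting into \eqref{eq: all weights refined} gives the first inequality of \eqref{eq: all weights refined_1}.

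For the second inequality, I simply bound the pointwise supremum by its uniform version $\sup_{T\in\mathbb{T}} \int_{2T} w^{\frac{n+1}{2}}$ and pull it out of the integral, after which $\int |g_\mathbb{T}|^2$ combines with the factor $\|g_\mathbb{T}\|_2^{2(n-1)/(n+1)}$ to yield $\|g_\mathbb{T}\|_2^2$. No step here is substantive; the only point deserving care is step two, namely verifying that the wave-packet orthogonality \eqref{eq:wp2} and the essential support of $g_T$ in $\theta(T)$ are strong enough to identify $\sum_{\theta(T)=\theta}\|g_T\|_2^2$ with $\int_\theta|g_\mathbb{T}|^2$ up to a constant, and this is standard.
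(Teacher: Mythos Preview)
Your approach is correct and matches the paper's: the corollary is stated there simply as ``an immediate consequence'' of Theorem~\ref{theorem: all weights refined}, and the steps you outline (bounding the ball-sum by $w^{(n+1)/2}(2T)$, grouping tubes by direction, invoking \eqref{eq:wp2}, then passing to the uniform supremum) constitute precisely that implicit derivation. The point you flag as requiring care is indeed standard, handled via the bounded overlap of the caps together with the near-constancy of the directional supremum $\sup_{T\parallel N(s),\,T\in\mathbb{T}} w^{(n+1)/2}(2T)$ across adjacent caps.
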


\begin{remark}\label{remark: epsilon}
{\rm We need the error term ${\rm RapDec}_\epsilon(R)\|w\|_\infty\int |g_\mathbb{T}|^2$ in these results because $w$ may be large at some points of ${\rm supp} \; Eg_\mathbb{T}$ which are outside $\bigcup_{T \in \mathbb{T}} T$. Theorem~\ref{theorem: all weights refined} manifestly implies Theorem~\ref{theorem: all weights} directly, since the error term is easily absorbed into the right-hand side of the first inequality of Theorem~\ref{theorem: all weights}. It is not possible to take $\epsilon=0$ in either Theorem~\ref{theorem: all weights refined} or in
inequality \eqref{eq: mainineq} of Theorem~\ref{theorem: all weights}. For the case of Theorem~\ref{theorem: all weights refined}, this is
because of the example (see \cite[p.104]{V81}, \cite{R86}, \cite{B93} or \cite[pp.125--126]{V97}) demonstrating the necessity of a logarithmic term in the
discrete $l^2 - L^6$ restriction theorem for the paraboloid. For the
argument linking the two phenomena see \cite[pp.355--358]{BD15}. As we
observe below, Theorem~\ref{theorem: all weights refined} is essentially a reformulation of the
refined decoupling theorem \cite{GIOW20}. For the case of Theorem~\ref{theorem: all weights}, one
may observe directly that with $g$ having all wave packet coefficients
equal, and $w:=|Eg|^{4/(n-1)}$, then $\{w^{(n+1)/2}(T)\}_T$ is uniformly
distributed across the wave packets $T$, and thus the passage from
Theorem~\ref{theorem: all weights refined} to \eqref{eq: mainineq} is tight. (This was noted in discussions between Po
Lam Yung, Zane Li and the first author.) Theorem~\ref{theorem: all weights refined} is furthermore
closely related to the improved decoupling theorem of \cite{GMW20}. More
precisely, if one takes the natural weight $w = |Eg_T|^{4/(n-1)}$ in
Theorem~\ref{theorem: all weights refined}, one obtains an inequality slightly stronger than the one
considered in \cite[Theorem 1.2]{GMW20}, but with $R^\epsilon$ loss rather
than the logarithmic loss obtained there when $n=2$. Notice the Stein-like nature of the middle term appearing in \eqref{eq: all weights refined_1}.

%However, unlike in the case of Theorem~\ref{theorem: all weights}, it is definitively not possible to take $\epsilon = 0$ in Theorem~\ref{theorem: all weights refined}. This is because of the example (see \cite[p.104]{V81}, \cite{R86}, \cite{B93} or \cite[pp.125--126]{V97}) demonstrating the necessity of a logarithmic term in the discrete $\ell^2 - L^6$ restriction theorem for the paraboloid. For the argument linking the two phenomena see \cite[pp.355--358]{BD15}. As we observe below, Theorem~\ref{theorem: all weights refined} is essentially a reformulation of the refined decoupling theorem \cite{GIOW20}. It is furthermore closely related to the improved decoupling theorem of \cite{GMW20}. More precisely, if one takes the natural weight $w = |Eg_\mathbb{T}|^{4/(n-1)}$ in Theorem~\ref{theorem: all weights refined}, one obtains an inequality slightly stronger than the one considered in \cite[Theorem 1.2]{GMW20}, but with $R^\epsilon$ loss rather than the logarithmic loss obtained there when $n=2$. Notice the Stein-like nature of the middle term appearing in \eqref{eq: all weights refined_1}.
}
\end{remark}

Theorem~\ref{theorem: all weights refined} is  actually a reformulation of the following refined Stein--Tomas or decoupling estimate.  Theorem~\ref{theorem: refined Strichartz} was also discovered independently by Xiumin Du and Ruixiang Zhang (personal
communication).

\begin{theorem}\emph{(Refined decoupling \cite{GIOW20})}\label{theorem: refined Strichartz}
Let $\epsilon>0$, $g\in L^2(B^{n-1})$, and let $\mathbb{T}$ be a subset of $\mathbb{T}_\epsilon(B_R)$ with the property that $\|g_T\|_2$ is roughly constant over all $T\in\mathbb{T}$. For each $k\in\mathbb{N}$, denote by $U_k$ an essentially disjoint union of $R^{1/2}$-balls in $B_R$ each intersecting $\sim k$ tubes in $\mathbb{T}$. Then the function
\begin{equation*}
    g_{\mathbb{T}}=\sum_{T\in\mathbb{T}}g_T
\end{equation*}
satisfies
\begin{equation}
\label{eq: improved ST}
\begin{aligned}
\|Eg_{\mathbb{T}}\|_{L^{\frac{2(n+1)}{n-1}}(U_k)} &\leq C_\epsilon R^\epsilon \left(\frac{k}{\#\mathbb{T}}\right)^{\frac{1}{n+1}} \left(\sum_{T\in\mathbb{T}}\|Eg_T\|_{L^{\frac{2(n+1)}{n-1}}}^2\right)^{1/2} \\
&\sim C_\epsilon R^\epsilon \left(\frac{k}{\#\mathbb{T}}\right)^{\frac{1}{n+1}} \left(\sum_{T\in\mathbb{T}}\|g_T\|_{2}^2\right)^{1/2} \sim C_\epsilon R^\epsilon \left(\frac{k}{\#\mathbb{T}}\right)^{\frac{1}{n+1}} \|g_{\mathbb{T}}\|_{2}. 
\end{aligned}
\end{equation}
\end{theorem}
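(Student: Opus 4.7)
The plan is to derive this refined decoupling estimate from the standard Bourgain--Demeter $\ell^2$-decoupling for $\Sigma$ at the critical exponent $p = 2(n+1)/(n-1)$, combined with the geometric constraint that on each $R^{1/2}$-ball in $U_k$ only $\sim k$ wave packets contribute meaningfully. Throughout I would write $p = 2(n+1)/(n-1)$ and note the key arithmetic identity $\tfrac{1}{2} - \tfrac{1}{p} = \tfrac{1}{n+1}$.

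The first step would be to partition $U_k$ into the disjoint $R^{1/2}$-balls $B$ of its definition, and apply Bourgain--Demeter decoupling on each $B$:
\begin{equation*}
\|Eg_{\mathbb{T}}\|_{L^p(B)} \leq C_\epsilon R^{\epsilon/2}\Big(\sum_{\theta \in \Theta_R} \|Eg_{\mathbb{T},\theta}\|_{L^p(w_B)}^2\Big)^{1/2},
\end{equation*}
where $g_{\mathbb{T},\theta} := \sum_{T\in\mathbb{T},\,T\parallel N(\theta)} g_T$ and $w_B$ is a suitable weight adapted to $B$. The decisive observation is now purely geometric: by the wave packet decay \eqref{eq:wp3}, a summand $Eg_{\mathbb{T},\theta}$ is negligible on $B$ (contributing only a ${\rm RapDec}_\epsilon(R)$ term) unless some $T\in\mathbb{T}$ with $T\parallel N(\theta)$ passes through $B$. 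By hypothesis there are only $\sim k$ such tubes, and since distinct $\theta$'s correspond to distinct directions, at most $\sim k$ caps contribute non-negligibly to the sum over $\theta$ for a given $B$.

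Having reduced the $\ell^2$-sum to $\sim k$ terms, I would apply H\"older with exponent $p/2>1$ to convert $\ell^2$ into $\ell^p$ at the cost of $k^{1/2-1/p} = k^{1/(n+1)}$:
\begin{equation*}
\Big(\sum_{\theta} \|Eg_{\mathbb{T},\theta}\|_{L^p(w_B)}^2\Big)^{p/2} \lesssim k^{\,p/2 - 1}\sum_\theta \|Eg_{\mathbb{T},\theta}\|_{L^p(w_B)}^p,
\end{equation*}
sum over the disjoint balls $B \subset U_k$, and then use the $L^p$-orthogonality of parallel wave packets (they occupy essentially disjoint physical tubes) to bound $\sum_\theta \|Eg_{\mathbb{T},\theta}\|_{L^p}^p \lesssim \sum_{T\in\mathbb{T}} \|Eg_T\|_{L^p}^p$. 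A direct computation, using that $|Eg_T| \sim \|g_T\|_2/R^{(n-1)/4}$ on $T$ and $|T| \sim R^{(n+1)/2}$, shows $\|Eg_T\|_p \sim \|g_T\|_2$ at the critical exponent $p$.

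At this point one has $\|Eg_{\mathbb{T}}\|_{L^p(U_k)}^p \lessapprox k^{p/2 - 1}\sum_T \|g_T\|_2^p$. Inserting the hypothesis that $\|g_T\|_2$ is essentially constant in $T\in\mathbb{T}$ (so that $\sum_T \|g_T\|_2^p \sim (\#\mathbb{T})^{1-p/2}\big(\sum_T \|g_T\|_2^2\big)^{p/2}$), taking $p$th roots, and using $\tfrac{1}{2}-\tfrac{1}{p} = \tfrac{1}{n+1}$ yields the claimed bound, with the equivalence to $\|g_\mathbb{T}\|_2$ following from wave packet orthogonality \eqref{eq:wp2}. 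The main technical obstacle is not the main line of the argument -- which really is just decoupling plus a pigeonhole -- but the bookkeeping of the rapidly decaying tails: one must verify that each localisation (restricting to $B$, then to $U_k$, and using the weighted $L^p(w_B)$ norms that decoupling naturally produces) only introduces ${\rm RapDec}_\epsilon(R)$ errors compatible with the $R^\epsilon$ loss one is willing to absorb.
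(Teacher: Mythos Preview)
Your argument has a genuine gap at the very first step. You claim that Bourgain--Demeter decoupling gives
\[
\|Eg_{\mathbb{T}}\|_{L^p(B)} \leq C_\epsilon R^{\epsilon/2}\Big(\sum_{\theta \in \Theta_R} \|Eg_{\mathbb{T},\theta}\|_{L^p(w_B)}^2\Big)^{1/2}
\]
for each $R^{1/2}$-ball $B$ with the sum over $R^{-1/2}$-caps $\theta$. This is false in general. The $\ell^2$-decoupling theorem into $R^{-1/2}$-caps requires integration over $R$-balls: if you localise $Eg_{\mathbb{T}}$ to an $R^{1/2}$-ball by multiplying by a bump $\phi_B$, the Fourier support spreads by $R^{-1/2}$, and the resulting function lies only in the $R^{-1/2}$-neighbourhood of $\Sigma$, so Bourgain--Demeter yields decoupling into the coarser $R^{-1/4}$-caps. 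A concrete counterexample is $g\equiv 1$ with $B$ centred at the origin: then $|Eg_\theta|\sim R^{-(n-1)/2}$ on $B$ for every $\theta$, so the right-hand side is $\sim R^{-(n-1)/(4(n+1))}$, while $|Eg(0)|\sim 1$ forces the left-hand side to be $\gtrsim 1$. The inequality therefore fails by a factor $R^{(n-1)/(4(n+1))}$, not merely $R^\epsilon$.

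The paper itself does not prove Theorem~\ref{theorem: refined Strichartz} from first principles; it quotes the result from \cite{GIOW20} and shows only that it is equivalent to the weighted inequality of Theorem~\ref{theorem: all weights refined}. The actual proof in \cite{GIOW20} is not the one-shot argument you propose but a multi-scale iteration: one fixes a small parameter $K$ and decouples in roughly $\log R/\log K$ stages, passing at each stage from $r^{-1/2}$-caps to $(Kr)^{-1/2}$-caps on balls of the appropriate intermediate radius, while tracking how the multiplicity $k$ distributes across scales. It is precisely this iteration that converts the global decoupling constant into the refined factor $(k/\#\mathbb{T})^{1/(n+1)}$; the geometric observation you isolate (that only $\sim k$ tubes meet each $R^{1/2}$-ball) is correct and essential, but it must be fed into the induction rather than into a single application of decoupling at the finest scale.
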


Since $k\leq \#\mathbb{T}$, estimate \eqref{eq: improved ST} provides an improvement on the classical Stein--Tomas inequality 
\begin{equation*}
    \|Eg_{\mathbb{T}}\|_{L^{\frac{2(n+1)}{{n-1}}}(\mathbb{R}^n)}\lesssim \|g_{\mathbb{T}}\|_2
\end{equation*}
on the `$k$-rich' sets $U_k$ in $B_R$, according to their level $k$ of richness. 

If we assume Theorem \ref{theorem: all weights refined}, we can immediately deduce Theorem~\ref{theorem: refined Strichartz} by testing on a weight $w \in L^{\frac{n+1}{2}}(U_k)$. Indeed, under the hypotheses of \ref{theorem: refined Strichartz}, we apply Theorem \ref{theorem: all weights refined} and we have
\begin{eqnarray*}
\begin{aligned}
        \int_{B_R} |Eg_\mathbb{T}|^2w \leq &C_\epsilon R^\epsilon \left( \sum_{T \in \mathbb{T}} \Big[\sum_{B\in\mathcal{B}:\;B\cap T\neq\emptyset}w^{\frac{n+1}{2}}(B) \Big]\|g_T\|_2^2\right)^{\frac{2}{n+1}} \|g_\mathbb{T}\|_2^{\frac{2(n-1)}{n+1}} \\
        + &{\rm RapDec}_\epsilon(R)\|w\|_\infty \int |g_{\mathbb{T}}|^2
\end{aligned}
\end{eqnarray*}
and, suppressing the error term (as we may) and letting $\lambda = \|g_\mathbb{T}\|_2^2/\# \mathbb{T}$ denote the common value of $\|g_T\|_2^2$, the right hand side here equals 
$$ C_\epsilon R^\epsilon \lambda^{\frac{2}{n+1}} \left( \sum_{T \in \mathbb{T}} \sum_{B\in\mathcal{B}:\;B\cap T\neq\emptyset}w^{\frac{n+1}{2}}(B) \right)^{\frac{2}{n+1}} \|g_\mathbb{T}\|_2^{\frac{2(n-1)}{n+1}}$$ 
$$ \sim C_\epsilon R^\epsilon(\lambda k)^{\frac{2}{n+1}}\left( \sum_{B\in\mathcal{B}} w^{\frac{n+1}{2}}(B)\right)^{\frac{2}{n+1}} \|g_\mathbb{T}\|_2^{\frac{2(n-1)}{n+1}}$$ 
$$ \sim C_\epsilon R^\epsilon\left(\frac{k}{\#\mathbb{T}}\right)^{\frac{2}{n+1}}\|w\|_{\frac{n+1}{2}} \|g_\mathbb{T}\|_2^2,$$
as needed to verify Theorem~\ref{theorem: refined Strichartz}.

Likewise, Theorem \ref{theorem: all weights refined} will in turn follow from \eqref{eq: improved ST}, as the following simple argument shows.

\begin{proof}[Proof of Theorem \ref{theorem: all weights refined}] Let $\epsilon>0$, fix $g\in L^2(B^{n-1})$, $w:B_R\rightarrow [0,+\infty)$ and $\mathbb{T}\subset\mathbb{T}_\epsilon(B_R)$.

In order to prove \eqref{eq: all weights refined}, we may assume that: 
\begin{enumerate}[(a)]
    \item $w$ is supported in $\bigcup_{T\in\mathbb{T}}T$.
    \item $\|g_T\|_2 \sim 1$ for all $T \in \mathbb{T}$.
\end{enumerate}

Indeed, assumption (a) is possible because, by \eqref{eq:wp3}, the part of the weight supported outside $\bigcup_{T\in\mathbb{T}}T$ contributes at most ${\rm RapDec}_\epsilon(R)\|w\|_\infty\int |g_\mathbb{T}|^2$ to $\int_{B_R}|Eg_{\mathbb{T}}|^2 w$. For (b), observe that, in terms of our goal, it is trivial to control the contributions of the wave packets $g_T$ with $\|g_T\|_2<R^{-100n}\|g\|_2$. So, by dyadic pigeonholing, it suffices to prove \eqref{eq: all weights refined} under the additional assumption that the $g_T$ have roughly the same $L^2$ norms over all $T\in\mathbb{T}$. By scaling we may assume this common value is $1$.

We now fix a family $\mathcal{B}$ of boundedly overlapping $R^{1/2}$-balls covering $B_R$. By the above it suffices to prove that
\begin{equation}\label{eq: special case}
    \int|Eg_\mathbb{T}|^2 w\lessapprox \left(\frac{1}{\#\mathbb{T}}\sum_{T\in\mathbb{T}}\;\;\sum_{B\in\mathcal{B}:\;B\cap T\neq \emptyset} w^{\frac{n+1}{2}}(B)\right)^{\frac{2}{n+1}}\int |g_\mathbb{T}|^2
\end{equation}
under assumptions (a) and (b).

Let $U_k$ be the union of the balls in this family which meet $\sim k$ members of $\mathbb{T}$.

Importantly, (a) ensures that there exists some dyadic $k\in\mathbb{N}$ for which
\begin{equation*}
    \int_{B_R}|Eg_\mathbb{T}|^2 w \approx \int_{U_k}|Eg_\mathbb{T}|^2 w,
\end{equation*}
So by H\"older's inequality and \eqref{eq: improved ST} we obtain
\begin{eqnarray*}
\begin{aligned}
    \int_{B_R}|Eg_\mathbb{T}|^2 w &\lessapprox \left(\int_{U_k}|Eg_\mathbb{T}|^\frac{2(n+1)}{n-1}\right)^{\frac{n-1}{n+1}}\; (w^{\frac{n+1}{2}}(U_k))^{\frac{2}{n+1}}\\
    &\leq C_\epsilon R^\epsilon \left(\frac{k}{\#\mathbb{T}}\;w^{\frac{n+1}{2}}(U_k)\right)^{\frac{2}{n+1}}\int |g_\mathbb{T}|^2\\
    & \sim C_\epsilon R^\epsilon \left(k\;w^{\frac{n+1}{2}}(U_k)\right)^{\frac{2}{n+1}} (\# \mathbb{T})^{\frac{n-1}{n+1}}.
\end{aligned}
\end{eqnarray*}
We conclude with a simple counting argument. Indeed, let $\mathcal{B}_k$ be the set of $R^{1/2}$-balls comprising $U_k$. Then, 
\begin{eqnarray*}
\begin{aligned}
    k \; w^{\frac{n+1}{2}}(U_k)&\sim\sum_{B\in\mathcal{B}_k}w^{\frac{n+1}{2}}(B) \; k\\
    & \sim \sum_{B\in\mathcal{B}_k}\;\;\sum_{T\in \mathbb{T}:\; T\cap B\neq \emptyset}w^{\frac{n+1}{2}}(B)\\
    &=\sum_{T\in\mathbb{T}}\;\;\sum_{B\in\mathcal{B}_k:\;B\cap T\neq \emptyset} w^{\frac{n+1}{2}}(B),
  \end{aligned}
\end{eqnarray*}
establishing \eqref{eq: special case} and thus \eqref{eq: all weights refined}.
\end{proof}

%%%%%%%%%%%%%%%%%%%%%%%%%%%%%%%%%%%%%%%%%%%%%%%%%%%%%%
%%%%%%%%%%%%%%%%%%%%%%%%%%%%%%%%%%%%%%%%%%%%%%%%%
%%%%%%%%%%%%%%%%%%%%%%%%%%%%%%%%%%%%%%%%%%%%%%%%%%%%%%%

\section{Improved Mizohata--Takeuchi estimates for small caps}\label{section: small caps}

In this section we prove Lemma \ref{lemma: small caps}, which will be key to the proofs of Theorems \ref{theorem: hor slabs refined} and \ref{theorem: all slabs refined}.
It is a Mizohata--Takeuchi-type estimate which holds for functions supported in small caps, and it represents an improvement over what we can obtain under no support hypothesis. 

Towards proving the lemma, we may assume as in Section~\ref{section: preliminaries} that all normals to $\Sigma$ have angle at most $1/100$ from the vertical direction, and that the projection of $\Sigma$ on the hyperplane $\mathbb{R}^{n-1}\times \{0\}$ is contained in the unit ball $B^{n-1}$ centred at 0. It thus suffices to establish the analogous statement (Lemma~\ref{lemma: small caps'} below) with $Eg_\tau$ in place of $\widehat{g_\tau d\sigma}$, where $E$ is the extension operator associated to $\Sigma$ and $g_\tau\in L^2(B^{n-1})$ is a function supported in a $\rho^{-1/2}$-cap $\tau$ in $B^{n-1}$. 

To simplify notation, for $E\subset B^{n-1}$ (rather than $E\subset \Sigma$), and any line $\ell$ (or tube $T$ in $B_R$), we write $\ell\perp E$ if $\ell\perp \Sigma(E)$ (similarly, we write $T\perp E$ if $T\perp \Sigma(E)$). We also define
\begin{equation*}
    A_{\rho,R,E}(w):=A_{\rho,R,\Sigma(E)}(w).
\end{equation*}  

\begin{lemma}\label{lemma: small caps'} For every $\epsilon>0$, there exists $C_\epsilon>0$ such that
for all weights $w:\mathbb{R}^n\rightarrow [0,+\infty)$, whenever $1\leq\rho\leq R$, $\tau$ is a $\rho^{-1/2}$-cap in $B^{n-1}$ and $g_\tau\in L^2(B^{n-1})$ is supported in $\tau$, we have
\begin{equation*}
    \int_{B_R}|Eg_\tau|^2 w \leq C_\epsilon R^\epsilon\;  A_{\rho,R, \;{\rm supp}\;g_\tau}(w) \int |g_\tau|^2,
\end{equation*}
and therefore also
\begin{equation*}
    \int_{B_R}|Eg_\tau|^2 w \leq C_\epsilon R^\epsilon\; \left(\frac{R}{\rho}\right)^{\frac{n-1}{n+1}}\sup_{\ell\perp {\rm supp}\;g_\tau} Xw(\ell)  \int |g_\tau|^2.
\end{equation*}
\end{lemma}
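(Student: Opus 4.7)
The plan is to reduce the lemma to Corollary \ref{cor: all weights refined} at scale $R$ after first replacing $w$ by a $\rho^{1/2}$-tube-averaged version of it. This replacement will be permitted because $g_\tau$, being supported in the $\rho^{-1/2}$-cap $\tau$, forces $|Eg_\tau|^2$ to be essentially constant along $\rho^{1/2}$-tubes parallel to $N(\tau)$, the normal to $\Sigma$ at the centre of $\tau$; this is precisely what Proposition \ref{prop: uncertainty principle} provides at scale $\rho$.

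Concretely, fix an essentially disjoint tessellation $\{S_\rho\}$ of $\mathbb{R}^n$ by $\rho^{1/2}$-tubes of length $\rho$ parallel to $N(\tau)$, and set
\begin{equation*}
    \widetilde{w}(x) := \sum_{S_\rho} \frac{w(S_\rho)}{|S_\rho|}\, \chi_{2 S_\rho}(x).
\end{equation*}
For each $S_\rho$, applying Proposition \ref{prop: uncertainty principle} with $\widetilde{T} = S_\rho$ and a $\rho$-ball $B \supset S_\rho$ gives
\begin{equation*}
    \sup_{x \in S_\rho} |Eg_\tau(x)|^2 \lesssim \rho^\delta \frac{1}{|S_\rho|} \int_{2 S_\rho} |Eg_\tau|^2 + {\rm RapDec}_\epsilon(R) \int |g_\tau|^2,
\end{equation*}
and multiplying by $w(S_\rho)$ and summing over the tessellation yields $\int_{B_R} |Eg_\tau|^2 w \lesssim \rho^\delta \int |Eg_\tau|^2 \widetilde{w}$ up to an acceptable error.

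Because $g_\tau$ is supported in $\tau$, its wave packets at scale $R$ lie in $R^{1/2}$-tubes whose directions are within angle $\sim\rho^{-1/2}$ of $N(\tau)$, so every such tube is perpendicular to ${\rm supp}\;g_\tau$. Applying Corollary \ref{cor: all weights refined} at scale $R$ to $Eg_\tau$ with weight $\widetilde{w}$ in place of $w$ will therefore give
\begin{equation*}
    \int |Eg_\tau|^2 \widetilde{w} \lessapprox \sup_{T_R \perp {\rm supp}\;g_\tau} \left(\int_{2 T_R} \widetilde{w}^{\frac{n+1}{2}}\right)^{\frac{2}{n+1}} \int |g_\tau|^2.
\end{equation*}
For any such $T_R$, parallel to $N(\tau)$, the tubes $S_\rho$ meeting $2T_R$ are exactly those contained in a slight fattening of $T_R$; combined with the constancy of $\widetilde{w}$ on each $2 S_\rho$ and the identity $|S_\rho| = \rho^{(n+1)/2}$, this gives
\begin{equation*}
    \int_{2 T_R} \widetilde{w}^{\frac{n+1}{2}} \lesssim \sum_{S_\rho \subset 2 T_R} \left(\frac{w(S_\rho)}{|S_\rho|}\right)^{\frac{n+1}{2}} |S_\rho| = \frac{1}{\rho^{\frac{(n+1)(n-1)}{4}}} \sum_{S_\rho \subset 2 T_R} w(S_\rho)^{\frac{n+1}{2}}.
\end{equation*}
Raising to the power $\tfrac{2}{n+1}$ turns the prefactor into $\rho^{-(n-1)/2}$, so taking the supremum over $T_R$ produces $A_{\rho, R, {\rm supp}\;g_\tau}(w)$ up to a fixed constant. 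Chaining the three estimates yields the first inequality of the lemma, and \eqref{eq:MTsmallcaps} then follows from \eqref{eq: comparing} exactly as in the paragraph preceding the statement.

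I expect the main delicate point to be the local-constancy step: one has to choose the $\rho$-balls and dual tubes coherently across the tessellation and verify that the various rapid-decay errors from Proposition \ref{prop: uncertainty principle} aggregate into a single acceptable error term. Given the machinery of Section \ref{section: preliminaries}, however, this is routine and I do not foresee a serious obstacle beyond careful bookkeeping.
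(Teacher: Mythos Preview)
Your proposal is correct and follows essentially the same route as the paper's proof: use the local-constancy Proposition~\ref{prop: uncertainty principle} at scale $\rho$ to replace $w$ by a weight $\widetilde{w}$ that is constant on $\rho^{1/2}$-tubes in the direction $N(\tau)$, then apply Theorem~\ref{theorem: all weights}/Corollary~\ref{cor: all weights refined} at scale $R$ to $\widetilde{w}$, and finally read off $A_{\rho,R,{\rm supp}\,g_\tau}(w)$ from $\bigl(\int_{T_R}\widetilde{w}^{(n+1)/2}\bigr)^{2/(n+1)}$. The only difference is cosmetic: the paper first pigeonholes to reduce to $w$ an indicator function and then to $w(S_\rho)/|S_\rho|\sim\lambda$ constant across $S_\rho$, so that its $\widetilde{w}$ is essentially a characteristic function and the final computation reduces to counting $S_\rho$'s in $T_R$; you instead keep the general weight and use bounded overlap of the $2S_\rho$'s to pass from $\widetilde{w}^{(n+1)/2}$ to $\sum_{S_\rho}(w(S_\rho)/|S_\rho|)^{(n+1)/2}\chi_{2S_\rho}$ directly, which saves two $\log R$ losses at the cost of one extra line of explanation.
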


Notice that the tubes and lines featuring here have directions perpendicular to the support of $g_\tau$.

\begin{proof} Let $\epsilon>0$ and $R\geq 1$. For $\rho\lesssim R^\epsilon$, the conclusion of the lemma follows directly from Theorem \ref{theorem: all weights}. We therefore consider $\rho\gtrsim R^\epsilon$. 

In order to prove the lemma for arbitrary weights, it suffices by dyadic pigeonholing to prove it for weights that are indicator functions. Indeed, first observe that we may assume that $w(x)\geq R^{-2n}\|w\|_\infty$ for all $x\in {\rm supp}\;w$. Therefore, after a dyadic pigeonholing causing losses of $\sim \log R$, we may assume that $w(x)\sim q$ for some fixed $q>0$ over all $x\in {\rm supp}\;w$; and hence that $w$ is an indicator function, due to the scaling properties of our desired estimate.

So, let $w$ be an indicator function of a non-empty union of unit balls. Fix a $\rho^{-1/2}$-cap $\tau$, and let $g$ be a function supported in $\tau$. Let $\mathbb{T}$ be a family of boundedly overlapping parallel $\rho^{1/2}$-tubes that cover ${\rm supp}\;w$, and point in some direction $N$ normal to ${\rm supp}\;g$; observe that $\mathbb{T}\subset\mathbb{T}_\rho$. At a cost of a $\log R$-loss, it may be further assumed that
\begin{equation*}
    \frac{w(S_\rho)}{|S_\rho|}\sim \lambda\text{ for all }S_\rho\in\mathbb{T}
\end{equation*}
for some $\lambda\leq 1$, hence
\begin{eqnarray*}
\begin{aligned}
    A_{\rho,R,\;{\rm supp}\;g}(w)&=\sup_{T_R\in\mathbb{T}_R:\;T_R\perp {\rm supp}\;g}\left(\sum_{S_\rho\subset T_R}\left(\frac{w(S_\rho)}{|S_\rho|}\right)^{\frac{n+1}{2}}|S_\rho|\right)^{\frac{2}{n+1}}\\
    &\sim  \lambda \rho \sup_{T_R\in \mathbb{T}_R:\; T_R\perp {\rm supp}\;g} \#\{S_\rho \in \mathbb{T} \, : \, S_\rho \cap T_R\neq\emptyset\}^{\frac{2}{n+1}}.
\end{aligned}
\end{eqnarray*}

It therefore suffices to prove that 
\begin{equation*}
    \int |Eg|^2 w 
   \leq C_\epsilon R^\epsilon \lambda \rho \sup_{T_R\in \mathbb{T}_R:\; T_R\perp {\rm supp}\;g} \#\{S_\rho \in \mathbb{T} \, : \, S_\rho \subset T_R\}^{\frac{2}{n+1}} \int |g|^2.
\end{equation*}
Proposition \ref{prop: uncertainty principle} ensures that, roughly speaking, $|Eg|$ is constant on each $S_\rho\in\mathbb{T}$. In particular, let $\mathbb{T}_N$ be a set of boundedly overlapping tubes in direction $N$, of width $\rho^{1/2+\delta}$ and length $\rho$, that cover $B_R$. For each $S_\rho\in\mathbb{T}$, fix $\widetilde{S}_\rho\in\mathbb{T}_N$ that intersects $S_\rho$. By Proposition \ref{prop: uncertainty principle},
\begin{align*}
    \int_{S_\rho} |Eg|^2 w&\lesssim \frac{w(S_\rho)}{|S_\rho|}\int_{2\widetilde{S}_\rho}|Eg|^2 +{\rm RapDec}_\epsilon (R) \int |g|^2\\
    &\sim \lambda \int_{2\widetilde{S}_\rho}|Eg|^2 +{\rm RapDec}_\epsilon (R) \int |g|^2.
\end{align*}
By adding over all $S_\rho\in\mathbb{T}$, we obtain
\begin{equation*}
    \int |Eg|^2 w \lesssim \lambda \int |Eg|^2 \widetilde{w} +{\rm RapDec}_\epsilon (R) \int |g|^2,
\end{equation*}
where
\begin{equation*}
    \widetilde{w}:=\sum_{S_\rho\in\mathbb{T}}\chi_{2\widetilde{S}_\rho}.
\end{equation*}
Now by Theorem~\ref{theorem: all weights} we have
\begin{equation*}\int|Eg|^2 \tilde{w} \lessapprox \sup_{T_R \in \mathbb{T}_R: \; T_R \perp {\rm supp}\; g} \tilde{w}(T_R)^\frac{2}{n+1} \int |g|^2,
\end{equation*}
and for $T_R \in \mathbb{T}_R$ with $T_R \perp {\rm supp}\; g$ we have
\begin{equation*}\tilde{w}(T_R) \lesssim \rho^{\frac{n+1}{2}} \# \{S_\rho \in \mathbb{T} \, : \, 2S_\rho \cap T_R\neq\varnothing\}.
\end{equation*}
Therefore,
\begin{equation*}
\int|Eg|^2 w \lessapprox \lambda \left( \rho^{\frac{n+1}{2}} \sup_{T_R \in \mathbb{T}_R: \; T_R \perp {\rm supp}\; g} \# \{S_\rho \in \mathbb{T} \, : \, S_\rho \subset T_R\}\right)^{\frac{2}{n+1}}
\int |g|^2,
\end{equation*}
as required.
\end{proof}

%%%%%%%%%%%%%%%%%%%%%%%%%%%%%%%%%%%%%%%%%%%%%%%%%%%%%%%%%%%%%%%%%%%%%%%%%%%%%%%%%%%%%%%%%%%%%%%%%%%%%%%%%%%%%%%%%%%%%%%%%%%%%%%%%%%%%%%%%%%%%%%%%%%%%%%%%%%%%%%%%%%%%%%%%%%%%%%%%%%%%%%%%%%

\section{Weights constant on slabs: Theorems \ref{theorem: hor slabs} and \ref{theorem: all slabs}}\label{section: slabs}

In this section we will use the favourable estimates for functions $g_\tau$ supported in small caps which were established in Section~\ref{section: small caps} to obtain Mizohata--Takeuchi estimates which improve on Theorem~\ref{theorem: all weights} for general functions $g$ and weights possessing a certain measure of local constancy. In particular, recall from \eqref{eq:MTsmallcaps} that if a function $g_\tau$ is supported in a $\rho^{-1/2}$-cap $\tau$, then the Mizohata--Takeuchi conjecture holds for $g_\tau$ with an improved $(R/\rho)^{(n-1)(n+1)}$-loss. Therefore, for any fixed $g\in L^2(B^{n-1})$ and $w:\mathbb{R}^n\rightarrow [0,+\infty)$, a decoupling inequality of the form
\begin{equation*}
    \int_{B_R}|Eg|^2w \lessapprox \sum_{\tau}\int_{B_R}|Eg_\tau|^2w
\end{equation*}
for a boundedly overlapping collection of $\rho^{-1/2}$-caps $\tau$ (where $g=\sum_{\tau}g_\tau$ and ${\rm supp}\;g_{\tau}\subset\tau$) would directly imply that Mizohata--Takeuchi holds for $g$ with the inherited loss $(R/\rho)^{(n-1)(n+1)}$. The smaller the caps we manage to decouple into, the smaller the loss.

In general, it is not possible to decouple into small caps. However, we can indeed decouple into $\rho^{-1/2}$-caps when $w$ is a weight of the form $\sum_{s\in\mathcal{S}}c_s\chi_s$, where $\mathcal{S}$ is a set of disjoint $\rho^{1/2}$-slabs that are $\nu$-parallel to $\Sigma$; more precisely, we show that \eqref{eq: decoupling} below holds. This yields Mizohata--Takeuchi for such weights with an $(R/\rho)^{(n-1)(n+1)}$-loss. If the slabs in $\mathcal{S}$ are allowed to point in any direction, then we can decouple into larger $\rho^{-1/4}$-caps \eqref{eq: decoupling'}, inheriting Mizohata--Takeuchi with an $(R/\rho^{1/2})^{(n-1)(n+1)}$-loss.

These results are given in Theorems \ref{theorem: hor slabs refined} and \ref{theorem: all slabs refined} below, which are more precise versions of Theorems \ref{theorem: hor slabs} and \ref{theorem: all slabs}, respectively. As per the above discussion, the new ingredients here are the decoupling inequalities \eqref{eq: decoupling} and \eqref{eq: decoupling'} which follow. Note that, as in Section~\ref{section: small caps}, we will be working with the extension operator $E$ associated to $\Sigma$ (rather than with $\widehat{\cdot\;d\sigma}$). When $E\subset B^{n-1}$, we will be using the simpler the notation $A_{\rho,R, E}(w)$ in place of $A_{\rho,R,\Sigma(E)}(w)$, and $\ell\perp E$ (or $T\perp E$) to mean $\ell\perp \Sigma(E)$ (similarly, $T\perp \Sigma(E)$) for any line $\ell$ and tube $T$ in $\mathbb{R}^n$.

\begin{theorem}\textbf{\emph{(Roughly horizontal slabs)}}\label{theorem: hor slabs refined} Fix $\nu>0$ and $\epsilon>0$. For $1\leq\rho\leq R$, let $w:\mathbb{R}^n\rightarrow [0,+\infty)$ be a weight of the form $\sum_{s\in\mathcal{S}}c_s\chi_s$, where $\mathcal{S}$ is a set of disjoint $\rho^{1/2}$-slabs $\nu$-parallel to $\Sigma$, and let 
$w^\star:=\sum_{s\in\mathcal{S}}c_s\chi_{3s}$. For $g\in L^2(B^{n-1})$, write
\begin{equation*}
    g=\sum_{\tau\in\mathfrak{T}}g_\tau,\quad{{\rm supp}\;g_\tau\subset\tau},
\end{equation*}
where $\mathfrak{T}$ is a family of boundedly overlapping $\rho^{-1/2}$-caps $\tau$ in $B^{n-1}$. Then the decoupling inequality
\begin{equation}\label{eq: decoupling}
    \int_{B_R}|Eg|^2w \lessapprox_\nu \sum_{\tau\in\mathfrak{T}}\int_{B_R}|Eg_\tau|^2w^\star + {\rm RapDec}_\epsilon(R)\int |g|^2
\end{equation}
holds. Consequently we have 
\begin{eqnarray}
\begin{aligned}\label{eq: final}
    \int_{B_R} |Eg|^2w&\leq C_{\epsilon,\nu}R^\epsilon  \sum_{\tau\in\mathfrak{T}}A_{\rho, R, \; {\rm supp}\; g_\tau}(w)\int |g_\tau|^2\\
    &\lessapprox_\nu \left(\frac{R}{\rho}\right)^{\frac{n-1}{n+1}} \sum_{\tau\in\mathfrak{T}} \; \sup_{\ell\perp {\rm supp}\; g_\tau}Xw(\ell)\int |g_\tau|^2.
\end{aligned}
\end{eqnarray}
\end{theorem}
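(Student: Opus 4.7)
The plan is to split the theorem into the decoupling inequality \eqref{eq: decoupling} and its consequence \eqref{eq: final}, and prove them in that order. The substance is in \eqref{eq: decoupling}; once that is in hand, \eqref{eq: final} follows quickly by invoking Lemma \ref{lemma: small caps}.

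\textbf{From \eqref{eq: decoupling} to \eqref{eq: final}.} Assuming \eqref{eq: decoupling}, I would apply Lemma \ref{lemma: small caps} to each summand $\int_{B_R}|Eg_\tau|^2 w^\star$ and obtain
\[ \int_{B_R}|Eg|^2 w \lessapprox \sum_\tau A_{\rho, R, {\rm supp}\,g_\tau}(w^\star)\int|g_\tau|^2. \]
The dilated slabs $3s$ ($s\in\mathcal{S}$) form a $\nu$-dependent bounded-overlap family (the $\nu$-parallel condition forces the slab normals to lie in a $(\pi/2-\nu)$-cone about the normals to $\Sigma$, so the 3-fold expansion introduces only $O_\nu(1)$-overlap), yielding $A_{\rho,R,E}(w^\star)\lesssim_\nu A_{\rho,R,E}(w)$ and thus the first inequality in \eqref{eq: final}. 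The second inequality is the standard bound \eqref{eq: comparing}.

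\textbf{Plan for the decoupling.} For each $s \in \mathcal{S}$ I would pick a smooth cutoff $\psi_s$ with $\chi_s \leq |\psi_s|^2 \lesssim \chi_{3s}$ whose Fourier transform is essentially concentrated in the dual slab $s^\ast$: a tube of dimensions $\rho^{-1/2}$ in the $(n-1)$-dimensional hyperplane tangent to $s$ and $O(1)$ along the slab-normal direction $e_s$. The goal is to establish, for each $s$,
\[ \int|Eg|^2 |\psi_s|^2 \lesssim_\nu \sum_\tau \int|Eg_\tau|^2 |\psi_s|^2 + {\rm RapDec}_\epsilon(R) \int|g|^2, \]
since summing this against $c_s$ and using $|\psi_s|^2 \lesssim \chi_{3s}$ then yields \eqref{eq: decoupling}. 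Applying Proposition \ref{prop: Fourier localisation} on a $\rho$-ball $B$ containing $3s$ (possible since $|3s|\ll|B|$) produces $Eg_\tau \cdot \psi_s = G_\tau \psi_s + {\rm RapDec}_\epsilon(R)\|g_\tau\|_2$ with $\widehat{G_\tau}$ supported in $S(C\tau)$; hence $\widehat{G_\tau\psi_s}$ is essentially supported in the Minkowski sum $S(C\tau) + s^\ast$.

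\textbf{Main obstacle.} The crux is a Plancherel-type near-orthogonality for the $G_\tau\psi_s$'s. The $\nu$-parallel hypothesis forces $e_s$ to make angle $\geq \nu$ with the tangent plane of $\Sigma$, so the $O(1)$-extent of $s^\ast$ along $e_s$ is transverse to the tangent directions of $\Sigma$ in which the distinct caps $\Sigma(\tau)$ are $\rho^{-1/2}$-separated. Nevertheless, for small $\nu$ the Minkowski sums $S(C\tau) + s^\ast$ can have significant overlap, so a naive support-disjointness count would yield only polynomial-in-$\rho$ loss. The key point to exploit is that the mass of $\widehat{G_\tau\psi_s}$ remains concentrated near the base set $S(C\tau)$, with rapid decay into the tails induced by $\widehat{\psi_s}$, and that the base sets $S(C\tau)$ for distinct $\tau \in \mathfrak{T}$ are themselves well-separated; it is this concentration, rather than the cruder support information, that should give an almost-orthogonality bound with only a $\nu$-dependent (and $R^\epsilon$-absorbable) constant. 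Making this concentration-versus-separation argument precise — carefully tracking how the geometry of $s^\ast$ interacts with the lifted caps under the $\nu$-parallel constraint — will be the principal technical step of the proof.
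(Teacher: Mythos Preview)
Your derivation of \eqref{eq: final} from the decoupling inequality via Lemma~\ref{lemma: small caps} is correct and matches the paper (the paper passes from $w^\star$ back to $w$ by writing $w^\star$ as a sum of $O(1)$ translates of $w$ rather than invoking bounded overlap of the $3s$, but either device works).

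The gap is in your plan for \eqref{eq: decoupling}. The setup with the cutoff $\psi_s$ and the Fourier supports $S(C\tau)+s^\ast$ is fine, but you have misdiagnosed the obstacle. You assert that the Minkowski sums $S(C\tau)+s^\ast$ overlap enough to incur polynomial-in-$\rho$ loss, and you propose to repair this with an unspecified ``concentration-versus-separation'' argument. In fact the overlap multiplicity is $O_\nu(R^\epsilon)$, and establishing precisely this bound \emph{is} the whole proof; no concentration argument is needed, nor would yours work as stated, since $\widehat{G_\tau\psi_s}=\widehat{G_\tau}\ast\widehat{\psi_s}$ genuinely spreads mass throughout $S(C\tau)+s^\ast$ rather than concentrating it near $S(C\tau)$.

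The paper's route is a direct bilinear expansion: with $\eta_s$ a bump adapted to $s$, one writes $\int_s|Eg|^2\leq\sum_{\tau,\tau'}\int(g_\tau d\sigma)\ast(\widetilde{g_{\tau'}d\sigma})\,\widehat{\eta_s}$; since the convolution is supported in $\tau-\tau'$ and $\widehat{\eta_s}$ is essentially supported in $s^\ast$, only pairs with $(\tau-\tau')\cap s^\ast\neq\varnothing$ survive. The quantity $N_2=\max_\tau\#\{\tau':(\tau-\tau')\cap s^\ast\neq\varnothing\}$, which is exactly the overlap multiplicity of your Minkowski sums, is then bounded by a clean geometric count: the $\nu$-parallel hypothesis forces the axis of $s^\ast$ to make angle $\geq\nu$ with every tangent direction of $\Sigma$, so $\Sigma\cap(\tau-s^\ast)$ is confined to a single $\sim_\nu\rho^{-1/2+\delta}$-cap, giving $N_2\lesssim_\nu R^\epsilon$. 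The $\nu$-dependence of the constant is exactly what the theorem permits.
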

Note that an immediate consequence of \eqref{eq: final} is 
\begin{eqnarray*}
\begin{aligned}
 \int_{B_R} |Eg|^2w&\leq C_{\epsilon,\nu}R^\epsilon  A_{\rho, R, \; {\rm supp}\; g}(w)\int |g|^2\\
    &\lessapprox_\nu \left(\frac{R}{\rho}\right)^{\frac{n-1}{n+1}} \sup_{\ell\perp {\rm supp}\; g}Xw(\ell)\int |g|^2.
\end{aligned}
\end{eqnarray*}

\vspace{0.1in}

\begin{theorem}\textbf{\emph{(All slabs)}}\label{theorem: all slabs refined} Fix $\epsilon>0$. For $1\leq \rho \leq R$, let $w:\mathbb{R}^n\rightarrow [0,+\infty)$ be a weight of the form $\sum_{s\in\mathcal{S}}c_s\chi_s$, where $\mathcal{S}$ is a set of disjoint $\rho^{1/2}$-slabs. Let $w^\star:=\sum_{s\in\mathcal{S}}c_s\chi_{3s}$. For $g\in L^2(B^{n-1})$, write
\begin{equation*}
    g=\sum_{\widetilde{\tau}\in\widetilde{\mathfrak{T}}}g_{\widetilde{\tau}},\quad{{\rm supp}\;g_{\widetilde{\tau}}\subset\widetilde{\tau}}
\end{equation*}
where $\widetilde{\mathfrak{T}}$ is a family of finitely overlapping $\rho^{-1/4}$-caps $\widetilde{\tau}$ in $B^{n-1}$. Then the decoupling inequality
\begin{equation}\label{eq: decoupling'}
    \int_{B_R}|Eg|^2w \lessapprox \sum_{\widetilde{\tau}\in\widetilde{\mathfrak{T}}}\int_{B_R}|Eg_{\widetilde{\tau}}|^2w^\star + {\rm RapDec}_\epsilon(R)\int |g|^2
\end{equation}
holds. Consequently we have,
\begin{eqnarray}
\begin{aligned}\label{eq: final'}
    \int_{B_R} |Eg|^2w&\leq C_{\epsilon}R^\epsilon  \sum_{\widetilde{\tau}\in\widetilde{\mathfrak{T}}}A_{\rho^{1/2},R,\; \rm{supp}\,g_{\widetilde{\tau}}}(w)\int |g_{\widetilde{\tau}}|^2\\
    &\lessapprox \left(\frac{R}{\rho^{1/2}}\right)^{\frac{n-1}{n+1}} \sum_{\widetilde{\tau}\in\widetilde{\mathfrak{T}}} \; \sup_{\ell\perp {\rm supp}\; g_{\widetilde{\tau}}}Xw(\ell)\int |g_{\widetilde{\tau}}|^2.
\end{aligned}
\end{eqnarray}

\end{theorem}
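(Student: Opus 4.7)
The plan is to prove the decoupling inequality \eqref{eq: decoupling'} first -- this is the new ingredient -- and then derive \eqref{eq: final'} from it by applying Lemma \ref{lemma: small caps'} to each $g_{\widetilde\tau}$ and summing, which is essentially routine.

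For the decoupling: fix a single $\rho^{1/2}$-slab $s\in\mathcal{S}$ with unit normal $\mathbf{n}_s$, and let $\eta_s$ be a Schwartz function with $|\eta_s|\gtrsim 1$ on $s$, rapid decay outside $3s$, and $\widehat{\eta_s}$ supported in $cs^*$, where $s^*$ denotes the polar dual of $s$ (a region of length $1$ in direction $\mathbf{n}_s$ and transverse radius $\rho^{-1/2}$). Expanding $Eg=\sum_{\widetilde\tau} Eg_{\widetilde\tau}$ and using Plancherel,
\begin{equation*}
\int_s|Eg|^2 \;\leq\; \int|\eta_s Eg|^2 \;=\; \sum_{\widetilde\tau_1,\widetilde\tau_2\in\widetilde{\mathfrak{T}}} \int (\eta_s Eg_{\widetilde\tau_1})\,\overline{(\eta_s Eg_{\widetilde\tau_2})},
\end{equation*}
and each $(\eta_s Eg_{\widetilde\tau})^\wedge$ is supported in the Minkowski sum $\Sigma(\widetilde\tau)+cs^*$. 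The key \emph{geometric overlap claim} is that, thanks to non-vanishing Gaussian curvature of $\Sigma$, for each $\widetilde\tau_1$ there are at most $O(1)$ caps $\widetilde\tau_2\in\widetilde{\mathfrak{T}}$ with $[\Sigma(\widetilde\tau_1)+cs^*]\cap[\Sigma(\widetilde\tau_2)+cs^*]\neq\emptyset$. Granted this, Cauchy--Schwarz on the cross-terms yields $\int|\eta_s Eg|^2\lesssim \sum_{\widetilde\tau}\int_{3s}|Eg_{\widetilde\tau}|^2$ up to a ${\rm RapDec}_\epsilon(R)\int|g|^2$ tail coming from the Schwartz decay of $\eta_s$, and summing over $s\in\mathcal{S}$ weighted by $c_s$ produces \eqref{eq: decoupling'}.

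The overlap claim reduces to controlling the multiplicity with which the $\rho^{-1/4}$-caps $\widetilde\tau$ project into a single $\rho^{-1/2}$-ball of $\mathbf{n}_s^\perp$ under the orthogonal projection $\pi$. The map $\pi|_\Sigma$ is a local diffeomorphism away from the codimension-$1$ critical set $\mathcal{C}_s := \{\xi\in\Sigma : \mathbf{n}_s\in T_\xi\Sigma\}$, and non-vanishing Gaussian curvature forces the singularities of $\pi|_\Sigma$ along $\mathcal{C}_s$ to be Whitney folds. A direct calculation with the fold normal form shows that a $\rho^{-1/2}$-neighbourhood in the image pulls back to a set of $\Sigma$-diameter $\lesssim \rho^{-1/4}$, hence meeting only $O(1)$ distinct $\rho^{-1/4}$-caps; away from $\mathcal{C}_s$ the bound is even easier since $\pi|_\Sigma$ is there bi-Lipschitz. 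This fold analysis is the principal obstacle, and it is precisely what pins the cap scale in this setting at $\rho^{-1/4}$, as opposed to $\rho^{-1/2}$ in Theorem \ref{theorem: hor slabs refined}, where the $\nu$-parallel hypothesis keeps us uniformly away from $\mathcal{C}_s$.

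To conclude \eqref{eq: final'}, apply Lemma \ref{lemma: small caps'} with the lemma's parameter set to $\rho^{1/2}$ (so that the $(\rho^{1/2})^{-1/2}$-caps of the lemma are our $\rho^{-1/4}$-caps $\widetilde\tau$) against the weight $w^\star$ and each $g_{\widetilde\tau}$. Since $w^\star$ is pointwise comparable to a bounded dilate of $w$, one has $A_{\rho^{1/2},R,\cdot}(w^\star)\lesssim A_{\rho^{1/2},R,\cdot}(w)$; summing in $\widetilde\tau$ and invoking \eqref{eq: decoupling'} gives the first line of \eqref{eq: final'}, and the second follows from \eqref{eq: comparing} with $\rho$ there replaced by $\rho^{1/2}$.
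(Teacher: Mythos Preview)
Your overall architecture matches the paper's: both prove the slab-wise decoupling \eqref{eq: decoupling'} via a smooth cutoff $\eta_s$ whose Fourier transform is essentially supported in the dual tube $s^\star$, then feed the decoupled pieces into Lemma~\ref{lemma: small caps'} (with the lemma's parameter set to $\rho^{1/2}$) to obtain \eqref{eq: final'}. The difference is in the geometric counting. The paper does not invoke Whitney folds; it bounds $N_1=\max_{\tau}\#\{\tau':(\tau-\tau')\cap s^\star\neq\varnothing\}$ by an elementary dichotomy: if $\omega_0-\omega\in s^\star$ with $\omega_0\in\tau$ then either $|\omega_0-\omega|\lesssim\rho^{-1/4+\delta}$ or the chord $\omega_0-\omega$ makes angle $\lesssim\rho^{-1/4+\delta}$ with the axis $e$ of $s^\star$, and strict convexity then confines $A(\tau)$ to two $\rho^{-1/4+\delta}$-caps. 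This is shorter and needs only the $C^2$ hypothesis.

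Your fold route is a legitimate alternative, but as written it has a gap. You state the overlap claim as \emph{bounded incidence} (each $\Sigma(\widetilde\tau_1)+cs^*$ meets $O(1)$ others) and then reduce it to \emph{bounded multiplicity} (each $\rho^{-1/2}$-ball in $\mathbf{n}_s^\perp$ meets the projections of $O(1)$ caps). These are not equivalent: away from the fold, $\pi(\Sigma(\widetilde\tau_1))$ has diameter $\sim\rho^{-1/4}$, so covering its $\rho^{-1/2}$-neighbourhood by $\rho^{-1/2}$-balls and summing your multiplicity bound would only give $N_1\lesssim\rho^{(n-1)/4}$. The fix is painless: bounded multiplicity of the Fourier supports already suffices for $L^2$ almost-orthogonality via pointwise Cauchy--Schwarz on the Fourier side ($|\sum_\tau\widehat{f_\tau}|^2\le M\sum_\tau|\widehat{f_\tau}|^2$), so you can simply weaken your overlap claim to the multiplicity statement and bypass incidence altogether. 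Two smaller points: the pullback of a $\rho^{-1/2}$-ball under $\pi|_\Sigma$ need not have diameter $\lesssim\rho^{-1/4}$ (it is typically two pieces of that diameter, one on each sheet of the fold), though your conclusion that it meets $O(1)$ caps is correct; and the passage from $w^\star$ to $w$ in deducing \eqref{eq: final'} is not just ``pointwise comparability'' --- the paper handles the corresponding step in Theorem~\ref{theorem: hor slabs refined} by writing $w^\star$ as a bounded sum of translates of $w$ and absorbing each via a change of variables.
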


Note that an immediate consequence of \eqref{eq: final'} is 
\begin{eqnarray*}
\begin{aligned}
 \int_{B_R} |Eg|^2w&\leq C_{\epsilon,\nu}R^\epsilon  A_{\rho^{1/2}, R, \; {\rm supp}\; g}(w)\int |g|^2\\
    &\lessapprox_\nu \left(\frac{R}{\rho^{1/2}}\right)^{\frac{n-1}{n+1}} \sup_{\ell\perp {\rm supp}\; g}Xw(\ell)\int |g|^2.
\end{aligned}
\end{eqnarray*}

\begin{proof}[Proofs of \eqref{eq: decoupling} and \eqref{eq: decoupling'}.] Fix $\epsilon>0$ and $R\geq 1$. Let $s$ be a $\rho^{1/2}$ slab in $B_R$, and fix $g\in L^2(B^{n-1})$. Let $\mathfrak{T}_1$, $\mathfrak{T}_2$ be collections of finitely overlapping $\rho^{-1/4}$ and $\rho^{-1/2}$-caps, respectively, that cover $B^{n-1}$. For $i=1,2$, write
\begin{equation*}
    g=\sum_{\tau\in \mathfrak{T}_i}g_\tau\qquad{{\rm supp}\,g_\tau\subset \tau}.
\end{equation*}
We will show that
\begin{equation*}
    \int_s |Eg|^2\leq C_\epsilon R^\epsilon \sum_{\tau\in\mathfrak{T}_1}\int_{3s} |Eg_\tau|^2
\end{equation*}
and that, if additionally $s$ is $\nu$-parallel to $\Sigma$ for some $\nu>0$, then
\begin{equation*}
    \int_s |Eg|^2\leq C_{\nu,\epsilon} R^\epsilon \sum_{\tau\in\mathfrak{T}_2}\int_{3s} |Eg_\tau|^2.
\end{equation*}
Note that henceforth we may assume that $\rho\gtrsim_\epsilon R^{\epsilon/n}$ (as otherwise \eqref{eq: decoupling} and \eqref{eq: decoupling'} follow trivially by the Cauchy--Schwarz inequality), and that $\nu\gtrsim_\epsilon R^{-\epsilon}$ (as otherwise $C_{\epsilon,\nu}$ may be chosen to be an appropriately large power of $R$ for \eqref{eq: decoupling} to follow).

For this proof, it will be useful to think of $g$ as truly supported on $\Sigma$. And indeed, due to our assumption that the normals to $\Sigma$ create angles at most $1/100$ with the vertical direction, it suffices instead to prove the above decoupling inequalities for $g\in L^2(\Sigma)$, for $\widehat{gd\sigma}$ in place of $Eg$ and for $\mathfrak{T}_i$ collections of finitely overlapping $\rho^{-1/4}$-caps and $\rho^{-1/2}$-caps, respectively, of $\Sigma$.

Let $\eta:\mathbb{R}^n\rightarrow\mathbb{R}$ be a non-negative, smooth bump function with $\eta(x)=1$ for all $x\in B_1$ and $\eta(x)=0$ for all $x\in B_2$. Denote by $\eta_s$ a smooth bump function adapted to $s$. In particular, if $s_0=[0,\rho^{1/2}]^{n-1}\times [0,1]$, define
\begin{equation*}
    \eta_{s_0}(x):=\eta\left(\frac{x'}{\rho^{1/2}},x_n\right),
\end{equation*}
and let $\eta_s(x):=\eta_{s_0}(Mx)$, where $M$ is a rigid motion mapping $s$ to $s_0$. Let $s^\star$ be a `dual' object to $s$, specifically the tube with centre 0, direction the normal to $s$, length 1 and cross section of radius $\rho^{-1/2+\delta}$. It is easy to see by stationary phase that $\widehat{\eta_s(x)}$ is essentially supported in $s^\star$; more precisely,
\begin{equation*}
    |\widehat{\eta_s}(y)|={\rm RapDec}_\epsilon(\rho)\|\eta_s\|_1={\rm RapDec}_\epsilon(R)\qquad{\text{for all }y\in \mathbb{R}^n\setminus s^\star}.
\end{equation*}
Therefore, for $i=1,2$,
\begin{align*}
    \int_s |\widehat{gd\sigma}|^2\leq \int |\widehat{gd\sigma}|^2\eta_s&=\int \Big|\sum_{\tau\in\mathfrak{T}_i} \widehat{g_\tau d\sigma}\Big|^2\eta_s\\
    &=\int \Big(\sum_{\tau\in\mathfrak{T}_i} \widehat{g_\tau d\sigma}\Big)\Big(\sum_{\tau'\in\mathfrak{T}_i}  \overline{\widehat{g_{\tau'}d\sigma}}\Big)\eta_s\\
    &=\sum_{\tau,\tau' \in \mathfrak{T}_i}\int \left(\widehat{g_\tau d\sigma} \;\overline{\widehat{g_{\tau'}d\sigma}}\right)\eta_s\\
    &=\sum_{\tau,\tau'\in\mathfrak{T}_i}\int (g_\tau d\sigma) \ast (\widetilde{g_{\tau'}d\sigma})\;\widehat{\eta_s},
\end{align*}
where, for every $f:\mathbb{R}^n\rightarrow \mathbb{C}$, $\widetilde{f}$ is defined by $\widetilde{f}(y):=\overline{f(-y)}$.

For every $\tau,\tau'\in\mathfrak{T}_i$, the function $(g_\tau d\sigma) \ast (\widetilde{g_{\tau'}d\sigma})$ is supported in $\tau-\tau'$, and thus its contribution to the above sum is negligible unless $\tau-\tau'$ intersects $s^\star$. More precisely,
\begin{equation*}
    \int (g_\tau d\sigma) \ast (\widetilde{g_{\tau'}d\sigma})\;\widehat{\eta_s}=\int_{\mathbb{R}^n\setminus s^\star} (g_\tau d\sigma) \ast (\widetilde{g_{\tau'}d\sigma})\;\widehat{\eta_s}={\rm RapDec}_\epsilon (R)\|g_\tau\|_2\|g_{\tau'}\|_2
\end{equation*}
whenever $\tau-\tau'\cap s^\star=\varnothing$, whence
\begin{eqnarray}\label{eq: dec}
\begin{aligned}
    \int_s |\widehat{gd\sigma}|^2&=\sum_{\tau,\tau'\in\mathfrak{T}_i:\; (\tau-\tau')\cap s^\star\neq\varnothing}\int (g_\tau d\sigma) \ast (\widetilde{g_{\tau'}d\sigma})\;\widehat{\eta_s}+ {\rm RapDec}_\epsilon (R) \int |g|^2\\
    &=\sum_{\tau,\tau'\in\mathfrak{T}_i:\; (\tau-\tau')\cap s^\star\neq\varnothing}\int \left(\widehat{g_\tau d\sigma} \;\overline{\widehat{g_{\tau'}d\sigma}}\right)\eta_s+ {\rm RapDec}_\epsilon (R) \int |g|^2\\
    &\leq \sum_{\tau,\tau'\in\mathfrak{T}_i:\; (\tau-\tau')\cap s^\star\neq\varnothing} \left( \int_{3s} |\widehat{g_\tau d\sigma}|^2+\int_{3s}|\widehat{g_{\tau'}d\sigma}|^2\right) + {\rm RapDec}_\epsilon (R) \int |g|^2\\
    &\leq N_i\cdot \sum_{\tau\in\mathfrak{T}_i}\int_{3s} |\widehat{g_\tau d\sigma}|^2+ {\rm RapDec}_\epsilon (R) \int |g|^2,
\end{aligned}
\end{eqnarray}
where
\begin{equation*}
    N_i:=\max_{\tau\in\mathfrak{T}_i}\;\#\{\tau'\in\mathfrak{T}_i:(\tau-\tau')\cap s^\star\neq\varnothing\}.
\end{equation*}
Note that for the last inequality in \eqref{eq: dec} we used that $s^\star$ is symmetric around 0.

It now suffices to show that 
\begin{equation}\label{eq: card1}
    N_1\leq C_\epsilon R^\epsilon
\end{equation}
and that, if additionally $s$ is $\nu$-parallel to $\Sigma$ for some $\nu\gtrsim_\epsilon R^\epsilon$, then
\begin{equation}\label{eq: card2}
    N_2\leq C_{\epsilon,\nu} R^\epsilon.
\end{equation}
We first focus on the case $i=1$. Fix $\tau \in \mathfrak{T}_1$, and let $\omega(\tau)$ denote its centre. The family $\mathfrak{T}_1$ consists of $\rho^{-1/4}$-caps, so the $\tau'\in\mathfrak{T}_1$ with $(\tau-\tau')\cap s^\star\neq\varnothing$ cover the set
\begin{equation*}
    A(\tau):=\{\omega\in\Sigma: \; (\tau-\omega)\cap s^\star\neq \varnothing\}.
\end{equation*}
Let $e$ denote the direction of $s^\star$. For every $\omega\in A(\tau)$, there exists $\omega_0\in\tau$ such that $\omega_0-\omega \in s^\star$, which implies that
\begin{equation*}
    |\omega_0-\omega|\lesssim \rho^{-1/4+\delta}\;\;\;\text{ or }\;\;\;{\rm Angle}(\omega_0-\omega, e)\lesssim \rho^{-1/4+\delta},
\end{equation*}
hence
\begin{equation*}
    |\omega-\omega(\tau)|\lesssim \rho^{-1/4+\delta}\;\;\;\text{ or }\;\;\;{\rm Angle}(\omega-\omega(\tau), e)\lesssim \rho^{-1/4+\delta}.
\end{equation*}
It follows that $A(\tau)$ can be covered by two $\sim\rho^{-1/4+\delta}$-caps of $\Sigma$, and thus by $O(\rho^\delta)=O(R^\epsilon)$ $\rho^{-1/4}$-caps of $\Sigma$. This immediately implies \eqref{eq: card1}, which in turn establishes the desired estimate \eqref{eq: decoupling'} when combined with \eqref{eq: dec}.

For the case $i=2$, let $\nu\gtrsim_\epsilon R^\epsilon$. Fix $\tau\in \mathfrak{T}_2$ and denote by $\omega(\tau)$ its centre. Similarly to the previous case, the $\tau'\in\mathfrak{T}_2$ with $(\tau-\tau')\cap s^\star\neq\varnothing$ cover the set
\begin{equation*}
    A(\tau):=\{\omega\in\Sigma: \; (\tau-\omega)\cap s^\star\neq \varnothing\}=\Sigma\cap (\tau-s^\star).
\end{equation*}
Now however the family $\mathfrak{T}_2$ consists of $\rho^{-1/2}$-caps; moreover, $s$ is $\nu$-parallel to $\Sigma$, which implies that all tangents to $\tau$ create angles at least $\nu$ with the (roughly vertical) direction $e$ of $s^\star$. Therefore,
\begin{equation*}
    \tau-s^\star\subset R_{s^\star},
\end{equation*}
for some vertical rectangle $R_{s^\star}$, with vertical side of length $\sim_\nu 1$ (roughly the length of $s^\star$) and all other sides of length $\sim_\nu \rho^{-1/2+\delta}$ (approximately the sum of the width of $s^\star$ and the radius of $\tau$). 

Due to our assumption that all tangents to $\Sigma$ create angle at most $1/100$ with the vertical direction, it follows that $\Sigma \cap R_{s^\star}$ (and consequently $A(\tau)$) is contained in a single $\sim_\nu \rho^{-1/2+\delta}$-cap of $\Sigma$, and can thus be covered by $O(R^\epsilon)$ $\rho^{-1/2}$-caps in $\mathfrak{T}_2$. This implies the desired estimate \eqref{eq: card2} and hence completes the proof of \eqref{eq: decoupling}.
\end{proof}

\begin{proof}[Proof of Theorem \ref{theorem: hor slabs refined}.] Let $\nu$, $\epsilon$, $R$, $\rho$, $w$ and $g$ be as in the statement of the theorem. Now that \eqref{eq: decoupling} has been established, it suffices to prove the first assertion in \eqref{eq: final}.

To that end, observe that $w^\star$ is the sum of $3^{n-1}$ weights: the weight $w_0:=w$ (supported in $B_R$), and weights $w_j$ of the form $w(\cdot - t_j)$ (for appropriate $t_j\in\mathbb{R}^{n-1}\times \{0\}$, with $|t_j| \leq R$, for $j=1,2\dots$). It thus suffices to show that
\begin{equation*}
    \int |Eg|^2w_j\leq C_{\epsilon, \nu}R^\epsilon \sum_{\tau\in\mathfrak{T}}A_{\rho, R, \,{\rm supp}\,g_\tau}(w)\int |g_\tau|^2
\end{equation*}
for all $j=1,2 \dots$. For $j=0$ the inequality follows by Lemma \ref{lemma: small caps'}. For $j=1,2 \dots$,
\begin{equation*}
    Eg=Eg_j(\cdot - t_j)\text{, where }g_j:=e^{2\pi i \langle t_j,\Sigma(\cdot)\rangle}g.
\end{equation*}
Observe that, denoting $g_{j,\tau}:=e^{2\pi i \langle t_j,\Sigma(\cdot)\rangle}g_\tau$, we can write
\begin{equation*}
    g_j=\sum_{\tau\in\mathfrak{T}}g_{j,\tau},\quad{{\rm supp}\;g_{j,\tau}={\rm supp}\;g_{\tau}\subset \tau}.
\end{equation*}
Therefore, by Lemma \ref{lemma: small caps'},
\begin{align*}
    \int |Eg|^2w_j&=\int |Eg_j(\cdot-t_j)|^2w(\cdot-t_j)\\
        &=\int |Eg_j|^2w\\
        &\leq C_{\epsilon,\nu} R^\epsilon \sum_{\tau\in\mathfrak{T}}A_{\rho, R, \,{\rm supp}\,g_{j, \tau}}(w)\int |g_{j,\tau}|^2\\
        &=C_{\epsilon,\nu} R^\epsilon \sum_{\tau\in\mathfrak{T}}A_{\rho, R, \,{\rm supp}\,g_\tau}(w)\int |g_{\tau}|^2,
\end{align*}
completing the proof.
\end{proof}

\begin{proof}[Proof of Theorem \ref{theorem: all slabs refined}.] The proof follows the same steps as that of Theorem \ref{theorem: hor slabs refined}, but with the family $\mathfrak{T}$ replaced by $\widetilde{\mathfrak{T}}$.
\end{proof}

%%%%%%%%%%%%%%%%%%%%%%%%%%%%%%%%%%%%%%%%%%%%%%%%%%%%%%%%%%%%%%%%%%%%%%%%%%%%%%%%%%%%%%%%%%%%%%%%%%%%%%%%%%%%%%%%%%%%%%%%%%%%%%%%%%%%%%%%%%%%%%%%%%%%%%%%%%%%%%%%%%%%%%%%%%%%%%%%%%%%%%%%%%%%%%%%%%%%%

\section{Guth's argument: the $R^{\frac{n-1}{n+1}}$ barrier}\label{section: Guth's example}

In his recent talk \cite{Gu22}:
\begin{enumerate}[(a)]
    \item Guth identified two `decoupling axioms' (appropriate local constancy and local $L^2$-orthogonality conditions) that are satisfied by all $Eg$, and are sufficient to ensure that the Bourgain--Demeter decoupling inequality \cite{BD15} holds in $B_R$ for every function $F$ satisfying them.
    \item He then constructed a function $F:B_R\rightarrow\mathbb{C}$ which satisfies the decoupling axioms, but for which the Mizohata--Takeuchi conjecture fails by a factor of $\sim (\log R)^{-3} R^{\frac{n-1}{n+1}}$. Notably, $F_{|B_R}$ is not of the form $Eg_{|B_R}$ for any $g\in L^2(B^{n-1})$.
\end{enumerate}

Notably, Guth's decoupling axioms for all $Eg$ are also sufficient to imply the refined decoupling Theorem \ref{theorem: refined Strichartz} (as a careful review of its proof reveals), and thus its corollary Theorem \ref{theorem: all weights}, which established the conjecture with a loss of $\lessapprox R^{\frac{n-1}{n+1}}$. Therefore, our main result is essentially sharp given the techniques used.

In this section we outline Guth's axiomatic approach and argument demonstrating the existence of a counterexample \cite{Gu22}, and briefly review our result within this context. We emphasise that these results are not ours, and we present them only for self-containment.

Fix $R\geq 1$ and $\epsilon>0$. In this section, for every $g\in L^2(B^{n-1})$ and every cap $\tau$ in $B^{n-1}$, we denote $g_\tau:=g_{|\tau}$. In particular, $g_{B^{n-1}}=g$.

We call a cap $\tau$ in $B^{n-1}$ \textit{admissible} if its diameter $d(\tau)$ is a dyadic number in $[R^{-1/2}, R^{-\epsilon}]\cup \{2\}$. In this analysis, $B^{n-1}$ is the only admissible cap of diameter 2. Denote by $\mathcal{D}_R$ the set of all admissible caps. 

For every $\tau\in \mathcal{D}_R$, let $F_\tau:\mathbb{R}^n\rightarrow\mathbb{C}$ be some function. Note that the caps $\tau$ are simply used for enumeration here, and may be entirely unrelated to properties of $F_\tau$. This is in contrast to, say, functions of the form $Eg_\tau$, which are Fourier-localised close to $\Sigma(\tau)$.

\textbf{Axiomatic decoupling. (Guth \cite{Gu22})} \textit{If the decoupling axioms} (DA1) \textit{and} (DA2) \textit{below hold for the full sequence $(F_\tau)_{\tau\in D_R}$, then the function $F:=F_{B^{n-1}}$ in $B_R$ can be decoupled into the functions $F_\theta$ corresponding to the smallest possible scale, as follows:}
\begin{equation*}
    \|F\|_{L^p(B_R)} \leq C_{\epsilon} R^{O(\epsilon)} \left(\sum_{\theta\in \mathcal{D}_R:\;d(\theta)\sim R^{-1/2}} \|F_{\theta}\|_{L^p(B_R)}^2\right)^{1/2}\;\;\;\text{ for all }2\leq p\leq \frac{2(n+1)}{n-1}.
\end{equation*}

The \textit{decoupling axioms} (DA1) and (DA2) for a sequence $(F_\tau)_{\tau\in\mathcal{D}_R}$ are the following statements.

\textbf{(DA1)} (Local constancy).  For every $\tau\in\mathcal{D}_R$ with $d(\tau)\leq R^{-\epsilon}$, the function $|F_{\tau}|$ is essentially constant on each translate of 
\begin{equation*}
    \Sigma(\tau)^{\star}:=\{x: |x \cdot (\xi-\xi_{\tau})|\leq 1 \text{ for all } \xi \in \Sigma(\tau)\},
\end{equation*}
where $\xi_\tau$ denotes the centre of $\Sigma(\tau)$.\footnote{Formally, a function is essentially constant on translates of $\Sigma(\tau)^\star$ if it satisfies estimate (24) in the statement of Lemma 6.1 in \cite{GWZ22}, with $\theta$ replaced by the smallest rectangle containing $\Sigma(\tau)$.}

\textbf{(DA2)} (Local $L^2$--orthogonality). Let $\gamma\in D_R$, and suppose that $\gamma=\sqcup_{\tau\in\mathfrak{T}}\;\tau$, where $\mathfrak{T}$ is a family of finitely overlapping caps in $\mathcal{D}_R$ with diameters smaller than $d(\gamma)$. Then, the estimate
\begin{equation*}
    \int_K |F_{\gamma}|^2 \sim \sum_{\tau \subset \gamma} \int_K |F_{\tau}|^2 + {\rm RapDec}_\epsilon(R)\int |F_\gamma|^2
\end{equation*}
holds for every convex $K\subset \mathbb{R}^n$ such that the sets $\tau+K^{\star}$, over all $\tau\in\mathfrak{T}$, are finitely overlapping.\footnote{Without (DA2), no relationship between the different $F_\tau$ would be imposed. Observe that, in contrast to the case where $(F_\tau)_{\tau\in\mathcal{D}_R}=(Eg_\tau)_{\tau\in\mathcal{D}_R}$, the equality $F_\gamma=\sum_{\tau\in\mathfrak{T}}F_\tau$ may not hold for a sequence $(F_\tau)_{\tau\in\mathcal{D}_R}$ satisfying the decoupling axioms.}

\vspace{0.3in}

It is not hard to see that, for all $g\in L^2(B^{n-1})$, the sequence $(Eg_\tau)_{\tau\in\mathcal{D}_R}$ satisfies (DA1) and (DA2). Guth's axiomatic decoupling statement above, together with a careful review of the proof \cite{GWZ22} of the refined decoupling Theorem \ref{theorem: refined Strichartz} (which directly led to our Theorem \ref{theorem: all weights}, or equivalently to \eqref{eq: restatement} below), reveal the following.

\textbf{Fact A.} \textbf{(DA1 \& DA2 $\Rightarrow$ MT with $\lessapprox R^{\frac{n-1}{n+1}}$-loss for all $Eg$)} \textit{The fact that}
\begin{equation*}
    (Eg_\tau)_{\tau\in\mathcal{D}_R}\text{ \textit{satisfies} (DA1) \textit{and} (DA2) \textit{for all} } g\in L^2(B^{n-1})
\end{equation*}
\textit{implies the inequality}
\begin{equation}\label{eq: restatement}
    \int_{B_R} |Eg|^2 w \leq C_\epsilon R^{\frac{n-1}{n+1}+\epsilon} \; \|Xw\|_\infty \;\frac{1}{R}\int_{B_R} |Eg|^2
\end{equation}
\textit{for all $g\in L^2(B^{n-1})$ and $w:\mathbb{R}^n\rightarrow [0,+\infty)$.}

To improve on the Mizohata--Takeuchi conjecture, one needs to reduce the lossy factor $R^{\frac{n-1}{n+1}}$ in \eqref{eq: restatement} (and ideally to remove it altogether). Up to $\approx 1$ factors, this is impossible if one insists on only using that all $(Eg_\tau)_{\tau\in\mathcal{D}_R}$ satisfy (DA1) and (DA2). Indeed, Guth \cite{Gu22} proved the following.

\textbf{Fact B.} \textbf{(DA1 \& DA2 $\not\Rightarrow$ MT with $\ll R^{\frac{n-1}{n+1}}$-loss for general $F$)} \textit{There exists $F:\mathbb{R}^n\rightarrow \mathbb{C}$ with}
\begin{equation}\label{eq: axioms}
    F=F_{B^{n-1}}\textit{ for some }(F_\tau)_{\tau\in\mathcal{D}_R}\textit{ satisfying}\text{ (DA1) }\textit{and}\text{ (DA2)}\textit{,}
\end{equation}
\textit{such that}
\begin{equation}\label{eq: counterexample}
    \int_{B_R}|F|^2 w \gtrsim (\log R)^{-3} R^{\frac{n-1}{n+1}} \;\|Xw\|_\infty\;\frac{1}{R}\int_{B_R}|F|^2
\end{equation}
\textit{for some $w:\mathbb{R}^n\rightarrow [0,+\infty)$.}

\begin{proof} Let $\Sigma$ be as earlier. The scale $R^{-\frac{1}{n+1}}$ plays a key role in the upcoming argument; thus, denote by $\mathcal{D}$ the set of all $\tau\in\mathcal{D}_R$ with $d(\tau)= R^{-\frac{1}{n+1}}$ (or, precisely, with $d(\tau)$ equal to the smallest dyadic number that is at least $R^{-\frac{1}{n+1}}$). For each $\tau\in\mathcal{D}$, let $\mathbb{T}_\tau$ be a family of finitely-overlapping parallel tubes in $\mathbb{R}^n$ that intersect and cover $B_R$, of radius $R^{\frac{1}{n+1}}$, length $R^{\frac{2}{n+1}}$ and direction the normal to $\Sigma(\tau)$ (these tubes are essentially translates of $\Sigma(\tau)^\star$). Let
\begin{equation*}
    \mathbb{T}:=\{T\in\mathbb{T}_\tau:\;\tau\in\mathcal{D}\}.
\end{equation*}
There exists a weight $w:\mathbb{R}^n\rightarrow [0,+\infty)$ such that the following hold.
\begin{enumerate}
    \item $w$ is the characteristic function of a union of $\sim R^{n-1}$ unit balls in $B_R$.
    \item Each tube $L$ of radius $1$ satisfies $w(L) \lesssim \log R$.
    \item Each tube $T\in\mathbb{T}$ satisfies $w(T) \lesssim \log R$, and fully contains every $1$-ball in ${\rm supp}\;w$ that it intersects.
\end{enumerate}
This is the weight that will feature in \eqref{eq: counterexample}, and its existence is guaranteed by prior work of the first author \cite[Theorem 3]{Ca09} on aspects of the Mizohata--Takeuchi conjecture. The details are omitted.

The function $F$ will be carefully defined as a sum of wave packets, so that it is large on a big proportion of ${\rm supp}\;w$; more precisely, on a large set $\mathcal{B}$ of unit balls in ${\rm supp}\;w$. The set $\mathcal{B}$ is the one appearing in the claim below. The proof is postponed to the end of the section. (Note that the claim would be trivial if each tube in $\mathbb{T}$ intersected and fully contained at most one $1$-ball in ${\rm supp}\;w$.)

\begin{claim} \label{claim} There exist\newline
\emph{(i)} a set $\mathcal{B}=\{B_1,\ldots,B_m\}$ of $\gtrsim (\log R)^{-2}R^{n-1}$ disjoint unit balls in ${\rm supp}\;w$, and\newline
\emph{(ii)} sets $\mathbb{T}_j\subset\mathbb{T}$ with $\#\mathbb{T}_j\gtrsim \#\mathcal{D}$ for every $j=1,\ldots,m$,\newline
such that the following hold.

\vspace{0.01in}

\emph{(P1)} The tubes in $\mathbb{T}_j$ contain $B_j$, for all $j=1,\ldots,m$.\newline
\emph{(P2)} For each $j=2,\ldots,m$, the tubes in $\mathbb{T}_j$ do not intersect any of the balls $B_1,\ldots,B_{j-1}$.
\end{claim}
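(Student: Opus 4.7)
The plan is to construct $(B_j,\mathbb{T}_j)_{j=1}^{m}$ by a greedy iterative procedure: at each stage pick as $B_j$ a unit ball in $\mathrm{supp}\,w$ through which many tubes of $\mathbb{T}$ avoid all previously chosen $B_i$, and let $\mathbb{T}_j$ consist of exactly those good tubes. The argument rests on two preliminary incidence counts. First, by property~(1), the number $N$ of unit balls in $\mathrm{supp}\,w$ satisfies $N\sim R^{n-1}$, and for each such ball $B$ and each $\tau\in\mathcal{D}$ the finitely-overlapping tubes in $\mathbb{T}_\tau$ cover $B_R$, so a bounded number of them contain $B$; hence $\#\{T\in\mathbb{T}:B\subset T\}\sim\#\mathcal{D}$, with constants $c_1,c_2>0$ independent of $B$. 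Second, by property~(3), on $\mathrm{supp}\,w$ the relation $B\cap T\neq\varnothing$ is equivalent to $B\subset T$, and $w(T)\lesssim\log R$ therefore forces each $T\in\mathbb{T}$ to fully contain at most $O(\log R)$ unit balls of $\mathrm{supp}\,w$.

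Now I would carry out the inductive step. Suppose $B_1,\dots,B_{j-1}\in\mathrm{supp}\,w$ and $\mathbb{T}_1,\dots,\mathbb{T}_{j-1}$ with $\#\mathbb{T}_i\gtrsim\#\mathcal{D}$ satisfying (P1)--(P2) have been selected. Call $T\in\mathbb{T}$ \emph{bad} if $T\supset B_i$ for some $i<j$, otherwise \emph{good}, and set $\mathbb{T}^{\mathrm{bad}}:=\{T\in\mathbb{T}:T\text{ bad}\}$. By the first count, $|\mathbb{T}^{\mathrm{bad}}|\lesssim (j-1)\#\mathcal{D}$. For each candidate ball $B\in\mathrm{supp}\,w\setminus\{B_1,\dots,B_{j-1}\}$, let $b(B):=\#\{T\in\mathbb{T}^{\mathrm{bad}}:B\subset T\}$. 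Double-counting and the second count give
\begin{equation*}
\sum_B b(B)\;=\;\sum_{T\in\mathbb{T}^{\mathrm{bad}}}\#\{B\subset T:B\in\mathrm{supp}\,w\}\;\lesssim\;(j-1)\,\#\mathcal{D}\,\log R,
\end{equation*}
so by Markov at least half the candidate balls satisfy $b(B)\lesssim (j-1)\#\mathcal{D}\log R/N$. Pick such a $B$ as $B_j$, and set $\mathbb{T}_j:=\{T\in\mathbb{T}\setminus\mathbb{T}^{\mathrm{bad}}:B_j\subset T\}$. Then
\begin{equation*}
\#\mathbb{T}_j\;\geq\;\#\{T\in\mathbb{T}:B_j\subset T\}\;-\;b(B_j)\;\geq\;c_1\#\mathcal{D}\;-\;O\!\left(\tfrac{(j-1)\log R}{N}\right)\#\mathcal{D},
\end{equation*}
which remains $\gtrsim\#\mathcal{D}$ as long as $j-1\lesssim N/\log R\sim R^{n-1}/\log R$. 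Property (P1) holds by construction of $\mathbb{T}_j$; (P2) follows from the definition of good tubes together with the intersection/containment equivalence on $\mathrm{supp}\,w$.

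Iterating the greedy step therefore produces $m\gtrsim R^{n-1}/\log R\geq (\log R)^{-2}R^{n-1}$ balls with all the required properties. The main obstacle I anticipate is purely the bookkeeping of constants: one must verify that the uniform lower bound $\#\mathbb{T}_j\gtrsim \#\mathcal{D}$ survives the entire iteration, which amounts to quantifying the implicit constants in the two preliminary counts and ensuring that the Markov-step loss stays strictly below the baseline $c_1\#\mathcal{D}$ for every $j\leq m$. Beyond this, no geometric input on $\Sigma$ or on the tube directions is needed: everything is extracted from properties~(1) and~(3) of the weight $w$.
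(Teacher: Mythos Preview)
Your proof is correct and uses the same two incidence counts as the paper (each ball lies in $\sim\#\mathcal{D}$ tubes, each tube contains $O(\log R)$ balls of $\mathrm{supp}\,w$) combined with a greedy selection. The only difference is organizational: the paper fixes a threshold of $\#\mathcal{D}/2$ bad tubes, runs the selection sequentially through an enumeration of the balls, and then lower-bounds $m$ by a contradiction argument on total incidences, whereas you apply Markov's inequality at each step to locate the next $B_j$ directly. Your version is slightly more efficient and in fact yields $m\gtrsim R^{n-1}/\log R$, saving one logarithm over the stated $(\log R)^{-2}R^{n-1}$.
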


We now construct a sequence $(F_\tau)_{\tau\in\mathcal{D}_R}$ of functions $F_\tau:\mathbb{R}^n\rightarrow\mathbb{C}$ as follows.
\begin{itemize}
    \item For each $\tau\in\mathcal{D}_R$ with $R^{-1/2}\lesssim d(\tau) < R^{-\frac{1}{n+1}}$, define $F_{\tau}:= d(\tau)^{\frac{n-1}{2}}\chi_{B_R}$.
    \item For $\tau\in\mathcal{D}_R$ with $d(\tau)= R^{-\frac{1}{n+1}}$ (or, precisely, for each $\tau\in\mathcal{D}$), define
\begin{equation*}
    F_{\tau}:=\sum_{T\in\mathbb{T}_\tau}  c_{T} e^{-2\pi i \langle\;\cdot\;,\; \xi_{\tau}\rangle} d(\tau)^{\frac{n-1}{2}} \phi_T,
\end{equation*}
where $\phi_T$ is a bump function on $T$ and $\xi_\tau$ is the centre of $\Sigma(\tau)$. The coefficients $c_T\in\mathbb{C}$ are defined below.
\item For $\gamma\in\mathcal{D}_R$ with $R^{-\frac{1}{n+1}} < d(\tau)\leq 2$, define
\begin{equation*}    F_{\gamma}:=\sum_{\tau\in\mathcal{D},\tau\subset\gamma} F_{\tau}.
\end{equation*}
\end{itemize}

Let $F:=F_{B^{n-1}}=\sum_{\tau\in\mathcal{D}}F_\tau$. The coefficients $c_T$ will all have modulus 1, and will be chosen below so that
\begin{equation}\label{eq: F large}
    |F|\gtrsim R^{\frac{n-1}{2(n+1)}}\text{ on }\bigcup_{B\in\mathcal{B}} B.
\end{equation}
\textit{Verifying \eqref{eq: axioms} and \eqref{eq: counterexample}.} For each $\tau\in\mathcal{D}$, $F_\tau$ is Fourier supported roughly in the smallest slab containing $\Sigma(\tau)$. It easily follows that $(F_\tau)_{\tau\in\mathcal{D}_R}$ satisfies the decoupling axioms (DA1) and (DA2). 

On the other hand, \eqref{eq: F large} and the small line occupancy of $w$ imply \eqref{eq: counterexample}, so $F$ and $w$ do not respect the numerology of the Mizohata--Takeuchi conjecture. Indeed,
\begin{equation*}
    \int_{B_R}|F|^2 w\gtrsim R^{\frac{n-1}{n+1}}\#\mathcal{B}\gtrsim (\log R)^{-2}R^{\frac{n-1}{n+1}}R^{n-1}
\end{equation*}
by \eqref{eq: F large}, while
\begin{eqnarray*}
\begin{aligned}
    \int |F|^2 & \lesssim \sum_{\tau\in\mathcal{D}}|F_\tau|^2\lesssim \sum_{\tau\in\mathcal{D}}\sum_{T\in\mathbb{T}_\tau}\int_T |c_T d(\tau)^{\frac{n-1}{2}}|^2\sim \sum_{\tau\in\mathcal{D}}\sum_{T\in\mathbb{T}_\tau} |T|\cdot |\tau|=|B^{n-1}|\cdot |B_R|\sim R^n
\end{aligned}
\end{eqnarray*}
due to the essential disjointness of the Fourier supports of the $F_\tau$, and therefore
\begin{equation*}
    \|Xw\|_\infty\frac{1}{R}\int_{B_R}|F|^2\lesssim (\log R) R^{n-1}\lesssim (\log R)^3 R^{-\frac{n-1}{n+1}}\int_{B_R}|F|^2 w.
\end{equation*}

\textit{Defining the $c_T$.} For $T\in\mathbb{T}$, let $\tau(T)$ be the cap $\tau\in\mathcal{D}$ with $T\in\mathbb{T}_\tau$. For $B\in\mathcal{B}$, let
\begin{equation*}
    \mathbb{T}_B:=\{T\in\mathbb{T}:\;T\text{ intersects }B\},
\end{equation*}
and observe that, once the $c_T$ are defined for all $T\in\mathbb{T}$, it will hold that
\begin{equation*}
    F_{\;|B}=R^{\frac{-(n-1)}{2(n+1)}} \sum_{T\in\mathbb{T}_B} c_T e^{-2\pi i \langle\;\cdot\;,\; \xi_{\tau(T)}\rangle} \phi_{T_{\;|B}}\;\;\;\text{ for all }B\in\mathcal{B}.
\end{equation*}
The $c_T$ are thus defined via an iteration, the $j$-th step of which ensures that the above sum has large magnitude for $B=B_j$. First, for all $T\in\mathbb{T}_{B_1}$ define
\begin{equation*}
    c_T:=e^{2\pi i \langle x_1, \; \xi_{\tau(T)}\rangle},
\end{equation*}
where $x_1$ is the centre of $B_1$. Due to the small radius of $B_1$, 
\begin{equation*}
    {\rm Re}\left(c_T e^{-2\pi i \langle x,\; \xi_{\tau(T)}\rangle}\right)={\rm Re}\left(e^{2\pi i \langle x_1-x,\; \xi_{\tau(T)}\rangle}\right)\gtrsim 1\text{ for all }x\in B_1,
\end{equation*}
hence
\begin{equation*}
    {\rm Re}\left(R^{\frac{-(n-1)}{2(n+1)}} \sum_{T\in\mathbb{T}_{B_1}} c_T e^{-2\pi i \langle\;\cdot\;,\; \xi_{\tau(T)}\rangle} \phi_T\right)\gtrsim R^{\frac{-(n-1)}{2(n+1)}} \#\mathbb{T}_1\gtrsim R^{\frac{n-1}{2(n+1)}}
\end{equation*}
on $B_1$. Therefore, once the remaining $c_T$ have been defined, we will have that
\begin{equation*}
    |F|\geq {\rm Re}\; F\gtrsim R^{\frac{n-1}{2(n+1)}}\text{ on }B_1,
\end{equation*}
as desired.

Now, fix $j=2,\ldots,m$. Suppose that, for each $i=1,\ldots,j-1$, we have performed the $i$-th step of the iteration, by defining $c_T$ for all $T\in\mathbb{T}_{B_1}$ (when $i=1$) and for all $T\in\mathbb{T}_{B_i}\setminus (\mathbb{T}_{B_1}\cup\ldots\mathbb{T}_{B_{i-1}})$ (when $i\geq 2$) so that
\begin{equation*}
    \left|{\rm Re}\left(R^{\frac{-(n-1)}{2(n+1)}} \sum_{T\in\mathbb{T}_{B_i}} c_T e^{-2\pi i \langle\;\cdot\;,\; \xi_{\tau(T)}\rangle} \phi_T\right)\right|\gtrsim R^{\frac{n-1}{2(n+1)}}
\end{equation*}
on $B_i$ (which ensures that, once the remaining $c_T$ have been defined, we will have that
\begin{equation*}
    |F|\gtrsim R^{\frac{n-1}{2(n+1)}}\text{ on }B_1,\ldots,B_{j-1}).
\end{equation*}
During the $j$-th step of the iteration, we will define the $c_T$ for $T\in\mathbb{T}_{B_j}\setminus (\mathbb{T}_{B_1}\cup\ldots\cup\mathbb{T}_{B_{j-1}})$ so that
\begin{equation*}
    \left|{\rm Re}\left(R^{\frac{-(n-1)}{2(n+1)}} \sum_{T\in\mathbb{T}_{B_j}} c_T e^{-2\pi i \langle\;\cdot\;,\; \xi_{\tau(T)}\rangle} \phi_T\right)\right|\gtrsim R^{\frac{n-1}{2(n+1)}}
\end{equation*}
on $B_j$ (ensuring that eventually $|F|\gtrsim R^{\frac{n-1}{2(n+1)}}$ on $B_j$ as well). Write
\begin{equation*}
    \mathbb{T}_{B_j}:=\mathbb{T}_{B_j}^1\sqcup \mathbb{T}_{B_j}^2,
\end{equation*}
where $\mathbb{T}_{B_j}^1:=\mathbb{T}_{B_j}\setminus(\mathbb{T}_{B_1}\cup\ldots\mathbb{T}_{B_{j-1}})$ (the set of tubes through $B_j$ for which we still need to define the $c_T$), while $\mathbb{T}_{B_j}^2$ consists of the tubes through $B_j$ for which the $c_T$ have already been defined. Importantly, $\mathbb{T}_{B_j}^1\supset\mathbb{T}_j$.

Let $\sigma_{B_j}$ be the sign of $F^2_j:={\rm Re}\left(\sum_{T\in\mathbb{T}_{B_j}^2}c_T e^{-2\pi i \langle\;\cdot\;,\; \xi_{\tau(T)}\rangle} \phi_T\right)$ on $B_j$ \footnote{Technically, this sign does not have to be uniform over all points of $B_j$; we can however choose the dominant sign over $B_j$, and eventually control the sum of the $F_\tau$ on a large subset of $B_j$. We omit this additional technicality from our exposition.}, and define
\begin{equation*}
    c_T:=\sigma_{B_j}e^{2\pi i \langle x_j,\xi_{\tau(T)}\rangle}\text{ for all }T\in\mathbb{T}_{B_j}^1,
\end{equation*}
where $x_j$ is the centre of $B_j$. As earlier,
\begin{equation*}
    \left|{\rm Re}\left(c_T e^{-2\pi i \langle \;\cdot\;,\; \xi_{\tau(T)}\rangle}\right)\right|\gtrsim 1\text{ on }B_j;
\end{equation*}
and, crucially, ${\rm Re}\left(c_T e^{-2\pi i \langle \;\cdot\;,\; \xi_{\tau(T)}\rangle}\right)$ also has sign $\sigma_{B_j}$ on $B_j$, for all $T\in\mathbb{T}^1_{B_j}$. Therefore, the functions $F_j^2$ and
\begin{equation*}
    F_j^1:={\rm Re}\left(R^{\frac{-(n-1)}{2(n+1)}} \sum_{T\in\mathbb{T}^1_{B_j}} c_T e^{-2\pi i \langle\;\cdot\;,\; \xi_{\tau(T)}\rangle} \phi_T\right)
\end{equation*}
have the same sign on $B_j$, so
\begin{eqnarray*}
\begin{aligned}
    \left|{\rm Re}\left(R^{\frac{-(n-1)}{2(n+1)}} \sum_{T\in\mathbb{T}_{B_j}} c_T e^{-2\pi i \langle\;\cdot\;,\; \xi_{\tau(T)}\rangle} \phi_T\right)\right|&=|F_j^1+F_j^2|\geq |F_j^1|\\
    &\gtrsim R^{\frac{-(n-1)}{2(n+1)}} \#\mathbb{T}_j\gtrsim R^{\frac{n-1}{2(n+1)}}
\end{aligned}
\end{eqnarray*}
on $B_j$, as desired. 

For all $T\in\mathbb{T}$ that do not contain any of the balls in $\mathcal{B}$, we define $c_T=1$.
By the end of the iteration, \eqref{eq: F large} holds. 
\end{proof}

\begin{proof}[Proof of Claim \ref{claim}.] Let $\mathcal{P}$ be a family of disjoint unit balls inside ${\rm supp}\;w$, with
\begin{equation*}
    \#\mathcal{P}\sim |{\rm supp}\;w|\sim R^{n-1}.
\end{equation*}
For each $B\in \mathcal{P}$, denote by $\mathbb{T}_B$ the set of tubes in $\mathbb{T}$ through $B$, and observe that $\#\mathbb{T}_B=\#\mathcal{D}$.

Write $\mathcal{P}=\{B_1,B_2,\ldots,B_N\}$. To prove the claim, we will show that there exist indices $k_1<k_2<\ldots<k_m$ such that:
\begin{itemize}
\item $m\gtrsim (\log R)^{-10} R^{n-1}$,
\item $B_{k_1}=B_1$, and for each $j=2,3,\ldots,m$, at least $\#\mathcal{D}/2$ tubes in $\mathbb{T}_{B_{k_j}}$ do not lie in $\mathbb{T}_{B_{k_1}}\cup\mathbb{T}_{B_{k_2}}\cup\ldots\cup\mathbb{T}_{B_{j-1}}$.
\end{itemize}
Indeed,
\begin{itemize}
    \item let $k_1:=1$,
    \item let $k_2$ be the smallest $j>k_1$ such that at most $\#\mathcal{D}/2$ tubes through $B_j$ contain $B_{k_1}$,
    \item let $k_3$ be the smallest $j>k_2$ such that at most $\#\mathcal{D}/2$ tubes through $B_j$ contain $B_{k_1}$ or $B_{k_2}$,
\end{itemize}
and so on, until no further $k_j$ as above exists. Let $\mathcal{P}^1$ be the set of balls $B_{k_j}$, over all the $k_j$ selected via the above process. To complete the proof of the claim, it will now be shown that
\begin{equation*}
    \#\mathcal{P}^1\gtrsim (\log R)^{-2} R^{n-1},
\end{equation*}
by studying the incidences between $\mathcal{P}$ and $\mathbb{T}$. For any $\mathcal{S}\subset \mathcal{P}$ and $\mathbb{L}\subset \mathbb{T}$, denote
\begin{equation*}
    I(\mathcal{S},\mathbb{L}):=\#\{(B,T)\in S\times\mathbb{L}:\;B\text{ is contained in }T\},
\end{equation*}
the number of incidences between $\mathcal{S}$ and $\mathbb{L}$.

Assume for contradiction that 
\begin{equation}\label{eq: implicit}
    \#\mathcal{P}^1\lesssim (\log R)^{-2} \#\mathcal{P}
\end{equation}
for an appropriately small implicit constant. Then, the set $\mathbb{T}^1$ of tubes in $\mathbb{T}$ that pass through balls in $\mathcal{P}^1$ is not too large; in particular,
\begin{equation*}
    \#\mathbb{T}^1\leq I(\mathcal{P}^1,\mathbb{T}^1)\leq \mathcal{P}^1\#\mathcal{D}\lesssim (\log R)^{-2} \#\mathcal{P}\#\mathcal{D}\sim (\log R)^{-2} I(\mathcal{P},\mathbb{T})\lesssim (\log R)^{-1}\#\mathbb{T},
\end{equation*}
for a small implicit constant. Therefore, the tubes in $\mathbb{T}^1$ only contribute a small fraction of the total incidences between $\mathbb{T}$ and $\mathcal{P}$:
\begin{equation*}
    I(\mathcal{P},\mathbb{T}^1)\lesssim \#\mathbb{T}^1\log R\lesssim \mathbb{T}\sim (\log R)^{-1}\#\mathcal{D}R^{n-1}\sim \#\mathcal{D}\#\mathcal{P}\leq \frac{1}{10}\;I(\mathcal{P},\mathbb{T})
\end{equation*}
(the implicit constant in \eqref{eq: implicit} is chosen so that this is true). 

This is a contradiction, as $\mathcal{P}^1$ was selected so that $\mathbb{T}^1\;(=\cup_{j=1}^m\mathbb{T}_{B_{k_j}})$ contributes at least half of the total incidences between $\mathbb{T}$ and $\mathcal{P}$. Indeed, each $B_i\in \mathcal{P}\setminus \mathcal{P}^1$ is incident to at least $\#\mathcal{D}/2$ tubes in $\cup_{k_j<i}\mathbb{T}_{B_{k_j}}\subset \mathbb{T}^1$; while each $B_i\in\mathcal{P}$ has all the $\#\mathcal{D}$ tubes in $\mathbb{T}$ through it in $\mathbb{T}^1$. Therefore,
\begin{equation*}
    I(\mathcal{P},\mathbb{T}^1)\geq  \#\mathcal{P}\#\mathcal{D}/2=\frac{1}{2}\;I(\mathcal{P},\mathbb{T}),
\end{equation*}
contradicting \eqref{eq: implicit}.
\end{proof}

\end{document}